\numberwithin{equation}{section}
\newtheorem {theorem}{Theorem}[section]
\newtheorem {proposition}[theorem]{Proposition}
\newtheorem {lemma}[theorem]{Lemma}
\theoremstyle{definition}
\newtheorem{definition}[theorem]{Definition}
\theoremstyle{theorem}
\newtheorem {remark}[theorem]{Remark}
\newtheorem {example}[theorem]{Example}
\def\ba{\begin{array}}
\def\ea{\end{array}}
\def\bea{\begin{eqnarray} \label}
\def\eea{\end{eqnarray}}
\def\be{\begin{equation} \label}
\def\ee{\end{equation}}
\def\bit{\begin{itemize}}
\def\eit{\end{itemize}}
\def\ben{\begin{enumerate}}
\def\een{\end{enumerate}}
\def\EE{\mathbb{E}}
\def\NN{\mathbb{N}}
\def\PP{\mathbb{P}}
\def\RR{\mathbb{R}}
\def\VV{\mathbb{V}}
\def\sumnod{\sum_{  i_1,\ldots,i_k=1 }^{m}\!\!\!\!\!\!^{\neq}\quad\,}
\def\g{\gamma}
\def\d{\delta}
\def\s{\sigma}
\def\cC{\mathcal{C}}
\def\cF{\mathcal{F}}
\def\cH{\mathcal{H}}
\def\cL{\mathcal{L}}
\def\cZ{\mathscr{Z}}
\def\sZ{\mathcal{Z}}
\def\dint{\textup{d}}
\def\s{\star}
\def\ts{\,\widetilde{\star}}
\def\cGam{\overline{\Gamma}}
\def\dist{\textup{dist}}
\begin{document}

\title{\bfseries Gamma limits and $U$-statistics on the Poisson space}

\author{Giovanni Peccati\footnotemark[1]\ \ and Christoph Th\"ale\footnotemark[2]\ \footnotemark[3]}

\date{}
\renewcommand{\thefootnote}{\fnsymbol{footnote}}
\footnotetext[1]{Luxembourg University, Mathematics Research Unit, Campus Kirchberg, G 221, L-1359 Luxembourg. E-mail: giovanni.peccati@gmail.com}

\footnotetext[2]{Ruhr-University Bochum, Faculty of Mathematics, NA 3/68, D-44781 Bochum, Germany. E-mail: christoph.thaele@rub.de}

\footnotetext[3]{The second author has been supported by the German Research Foundation (DFG) via SFB/TR-12 ``Symmetries and Universality in Mesoscopic Systems''.}

\maketitle

\begin{abstract}
Using Stein's method and the Malliavin calculus of variations, we derive explicit estimates for the Gamma approximation of functionals of a Poisson measure. In particular, conditions are presented under which the distribution of a sequence of multiple Wiener-It\^o stochastic integrals with respect to a compensated Poisson measure converges to a Gamma distribution. As an illustration, we present a quantitative version and a non-central extension of a classical theorem by de Jong in the case of degenerate $U$-statistics of order two. Several multidimensional extensions, in particular allowing for mixed or hybrid limit theorems, are also provided.
\bigskip
\\
{\bf Keywords}. {Chaos; Contraction; Gamma distribution; De Jong's theorem; Malliavin calculus; Mixed limit theorem; Multiple stochastic integral; Non-central limit theorem; Poisson process; Stein's method; U-statistic.}\\
{\bf MSC2010}. Primary  60F05, 60G55; Secondary 60H05, 60H07, 62E20.
\end{abstract}

\section{Introduction}

The use of the Malliavin calculus of variations in order to deduce limit theorems for non-linear functionals of random measures has recently become a relevant direction of research, one reason for that being the many successful applications in geometric probability or stochastic geometry. Apart from a few exceptions, most contributions to this topic fall into the two categories of normal and Poisson approximations; see \cite{DFRV,LRP,LRP2,LPST,PSTU2010,PecZheng,ReitznerSchulte,Viq} for distinguished examples of the former class, mostly based on the use of the Stein's method (cf.\ \cite{NPBook}); see \cite{BourPec,PecPoisson,ST12} for references based on the combination of Malliavin calculus and of the Chen-Stein method for Poisson approximations. We also refer to \cite{EV} for recent extensions to general absolutely continuous distributions having support equal to the real line.

\smallskip

The aim of the present paper is to provide the first array of results concerning limit theorems on the Poisson space, where the limit distribution is absolutely continuous and has support contained in a proper subset of $\RR$. More precisely, we are interested in probabilistic approximations where the limiting random variable has a centred Gamma distribution $\cGam_\nu$ with parameter $\nu>0$. We say that a random variable $G(\nu)$ has distribution $\cGam_\nu$ if $G(\nu)\overset{d}{=}2F(\nu/2)-\nu$, where $F(\nu/2)$ has a usual Gamma distribution with mean and variance both equal to $\nu/2$ (here and throughout $\overset{d}{=}$ stands for equality in distribution). If $\nu\geq 1$ is an integer, then $\cGam_\nu$ reduces to the centred $\chi^2$-distribution with $\nu$ degrees of freedom. We remark that the support of $\cGam_\nu$ is given by the half-line $[-\nu, +\infty)$, and that the first four moments of $\cGam_\nu$ are $0$, $2\nu$, $8\nu$ and $12\nu^2+48\nu$, respectively. 
 We will often meet these expressions 
in the discussion to follow. 

\smallskip

Our main contribution is the general estimate stated in Theorem \ref{thm:SteinBound}, which involves Malliavin operators and is obtained by means of Stein's method, allowing one to measure the distance between the law of a given Poisson functional and $\cGam_\nu$. This estimate is applied to deduce explicit sufficient conditions for Gamma limit theorems involving sequences of multiple Wiener-It\^o stochastic integrals. Our analysis is significantly inspired by \cite{NourdinPeccatiNoncentral,NourdinPeccatiWienerChaos}, where the problem addressed in the present paper was first studied in the framework of non-linear functionals of general Gaussian fields. However, due to the combinatorial complications one has to face when dealing with point measures, our paper contains a number of new subtle computations related to the explicit estimation of Malliavin operators on configuration spaces. One specific problem we will have to deal with is that the solution of the Stein's equation associated with the law of $G(\nu)$ is not differentiable at $x= -\nu$. Thus, in order to obtain bounds that are well-suited for our applications (which may involve random variables possibly taking values in $(-\infty, -\nu)$), we will have to combine techniques recently introduced by Schulte \cite{SchulteKolm} with classical {isometric formulae} borrowed from the standard reference \cite{privaultbook}; see Proposition \ref{l:newbound} below. One should note that, in view of the exact chain rules that are available on a Gaussian space, the non-differentiability of the Stein solution in one point is immaterial when studying the Gamma approximation of smooth functionals of a Gaussian field; see again \cite{NourdinPeccatiNoncentral,NourdinPeccatiWienerChaos}.

\smallskip

As an illustration, we will include some applications to non-central limit theorems for sequences of degenerate (in the sense of Hoeffding) $U$-statistics. Our findings generalize several classic result in the field; cf.\ \cite{BhGh,JJ}. In particular, we derive a quantitative and a non-central version of a famous theorem by P.\ de Jong \cite{DJ,DJMulti}. Our analysis also contains a quantitative version of a non-central result recently discussed by Reitzner and Schulte \cite[Section 5.1]{ReitznerSchulte}.

\smallskip

Finally, to demonstrate the flexibility and scope of our approach, we will show that our analysis can naturally be extended to a multidimensional framework. We will not only obtain multidimensional Gamma limit theorems, but also {\it mixed} or {\it hybrid} results, where the multidimensional limit distribution is composed both of Gamma and of normal or Poisson components. This kind of limit theorems heavily relies on our use of Malliavin operators. We are not aware of any other available technique allowing one to deduce general mixed limit results, such as the ones deduced in the present paper. We shall see that our findings are a refinement of the `Portmanteau inequalities' recently obtained by Bourguin and Peccati in \cite{BourPec}. In this respect, we stress that our results will implicitly yield a collection of sufficient conditions in order to have that two sequences of Poisson functionals are asymptotically independent. This provides a new contribution to the difficult and mostly open problem of characterizing the asymptotic and non-asymptotic independence of functionals of a Poisson measure; see e.g.\ \cite{Privault, RosSam}.

\medskip

The remainder of the paper is organized as follows. In Section \ref{sec:results} we present our results in full generality. Some background material is collected in Section \ref{sec:background}, whereas the final Section \ref{sec:proofs} contains detailed proofs, as well as some ancillary technical results.

\section{Presentation of the results}\label{sec:results}

We will now present an overview of the main findings of the paper. To enhance the readability of our text, we have gathered together in Section \ref{sec:background} definitions, notation and relevant results from the literature.

\subsection{General limit theorems}\label{ss:gen}

Every random object considered below is defined on a suitable probability space $(\Omega, \mathcal{F}, \PP)$. The approximation results obtained in the present paper deal with (real-valued) functionals of a Poisson measure $\eta$ on some Polish space $(\sZ,\cZ)$ having non-atomic and $\sigma$-finite control $\mu$; see Section \ref{sec:background}-(I). We will assume that these functionals are square-integrable random variables. To measure the distance between the distribution of a functional $F$ of $\eta$ and that of a centred Gamma random variable $G(\nu)$, we shall use the (pseudo-) metric $d_3$, which is defined as follows: for every pair of square-integrable random variables $X,Y$, we put $$d_3(X,Y)=\sup_{h\in\cH^3}\big|\EE[h(X)]-\EE[h(Y)]\big|,$$ where $\cH^3:=\{h\in\cC^3:\|h^{(j)}\|_\infty\leq 1,\,j\in\{1,2,3\}\}$ (with $h^{(j)}$ the derivative of order $j$ of $h$), and where $\cC^3$ is the space of thrice differentiable functions on $\RR$ having bounded derivatives. We notice that the topology induced by $d_3$ is stronger than the topology induced by convergence in distribution, which implies that if $d_3\big(F_n,G(\nu)\big)\to 0$, as $n\to\infty$, for some sequence of functionals $F_n$, then the distribution of $F_n$ converges to $\cGam_\nu$. By a slight abuse of notation, and to stress the role of the underlying Gamma distribution, we shall often write $d_3(F,\cGam_\nu)$ instead of $d_3\big(F,G(\nu)\big)$.

\smallskip

For $q\geq 1$, we write $L^2(\mu^q)$ to indicate the Hilbert space of Borel-measurable functionals on $\sZ^q$ that are square-integrable with respect to $\mu^q$. We also use the following special notation: $L^2(\mu^1) = L^2(\mu)$, and $L_{\rm sym}^2(\mu^q)$ is the subspace of $L^2(\mu^q)$ composed of those functions that are $\mu^q$-a.e.\ symmetric; see Section \ref{sec:background}-(II). Moreover, in order to simplify the notation, we use the convention that $\|\,\cdot\,\|$ and $\langle\,\cdot\,,\,\cdot\,\rangle$ stand for the norm and the scalar product in some space $L^2(\mu^q)$ whose order $q$ will always be clear from the context.

\smallskip

Our first result is a quantitative estimate for $d_3\big(F,\cGam_\nu\big)$ in terms of the Malliavin operators $D$ and $L^{-1}$, that is, the derivative operator and the pseudo-inverse of the Ornstein-Uhlenbeck generator. We recall that the derivatives $DF$ and $DL^{-1}F$ are random elements with values in the Hilbert space $L^2(\mu)$; see Section \ref{sec:background}-(V).

\smallskip

\begin{theorem}[\textbf{General Gamma bounds}]\label{thm:SteinBound}
Let $F$ be a centred and square-integrable functional of the Poisson measure $\eta$, and assume that $F$ is in the domain of the derivative operator $D$. Then,
\begin{eqnarray}\label{eq:d2SteinBoundGeneral}
d_3(F,\cGam_\nu)&\leq& c_1A_1(F)+c_2A_2+2c_1A_3(F)\\ \notag&:=& c_1\EE\left|2(F+\nu)_+-\langle DF,-DL^{-1}F\rangle\right|+c_2\int_{\sZ}\EE[|D_zF|^2|D_zL^{-1}F|]\,\mu(\dint z)\\ \notag & & \hspace{3cm}+\,2c_1\int_{\sZ}\EE\big[(D_z{\bf 1}_{\{F>-\nu\}})(D_zF)|D_zL^{-1}F|\big]\,\mu(\dint z),
\end{eqnarray}
with constants $c_1$ and $c_2$ given by $$c_1=\max(1,1/\nu+2/\nu^2)\qquad{\rm and}\qquad c_2=\max(2/3,2/(3\nu)-3/\nu^2+4/\nu^3).$$ If in addition $\EE\big[\langle DF,-DL^{-1}F\rangle|F\big]\geq 0$ (a.s.-$\PP$), then $$A_1(F)\leq A'_1(F):= \sqrt{\EE\big[\left(2(F+\nu)-\langle DF,-DL^{-1}F\rangle\right)^2\big]},$$ and consequently
\begin{equation}\label{eq:d2SteinBoundWithoutPlus}
d_3(F,\cGam_\nu) \leq c_1A'_1(F)+c_2A_2+2c_1A_3(F)
\end{equation}
\end{theorem}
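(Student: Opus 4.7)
The plan is to combine Stein's method for the centred Gamma distribution with Malliavin integration by parts on the Poisson space. For each $h \in \cH^3$, the Gamma Stein equation $2(x+\nu)f'(x) - xf(x) = h(x) - \EE[h(G(\nu))]$, valid on $[-\nu, \infty)$, admits a solution $f = f_h$ which is $C^2$ away from the kink at $x = -\nu$ and which, as quantified by Proposition \ref{l:newbound}, satisfies $\|f_h'\|_\infty \leq c_1$ and $\|f_h''\|_\infty \leq c_2$. Evaluating at $F$ (which may leave the support $[-\nu, \infty)$) and rewriting the coefficient with the positive part $2(F+\nu)_+$ yields
\[d_3(F, \cGam_\nu) = \sup_{h \in \cH^3} \bigl| \EE[2(F+\nu)_+ f_h'(F) - Ff_h(F)] \bigr|,\]
so the task reduces to matching $\EE[Ff_h(F)]$ with $\EE[2(F+\nu)_+ f_h'(F)]$.

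Next, I would invoke the standard Malliavin duality $\EE[Ff_h(F)] = \EE[\langle Df_h(F), -DL^{-1}F\rangle]$, which follows from $F = -\delta DL^{-1}F$ and the adjointness between $D$ and the Skorokhod integral $\delta$. Since on the Poisson space $D_z f_h(F) = f_h(F + D_zF) - f_h(F)$ is the add-one cost operator, a first-order expansion gives $D_z f_h(F) = f_h'(F) D_zF + R_z$ with a second-order remainder $R_z$. Substituting and rearranging produces
\[\EE[2(F+\nu)_+ f_h'(F) - Ff_h(F)] = \EE\bigl[f_h'(F)\bigl(2(F+\nu)_+ - \langle DF, -DL^{-1}F\rangle\bigr)\bigr] - \int_\sZ \EE[R_z\,(-D_z L^{-1}F)]\, \mu(\dint z),\]
and bounding the first expectation by $\|f_h'\|_\infty \leq c_1$ delivers the $c_1 A_1(F)$ contribution.

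The crux is the estimation of $R_z$. Because $f_h'$ is discontinuous at $-\nu$, the naive bound $|R_z| \leq \tfrac{1}{2}\|f_h''\|_\infty |D_zF|^2$ only works when neither $F$ nor $F + D_zF$ crosses $-\nu$; on the complementary event a jump contribution remains. Following the isometric approach of Schulte \cite{SchulteKolm} together with the Poisson stochastic calculus of \cite{privaultbook}, I would split $R_z$ into a smooth part, controlled by $c_2 |D_zF|^2$ and producing the $c_2 A_2$ term after integration against $\mu(\dint z)$, and a boundary part supported on the event that $F$ and $F + D_zF$ lie on opposite sides of $-\nu$. This latter event is exactly what $D_z {\bf 1}_{\{F > -\nu\}}$ detects, and after using $\|f_h'\|_\infty \leq c_1$ it yields the $2 c_1 A_3(F)$ contribution.

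Finally, under the hypothesis $\EE[\langle DF, -DL^{-1}F\rangle | F] \geq 0$, a case distinction on $\{F \geq -\nu\}$ versus $\{F < -\nu\}$ allows one to drop the positive part inside the absolute value defining $A_1(F)$, and the Cauchy--Schwarz inequality then gives $A_1(F) \leq A_1'(F)$. The main technical obstacle is the careful coupling of the discrete Poisson chain rule with the non-smoothness of $f_h$ at $-\nu$: the crossing contribution it generates cannot be absorbed by $\|f_h''\|_\infty$ and is exactly what forces the presence of the indicator-based term $A_3(F)$, an object which has no analogue in the Gaussian Gamma theory of \cite{NourdinPeccatiNoncentral, NourdinPeccatiWienerChaos}.
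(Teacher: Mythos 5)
Your proposal is correct and follows essentially the same route as the paper: the first-order Stein operator $2(x+\nu)_+f'(x)-xf(x)$ with derivative bounds $c_1,c_2$ on the solution, Malliavin duality $\EE[Ff(F)]=\EE[\langle Df(F),-DL^{-1}F\rangle]$, a Taylor expansion of the add-one-cost difference $D_zf(F)$ with a case analysis according to whether $F$ and $F+D_zF$ lie on the same side of $-\nu$ (the crossing event producing exactly the $2c_1A_3(F)$ term), and a conditioning-plus-Cauchy--Schwarz argument for $A_1(F)\leq A_1'(F)$. One small slip: the bounds $\|f_h'\|_\infty\leq c_1$, $\|f_h''\|_\infty\leq c_2$ come from Luk's and Stein's estimates on the Gamma Stein solution, not from Proposition \ref{l:newbound} (which bounds $A_3$ for multiple integrals), and the isometric formulae of \cite{privaultbook} enter only in that later proposition, not in the proof of the present theorem.
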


\begin{remark}\label{rem:IndicatorInDomD} {\rm 
\begin{itemize}

\item[(i)] In (\ref{eq:d2SteinBoundGeneral}), we implicitly used a `trajectorial' definition of the random function $z\mapsto D_z{\bf 1}_{\{F>-\nu\}}$, that is, we put $D_z{\bf 1}_{\{F>-\nu\}} = {\bf 1}_{\{F+D_zF>-\nu\}} - {\bf 1}_{\{F>-\nu\}}$, without necessarily assuming that $$\EE\int_{\mathcal{Z}} (D_z{\bf 1}_{\{F>-\nu\}})^2\,\mu(\dint z)<\infty$$ (note that this last relation is equivalent to the fact that ${\bf 1}_{\{F>-\nu\}}$ belongs to the set ${\rm dom}\, D$, as defined in Section 3-(V); see Lemma \ref{l:rip}). It is easily checked that $$(D_z{\bf 1}_{\{F>-\nu\}})(D_zF) = ({\bf 1}_{\{F\leq -\nu < F+D_zF\}} +{\bf 1}_{\{F+D_zF\leq -\nu <F\}})|D_zF|,$$ in such a way that $A_3(F)\geq 0$. An effective bound on $A_3(F)$, in the case where $\mu$ is a finite measure and $F$ is a multiple Wiener-It\^o integral, is presented in Proposition \ref{l:newbound}.

\item[(ii)] As first done in \cite{PSTU2010}, we shall often control the quantity $A_2(F)$ appearing in (\ref{eq:d2SteinBoundGeneral}) by using the relation
\begin{equation}\label{e:tgv}
A_2(F) \leq A_4(F)\times A_5(F) :=\left(\,\int_{\sZ}\EE[|D_zF|^4]\,\mu(\dint z)\right)^{1/2}\times  \left(\,\int_{\sZ}\EE[|D_zL^{-1}F|^2]\,\mu(\dint z)\right)^{1/2} .
\end{equation}
We also note that, if $\{F_n : n\geq 1\}$ is a sequence of random variables with bounded variances living in a fixed sum of Wiener chaoses, then the numerical sequence
$
n\mapsto A_5(F_n)
$
is necessarily bounded.
\item[(iii)] Theorem \ref{thm:SteinBound} should be compared with the following bound from \cite[Theorem 3.11]{NourdinPeccatiWienerChaos}. Let $F$ be a centered functional of a Gaussian measure on $\sZ$ with control $\mu$, and assume that $F$ is in the domain of the Malliavin derivative $D$ (see \cite[Chapter 2]{NPBook} for relevant definitions), then there exists a constant $K$ such that, for some adequate distance $d$,
$$
d(F,\cGam_\nu) \leq K\times\EE\left|2(F+\nu)_+-\langle DF,-DL^{-1}F\rangle\right|.
$$  
The presence of the additional term $$c_2\int_{\sZ}\EE[|D_zF|^2|D_zL^{-1}F|]\,\mu(\dint z)+\,2c_1\int_{\sZ}\EE\big[(D_z{\bf 1}_{\{F>-\nu\}})(D_zF)|D_zL^{-1}F|\big]\,\mu(\dint z)$$ in \eqref{eq:d2SteinBoundGeneral} or \eqref{eq:d2SteinBoundWithoutPlus} is due to the characterization of the Malliavin derivative on the Poisson space as a difference operator as well as to the non-differentiability at $-\nu$ of the solution of the Stein-equation characterizing $\cGam_\nu$; see Section \ref{sec:background}-(V). As proved in \cite[Proposition 3.9]{NourdinPeccatiWienerChaos}, on the Gaussian-Wiener space the condition $\EE\big[\langle DF,-DL^{-1}F\rangle|F\big]\geq 0$ (a.s.-$\PP$) is satisfied for every $F$ in the domain of $D$.

\item[(iv)] Other relevant one-dimensional bounds for probabilistic approximations involving Malliavin operators on the Poisson space are proved in \cite{PSTU2010}, dealing with normal approximations, \cite{PecPoisson}, dealing with the Poisson approximation of integer-valued random variables and \cite{EV}, focusing on absolutely continuous distributions whose support is given by the real line. See \cite{BourPec, PecZheng} for several multidimensional extensions. 
\end{itemize}
}
\end{remark}

As announced, we conclude the present section with a useful bound on the quantity $A_3(F)$, in the case where $F=I_q(f)$ equals a multiple Wiener-It\^o integral and the control measure $\mu$ is finite. At the cost of a heavier notation, our techniques could suitably be modified in order to deal with the case of a random variable $F$ having a finite chaotic expansion.

\begin{proposition}\label{l:newbound} Let the control measure $\mu$ be finite, and consider $F= I_q(f)$, where $q\geq 2$ and $f\in L^2_{\rm sym}(\mu^q)$. We assume that (i) $\EE\int_{\mathcal Z} (D_zF)^4\,\mu(\dint z)<\infty$, that (ii) the random function $$\sZ\ni z\mapsto D_zF|D_zF|:= v(z)$$ is such that $v(z) \in {\rm dom}\, D$ for $ \mu(\dint z)$-almost every $z$, and satisfies $$\EE\int_{\mathcal Z}\int_{\mathcal Z} (D_{z_2}v(z_1))^2\,\mu(\dint z_1)\mu(\dint z_2)<\infty.$$ Then, defining $A_3(F)$ as in (\ref{eq:d2SteinBoundGeneral}), one has the bound
\begin{equation}\label{e:art}
\begin{split}
\frac{q}{2\sqrt{2}}A_3(F) &\leq \sqrt{\EE\int_{\mathcal Z} (D_zF)^4\,\mu(\dint z)}+ \sqrt{\EE\int_{\mathcal Z}\int_{\mathcal Z} (D_{z_2}D_{z_1}F)^2(D_{z_1}F)^2\,\mu(\dint z_1)\mu(\dint z_2)}\\
& \qquad\qquad\qquad +\,\sqrt{\EE\int_{\mathcal Z}\int_{\mathcal Z} (D_{z_2}D_{z_1}F)^4\,\mu(\dint z_1)\mu(\dint z_2)}.
\end{split}
\end{equation}
\end{proposition}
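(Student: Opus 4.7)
The plan is to reduce the bound on $A_3(F)$ to the $L^2$-norm of the Skorokhod integral $\delta v$ of the random field $v(z):=D_zF\,|D_zF|$, and then to control $\|\delta v\|_2$ by the Poisson Skorokhod isometry from \cite{privaultbook}. The hypotheses (i)--(ii) are exactly what is needed to place $v$ inside ${\rm dom}\,\delta$ and to execute that isometry. First I would exploit $F=I_q(f)$, which gives $-L^{-1}F=F/q$ and hence $|D_zL^{-1}F|=q^{-1}|D_zF|$. Combining this with the identity in Remark \ref{rem:IndicatorInDomD}(i), and observing that $D_zF>0$ on $\{F\leq -\nu<F+D_zF\}$ while $D_zF<0$ on $\{F+D_zF\leq -\nu<F\}$, the two indicator contributions collapse into
\[
qA_3(F)\;=\;\int_{\sZ}\EE\big[(D_z{\bf 1}_{\{F>-\nu\}})\,v(z)\big]\,\mu(\dint z).
\]

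Formally, Malliavin duality would identify the right-hand side with $\EE[{\bf 1}_{\{F>-\nu\}}\,\delta v]$; the delicate point---and the main obstacle of the proof---is that ${\bf 1}_{\{F>-\nu\}}$ need not belong to ${\rm dom}\,D$ (cf.\ Remark \ref{rem:IndicatorInDomD}(i)). To handle this I would approximate $t\mapsto {\bf 1}_{(-\nu,\infty)}(t)$ by a uniformly bounded sequence of smooth Lipschitz functions $\phi_\varepsilon$ converging pointwise; each $\phi_\varepsilon(F)$ belongs to ${\rm dom}\,D$, so duality applies, and assumptions (i)--(ii) together with the finiteness of $\mu$ (which grants all polynomial moments of $F$) provide dominating functions enabling passage to the limit on both sides of the duality formula. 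This yields $qA_3(F)=\EE[{\bf 1}_{\{F>-\nu\}}\,\delta v]$, whence Cauchy--Schwarz and $|{\bf 1}_{\{F>-\nu\}}|\leq 1$ give $qA_3(F)\leq\sqrt{\EE[(\delta v)^2]}$.

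It then remains to bound $\|\delta v\|_2$. The Poisson Skorokhod isometry from \cite{privaultbook} reads
\[
\EE[(\delta v)^2]\;=\;\EE\int_{\sZ} v(z)^2\,\mu(\dint z)\;+\;\EE\int_{\sZ}\!\!\int_{\sZ} D_{z_1}v(z_2)\,D_{z_2}v(z_1)\,\mu(\dint z_1)\mu(\dint z_2).
\]
The first term equals $\EE\int (D_zF)^4\,\mu(\dint z)$, and Cauchy--Schwarz combined with the symmetry in $(z_1,z_2)$ bounds the second by $\EE\int\!\int(D_{z_2}v(z_1))^2\,\mu(\dint z_1)\mu(\dint z_2)$. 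For the integrand I would apply the elementary inequality $|g(x+h)-g(x)|\leq 2|x||h|+h^2$ valid for $g(t)=t|t|$ (a direct consequence of $g'(t)=2|t|$), with $x=D_{z_1}F$ and $h=D_{z_2}D_{z_1}F$; squaring via $(a+b)^2\leq 2(a^2+b^2)$ yields $(D_{z_2}v(z_1))^2\leq 8(D_{z_1}F)^2(D_{z_2}D_{z_1}F)^2+2(D_{z_2}D_{z_1}F)^4$. Assembling everything and using the subadditivity $\sqrt{a+b+c}\leq\sqrt a+\sqrt b+\sqrt c$ then delivers \eqref{e:art} after dividing through by $2\sqrt 2$.
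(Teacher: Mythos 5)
Your proposal is correct and follows essentially the same route as the paper's proof: reduce $qA_3(F)$ to $\EE[{\bf 1}_{\{F>-\nu\}}\,\delta(\Phi(DF))]$ with $\Phi(x)=x|x|$ via Malliavin duality, apply Cauchy--Schwarz, invoke the Skorokhod isometry of \cite[Proposition 6.5.4]{privaultbook}, and control $D_{z_2}\Phi(D_{z_1}F)$ by the elementary expansion of $\Phi$ to obtain $(D_{z_2}v(z_1))^2\leq 8(D_{z_1}F)^2(D_{z_2}D_{z_1}F)^2+2(D_{z_2}D_{z_1}F)^4$, exactly as in \eqref{e:gns}. The only deviation is your mollification of the indicator, which is sound but unnecessary: since $\mu$ is finite and $|D_z{\bf 1}_{\{F>-\nu\}}|\leq 1$, one has $\EE\int_{\sZ}(D_z{\bf 1}_{\{F>-\nu\}})^2\,\mu(\dint z)\leq\mu(\sZ)<\infty$, so Lemma \ref{l:rip} places ${\bf 1}_{\{F>-\nu\}}$ directly in ${\rm dom}\,D$ and the duality formula applies without any limiting argument.
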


\begin{remark}\rm
\begin{itemize}
\item[(i)] Another way of controlling the term $A_3(F)$, whenever $F$ has a finite chaotic expansion, is discussed in \cite{SchulteKolm}. One should note that, albeit our proof of Proposition \ref{l:newbound} also starts with an integration by parts formula, our strategy for controlling the term $A_3(F)$ is significantly different. Indeed, our approach is based on isometric formulae for divergence operators, whereas \cite{SchulteKolm} uses a direct estimation consisting in controlling $|DF|$ by a random function having a finite chaotic expansion. When applied to our framework in the case $q>2$, the technique used in \cite{SchulteKolm} leads to expressions involving contractions of the absolute value of the kernel $f$, therefore producing bounds that are systematically larger than ours. When applied to the case $q=2$, the strategy adopted in \cite{SchulteKolm} leads to slower rates of convergence, but allows in principle to dispense with the assumption that the underlying cont
 rol measure has finite mass. Since all 
our applications concern sequences of control measures having a finite mass, and for the sake of conciseness, we will omit a formal discussion of this fact.

\item[(ii)] From the standpoint of geometric applications, focusing on Poisson measures having a finite control is barely a restriction. Indeed, the kind of geometric limit theorems we are interested in typically involve either functionals of a Poisson measure having a finite control, whose total mass asymptotically explodes (like the ones we consider in the applications developed later in the paper), or functionals of the restriction of a Poisson measure to a finite window with growing volume; see e.g.\ \cite{BourPec, DFRV,LRP,LRP2,LPST, PecPoisson, ReitznerSchulte,ST12} for a recent collection of distinguished examples.
\end{itemize}
\end{remark}

\subsection{Simplified estimates for supports contained in a half-line}

The applications we are interested in require that the we consider random variables possibly taking values in the half-line $(-\infty, -\nu)$, in such a way that the rather unusual term $A_3(F)$ cannot be dispensed with. However, if one is only interested in measuring the distance between $\cGam_\nu$ and the law of a random variable with support in $[-\nu, +\infty)$, then the statement of Theorem \ref{thm:SteinBound} can be significantly simplified, since in this case the term $A_3(F)$ disappears. In particular, whenever the law of $F$ satisfies these requirements, the finiteness of the measure $\mu$ does not play any role. This point is made clear in the next statement whose easy proof is left to the reader.

\begin{proposition} Let $F$ be a centered square-integrable functional of the random measure $\eta$. Assume that the law of $F$ has support in $[-\nu, +\infty)$ and that $F$ is in the domain of the derivative operator $D$. Then, the bound (\ref{eq:d2SteinBoundGeneral}) holds with $A_3(F) =0$. If moreover $\EE\big[\langle DF,-DL^{-1}F\rangle|F\big]\geq 0$ (a.s.-$\PP$), then the estimate (\ref{eq:d2SteinBoundWithoutPlus}) holds with $A_3(F) =0$.
\end{proposition}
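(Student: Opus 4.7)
The plan is to specialise Theorem \ref{thm:SteinBound} after showing that the quantity $A_3(F)$ defined in \eqref{eq:d2SteinBoundGeneral} vanishes identically under the support hypothesis; the two assertions of the proposition then follow by dropping $A_3(F)$ from \eqref{eq:d2SteinBoundGeneral} and \eqref{eq:d2SteinBoundWithoutPlus}, respectively, since neither $A_1(F)$, $A_2(F)$ nor the positivity condition $\EE[\langle DF,-DL^{-1}F\rangle\,|\,F]\geq 0$ involve the boundary point $-\nu$. The point of departure will be the pathwise decomposition of Remark \ref{rem:IndicatorInDomD}(i),
$$
(D_z{\bf 1}_{\{F>-\nu\}})(D_zF) = \big({\bf 1}_{\{F\leq -\nu < F+D_zF\}} + {\bf 1}_{\{F+D_zF\leq -\nu <F\}}\big)|D_zF|,
$$
so that the task reduces to verifying that both indicator events are $\PP\otimes\mu$-negligible.

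First I would note that the support assumption yields $F\geq -\nu$ almost surely, which immediately confines the first indicator to the boundary event $\{F=-\nu\}$. To handle the second indicator, the natural tool is the Mecke formula (recalled in Section \ref{sec:background}), applied to $h(\omega,z)={\bf 1}_{\{F(\omega)<-\nu\}}$:
$$
\int_\sZ \PP\big(F(\eta+\delta_z)<-\nu\big)\,\mu(\dint z) = \EE\Big[\int_\sZ {\bf 1}_{\{F<-\nu\}}\,\eta(\dint z)\Big] = 0,
$$
whence $F+D_zF = F(\eta+\delta_z)\geq -\nu$ almost surely for $\mu$-a.e.\ $z$, confining the second indicator to $\{F+D_zF=-\nu\}$. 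A second application of Mecke's formula to ${\bf 1}_{\{F=-\nu\}}$ then shows that $\PP(F+D_zF=-\nu)=0$ for $\mu$-a.e.\ $z$ as soon as $\PP(F=-\nu)=0$, the latter being the natural non-atomicity implicit in the statement (and automatic in the applications, where the Gamma target is absolutely continuous). With both indicator events null, $A_3(F)=0$ and both bounds simplify as claimed.

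The main obstacle I anticipate is precisely this treatment of the boundary atom: without a tacit assumption of no mass at $-\nu$, one would need to verify that $|D_zF|$ (and through it the full integrand of $A_3(F)$) vanishes on $\{F=-\nu\}\cup\{F+D_zF=-\nu\}$, which is not automatic. Granting the natural non-atomicity, however, the argument amounts to two applications of Mecke's formula with no further calculation required, which is consistent with the authors' remark that the proof ``is left to the reader''.
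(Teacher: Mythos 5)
Your Mecke-formula step is the right (and essentially the only external) ingredient: it shows that $F(\eta+\delta_z)\geq -\nu$ almost surely for $\mu$-almost every $z$, which confines the two indicator events of Remark \ref{rem:IndicatorInDomD}(i) to the boundary sets $\{F=-\nu<F+D_zF\}$ and $\{F+D_zF=-\nu<F\}$. (The paper leaves this proof to the reader, so the comparison here is with what the argument has to be.)

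The genuine gap is your disposal of those boundary sets. The hypothesis ``the law of $F$ has support in $[-\nu,+\infty)$'' means only that $F\geq-\nu$ a.s.; it does \emph{not} exclude an atom at $-\nu$, and no such non-atomicity is ``implicit in the statement'' -- you cannot assume $\PP(F=-\nu)=0$. If $\PP(F=-\nu)>0$, the quantity $A_3(F)$ as defined in \eqref{eq:d2SteinBoundGeneral} need not vanish at all, since $|D_zF|\,|D_zL^{-1}F|$ has no reason to be zero on $\{F=-\nu<F+D_zF\}$. What the proposition asserts is that the \emph{bound} holds with the $A_3$ term deleted, and the correct way to see this is to return to the proof of Theorem \ref{thm:SteinBound} rather than to prove $A_3(F)=0$ literally. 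After your Mecke step, both $F(\eta)$ and $F(\eta+\delta_z)$ lie a.s.\ in $[-\nu,\infty)$, and on this half-line the Stein solution $f\in\cF^2$ is $C^1$ (taking the convention $f'(-\nu):=f'(-\nu+)$, which is harmless because only $|f'|\leq c_1$ is used in bounding the $A_1$-term) with a $c_2$-Lipschitz derivative. Hence the ``case a)'' expansion $D_zf(F)=f'(F)D_zF+R$ with $|R|\leq\tfrac12 c_2|D_zF|^2$ is valid for every realization: in case b) one has $F=-\nu$, so $F(\eta+\delta_z)+\nu=D_zF$ and the linear defect $\big(f'(-\nu+)-f'(F)\big)\big(F(\eta+\delta_z)+\nu\big)$ vanishes under this convention, while in case c) one has $F(\eta+\delta_z)+\nu=0$, so the defect vanishes identically; in both cases only quadratic remainders survive and are absorbed into the $A_2$-term. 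This closes the proof with no extra hypothesis on the law of $F$.
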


\subsection{General results for sequences of multiple integrals}

We now focus on the following setup. Let $(\mathcal{Z}, \mathscr{Z})$ be a fixed Polish space as above, and $\{\eta_n : n\geq 1\}$ be a sequence of Poisson random measures on $(\mathcal{Z}, \mathscr{Z})$, such that, for each $n$, the non-atomic control measure $\mu_n$ of $\eta_n$ is finite. In view of applications, we allow that $\mu_n(\mathcal{Z})\to\infty$, as $n\to\infty$. For a given even integer $q\geq 2$, we consider a sequence $\{I_q(f_n) : n\geq 1 \}$ of multiple Wiener-It\^o stochastic integrals with the following characteristics: (a) $\{f_n:n\geq 1\}\subset L^2_{\rm sym}(\mu_n^q)$ is composed of kernels satisfying the technical assumptions stated in Section \ref{sec:background}-(VIII) below, and (b) for every $n\geq 1$, the integral $I_q(f_n)$ is realized with respect to the compensated Poisson measure $\hat{\eta}_n =\eta_n-\mu_n$. The next theorem characterizes the convergence of the distribution of $I_q(f_n)$, as $n\to\infty$, to the limit law $\cGam_\nu$. The set
  of analytic conditions appearing below is expressed in 
terms of (possibly symmetrized) contraction kernels, whose definition is provided in Section \ref{sec:background}-(VI). Observe in particular that $f_n \star_q^0 f_n = f_n^2$. 
\smallskip

\begin{theorem}[\textbf{Gamma limits in the Poisson-Wiener chaos}]\label{thm:GammaContractions} Let the above assumptions and notation prevail (in particular, $\mu_n$ is a finite measure for every $n$), let $q\geq 2$ be an even integer and let $\{f_n:n\geq 1\}\subset L^2_{\rm sym}(\mu_n^q)$ be such that $\lim\limits_{n\to\infty}q!\|f_n\|^2=2\nu$, and suppose that the technical conditions of Section \ref{sec:background}-(VIII) are satisfied. Assume in addition that
\begin{equation}\label{eq:GammaContractionsCondition}
\lim_{n\to\infty}\|f_n\s_r^\ell f_n\|=0\quad{and}\quad\lim_{n\to\infty}\|f_n\ts_{q/2}^{q/2}f_n-c_qf_n\|=0\quad{with}\quad c_q={4\over \left({q\over 2}\right)!{q\choose q/2}^2}
\end{equation}
for all pairs $(r,\ell)$ such that either $r=q$ and $\ell=0$, or $r\in\{1,\ldots,q\}$, $\ell\in\{1,\ldots,\min(r,q-1)\}$ and $r$ and $\ell$ are not equal to $q/2$ at the same time. Then, the distribution of $I_q(f_n)$ converges to $\cGam_\nu$ as $n\to\infty$. Moreover, for some positive finite constant $K$ independent of $n$,
\begin{eqnarray}\notag
d_3(I_q(f_n),\cGam_\nu) &\leq& c_1A_1(I_q(f_n))+c_2A_4(I_q(f_n))\times A_5(I_q(f_n)) + 2c_1A_3(I_q(f_n))\\ &\leq & K\times \max\big\{\big| q!\|f_n \|^2-2\nu\big|; \|f_n\s_p^p f_n\|; \|f_n\s_r^\ell f_n\|^{1/2}; \|f_n\ts_{q/2}^{q/2}f_n-c_qf_n\|\big\}\to 0,\label{e:estimate}
\end{eqnarray}
where we have used the notation introduced in (\ref{eq:d2SteinBoundGeneral})--(\ref{e:tgv}), and the maximum is taken over all $p=1,\ldots,q-1$ such that $p\neq q/2$ and all $(r,\ell)$ such that $r\neq \ell$ and either $r=q$ and $\ell=0$, or $r\in\{1,\ldots,q\}$ and $\ell\in\{1,\ldots,\min(r,q-1)\}$. \end{theorem}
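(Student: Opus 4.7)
The strategy is to apply Theorem~\ref{thm:SteinBound} to $F_n=I_q(f_n)$ and to control each of $A_1(F_n)$, $A_2(F_n)$, $A_3(F_n)$ by a combination of the contraction norms entering \eqref{eq:GammaContractionsCondition}. The starting point is the chaos identity $-L^{-1}I_q(f_n)=q^{-1}I_q(f_n)$, which gives $D_zF_n=qI_{q-1}(f_n(z,\cdot))$ and $-D_zL^{-1}F_n=I_{q-1}(f_n(z,\cdot))$, so that
\[
\langle DF_n,-DL^{-1}F_n\rangle \;=\; q\int_{\sZ}I_{q-1}(f_n(z,\cdot))^2\,\mu_n(\dint z)\;\geq\;0\quad\text{a.s.-}\PP.
\]
In particular, the positivity assumption of Theorem~\ref{thm:SteinBound} is satisfied, and I may use the sharper estimate \eqref{eq:d2SteinBoundWithoutPlus} in terms of the $L^2$-type quantity $A'_1(F_n)$.

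The main computation concerns $A'_1(F_n)$. By the Poisson product formula, $I_{q-1}(f_n(z,\cdot))^2$ decomposes as a double sum of multiple integrals of the symmetrized contractions $\widetilde{f_n(z,\cdot)\s_r^\ell f_n(z,\cdot)}$; after a Fubini-type integration in $z$ (which turns $z$ into an additional identified-and-integrated coordinate) and the reindexing $r'=r+1$, $\ell'=\ell+1$, one arrives at
\[
\langle DF_n,-DL^{-1}F_n\rangle \;=\; q\sum_{r'=1}^{q}\sum_{\ell'=1}^{r'}(r'-1)!\binom{q-1}{r'-1}^{2}\binom{r'-1}{\ell'-1}\,I_{2q-r'-\ell'}\bigl(\widetilde{f_n\s_{r'}^{\ell'}f_n}\bigr).
\]
The term $(r',\ell')=(q,q)$ is deterministic and equals $q!\|f_n\|^{2}$ (converging to $2\nu$), while the $q$-th chaotic part corresponds to $r'+\ell'=q$ with $\ell'\leq r'$. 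An elementary binomial computation verifies that the ``diagonal'' choice $(r',\ell')=(q/2,q/2)$, combined with the explicit value $c_q=4/((q/2)!\binom{q}{q/2}^{2})$, produces precisely $2\,I_q(c_q^{-1}f_n\ts_{q/2}^{q/2}f_n)$. Subtracting $2(F_n+\nu)=2I_q(f_n)+2\nu$ and using orthogonality of multiple integrals of different orders together with the isometric formula, one obtains
\[
(A'_1(F_n))^{2}\;\leq\; C\Bigl(\bigl|q!\|f_n\|^{2}-2\nu\bigr|^{2}+\|f_n\ts_{q/2}^{q/2}f_n-c_qf_n\|^{2}+\sum_{(r',\ell')}\|f_n\s_{r'}^{\ell'}f_n\|^{2}\Bigr),
\]
where the sum ranges over the admissible pairs described in \eqref{eq:GammaContractionsCondition} and where the standard bound $\|\widetilde{f_n\s_{r'}^{\ell'}f_n}\|\leq\|f_n\s_{r'}^{\ell'}f_n\|$ has been applied.

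For $A_2(F_n)$ I use the factorization $A_2(F_n)\leq A_4(F_n)A_5(F_n)$ from \eqref{e:tgv}: $A_5(F_n)$ is bounded in $n$ by $\sqrt{\mathrm{Var}(F_n)}=\sqrt{q!}\,\|f_n\|$, which is bounded, while
\[
A_4(F_n)^{2}=q^{4}\int_{\sZ}\EE[I_{q-1}(f_n(z,\cdot))^4]\,\mu_n(\dint z)
\]
is expanded by two successive applications of the product formula and the mean-zero property $\EE[I_k(h)]=0$ for $k\geq 1$; the remaining order-zero contributions, after integration in $z$, are controlled by contraction norms of $f_n$ of the types in \eqref{eq:GammaContractionsCondition}, supplemented by $\|f_n\s_q^0 f_n\|=\|f_n^2\|$. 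The term $A_3(F_n)$ is treated via Proposition~\ref{l:newbound}: using the further identity $D_{z_2}D_{z_1}F_n=q(q-1)I_{q-2}(f_n(z_1,z_2,\cdot))$ and the same product/isometry machinery, each of the three expectations on the right-hand side of \eqref{e:art} reduces to the same family of contraction norms. Assembling these ingredients gives \eqref{e:estimate}, from which $d_3(I_q(f_n),\cGam_\nu)\to 0$ follows at once under the hypotheses.

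The principal obstacle is the combinatorial step identifying the $q$-chaotic part of $\langle DF_n,-DL^{-1}F_n\rangle$ and verifying that $c_q$ is precisely the constant for which the diagonal contraction $f_n\ts_{q/2}^{q/2}f_n$ matches $2F_n$ up to an $L^{2}$-error controlled by the norm $\|f_n\ts_{q/2}^{q/2}f_n-c_qf_n\|$. Once this identification is made, all remaining pieces in the estimates for $A_2$ and $A_3$ arise from routine, if notationally heavy, product-formula and isometry expansions, which neatly produce only contractions of the form already appearing in \eqref{eq:GammaContractionsCondition}.
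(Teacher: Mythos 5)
Your proposal is correct and follows essentially the same route as the paper: positivity of $\langle DF_n,-DL^{-1}F_n\rangle=q^{-1}\|DF_n\|^2$ to invoke the bound via $A_1'$, a product-formula/isometry expansion of the carr\'e-du-champ-type quantity isolating the middle contraction with the constant $c_q$ (your binomial identification of the coefficient $2/c_q$ for the $(q/2,q/2)$ term checks out), the factorization $A_2\leq A_4A_5$ with $A_4$ controlled by off-diagonal contractions, and Proposition~\ref{l:newbound} for $A_3$. The only cosmetic difference is that you organize the $A_1'$ estimate by writing the full chaos decomposition and applying orthogonality directly, whereas the paper expands $\EE[(\|DF_n\|^2-2qF_n-2q\nu)^2]$ through moment computations; these are the same calculation.
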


\begin{example}{\rm 
\begin{itemize}
\item[(i)] Assume $q=2$. Then, $c_2=1$ and the maximum in (\ref{e:estimate}) is taken over the following four quantities:
$$
\big|2\|f_n \|^2-2\nu\big|, \,\,\,\, \|f_n\s_2^0 f_n\|^{1/2}, \,\,\,\, \|f_n\s_2^1 f_n\|^{1/2}, \,\,\,\, \|f_n\ts_{1}^{1}f_n-f_n\|.
$$

\item[(ii)] Assume $q=4$. Then, $c_4=1/18$ and the maximum in (\ref{e:estimate}) is taken over the following ten quantities:
\begin{eqnarray*}
&& \big|2\|f_n \|^2-2\nu\big|, \,\,\,\, \|f_n\s_1^1 f_n\|, \,\,\,\, \|f_n\s_4^0 f_n\|^{1/2} ,\,\,\,\,  \|f_n\s_4^1 f_n\|^{1/2},\,\,\,\, \|f_n\s_4^2 f_n\|^{1/2}, \,\,\,\, \|f_n\s_4^3 f_n\|^{1/2}, \\
&& \|f_n\s_3^1 f_n\|^{1/2},\,\,\,\, \|f_n\s_3^2 f_n\|^{1/2},\,\,\,\,\|f_n\s_2^1 f_n\|^{1/2}, \,\,\,\, \|f_n\ts_{1}^{1}f_n-18^{-1}f_n\|, 
\end{eqnarray*}
where we have used the fact that $\|f_n\s_1^1 f_n\| = \|f_n\s_3^3 f_n\|$.
\end{itemize}
}
\end{example}

\medskip

\begin{remark} {\rm
\begin{itemize}

\item[(i)] Under the assumptions in the statement, one has that the sequence $$ A_5(I_q(f_n)) := \left(\,\int_{\sZ}\EE[|D_zL^{-1}I_q(f_n)|^2]\,\mu_n(\dint z)\right)^{1/2}$$ is such that
\begin{equation*}%\label{e:obvious}
A_5(I_q(f_n))^2 = (q-1)!\|f_n\|^2\to \frac{2\nu}{q}>0\qquad{\rm as}\qquad n\to\infty.
\end{equation*}
It follows that our inequality (\ref{e:estimate}) not only provides an analytic bound in the distance $d_3$, but also ensures that the three numerical sequences $\{A_1(I_q(f_n)): n\geq 1\}$, $\{A_3(I_q(f_n)): n\geq 1\}$ and $\{A_4(I_q(f_n)): n\geq 1\}$ (all related to Malliavin operators) converge to zero. This fact is crucial when dealing with the multidimensional results discussed in Section \ref{ss:multi}. An analogous remark applies to Proposition \ref{p:wi2} and Theorem \ref{t:gdj} below.

 \item[(ii)] Similar conditions (only involving contractions of the type $\star_r^r$, with $r=1,\ldots,q-1$) in the case of multiple integrals with respect to a Gaussian measure can be found in \cite[Theorem 1.2]{NourdinPeccatiNoncentral}. Non-central results of a similar flavor, in the context of free probability and multiple integrals with respect to a free Brownian motion, are proved in \cite{NPfree}.
 \item[(iii)] We were able to deduce meaningful conditions for Gamma approximations only in the case of an {\it even} integer $q\geq 2$. However, unlike in the Gaussian case (see \cite[Remark 1.3]{NourdinPeccatiNoncentral}), in a Poisson framework one cannot exclude a priori the existence of a sequence of multiple integrals of odd order converging to a limiting Gamma distribution. We prefer to consider this issue as a separate problem, and keep it as an open direction for future research.
 \item[(iv)] In the estimate (\ref{e:estimate}), and in contrast to the main bounds on normal approximations proved in \cite{PSTU2010}, norms of the type $\|f_n\s_r^\ell f_n\|$, $r\neq \ell$, appear under a square root. This phenomenon seems unavoidable, and it is directly related to the presence of cross terms arising from the specific form of the Stein equation associated with the Gamma distribution.
\end{itemize}}
\end{remark}

The following statement shows that condition (\ref{eq:GammaContractionsCondition}) might take a particularly attractive form in the case of double Poisson integrals. This will be used in order to prove the results presented in Section \ref{ss:dejong}, dealing with the Gamma approximation of degenerate $U$-statistics. 

\smallskip

\begin{proposition}[\textbf{Three moments suffice for Gamma approximations}]\label{p:wi2} Let the control measures $\{\mu_n : n\geq 1\}$ be finite, let $q = 2$ and let $\{f_n:n\geq 1\}\subset {L^2_{\rm sym}(\mu_n^2)}$ be such that $\lim\limits_{n\to\infty}\EE[I^2_2(f_n)] = \lim\limits_{n\to\infty}2\|f_n\|^2=2\nu$, and such that the technical conditions of Section \ref{sec:background}-(VIII) are satisfied. Assume in addition that $\int_{\sZ}f_n^4\,\dint\mu_n^2 \to 0$ and that $\EE[I_2^4(f_n)] <\infty$ for every $n$. Then, condition (\ref{eq:GammaContractionsCondition}) is verified if and only if 
\begin{equation}\label{e:moments}
\EE[I_2^4(f_n)] -12\EE[I_2^3(f_n)] \longrightarrow 12\nu^2- 48\nu\qquad{\rm as}\qquad n\to\infty. 
\end{equation}
In particular, if the sequence $F_n^4$ is uniformly integrable, then (\ref{eq:GammaContractionsCondition}) and (\ref{e:moments}) are both necessary and sufficient in order to have that the distribution of $F_n$ converges to $\cGam_\nu$ in the sense of the distance $d_3$.
\end{proposition}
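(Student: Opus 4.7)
My plan is to compute $\EE[F_n^3]$ and $\EE[F_n^4]$ explicitly via the Poisson product formula, form the combination $\EE[F_n^4]-12\EE[F_n^3]$, and show that, modulo remainders that vanish under the hypothesis $\int f_n^4\,\dint\mu_n^2\to 0$, this combination equals
\[
12\|f_n\|^4 - 48\|f_n\|^2 + 48\bigl\|f_n\ts_1^1 f_n - f_n\bigr\|^2 + 96\bigl\|\widetilde{f_n\s_1^0 f_n}\bigr\|^2 + 16\bigl\|f_n\s_2^1 f_n\bigr\|^2.
\]
Since $2\|f_n\|^2\to 2\nu$ implies $12\|f_n\|^4-48\|f_n\|^2\to 12\nu^2-48\nu$ and the three squared norms on the right are non-negative, the limit~(\ref{e:moments}) is equivalent to each of these three terms tending to zero. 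The elementary bound $\|\widetilde{f_n\s_1^0 f_n}\|^2 \leq \|f_n\s_1^0 f_n\|^2 = \|f_n\s_2^1 f_n\|^2$ (the first by $L^2$-contractivity of symmetrization, the second by Fubini) allows the term in $\widetilde{f_n\s_1^0 f_n}$ to be absorbed, so the equivalence further reduces to $\|f_n\s_2^1 f_n\|\to 0$ together with $\|f_n\ts_1^1 f_n - f_n\|\to 0$, i.e.\ condition~(\ref{eq:GammaContractionsCondition}) for $q=2$ as enumerated in Example~3.5(i).

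\textbf{Computing the moments.} I apply the Poisson product formula recalled in Section~\ref{sec:background}-(VI) to the pair $(I_2(f_n),I_2(f_n))$ to obtain the chaotic decomposition
\[
I_2(f_n)^2 = I_4\bigl(\widetilde{f_n\otimes f_n}\bigr) + 4I_3\bigl(\widetilde{f_n\s_1^0 f_n}\bigr) + 4I_2\bigl(f_n\s_1^1 f_n\bigr) + 2I_2(f_n^2) + 4I_1\bigl(f_n\s_2^1 f_n\bigr) + 2\|f_n\|^2,
\]
where I have used that $(f_n\s_1^1 f_n)(z_1,z_2)=\int f_n(w,z_1)f_n(w,z_2)\,\mu_n(\dint w)$ is already symmetric in $(z_1,z_2)$. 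Multiplying by $I_2(f_n)$ and using orthogonality of the Wiener-It\^o chaos to retain only the chaos-two summands gives $\EE[F_n^3]=8\langle f_n, f_n\s_1^1 f_n\rangle + 4\int f_n^3\,\dint\mu_n^2$, while taking the squared $L^2(\PP)$-norm of the above decomposition and again invoking chaos orthogonality yields
\[
\EE[F_n^4] = 4\|f_n\|^4 + 16\|f_n\s_2^1 f_n\|^2 + 32\|f_n\s_1^1 f_n\|^2 + 32\langle f_n\s_1^1 f_n, f_n^2\rangle + 8\|f_n^2\|^2 + 96\|\widetilde{f_n\s_1^0 f_n}\|^2 + 24\|\widetilde{f_n\otimes f_n}\|^2.
\]

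\textbf{Simplification.} The key algebraic identity
\[
\|\widetilde{f_n\otimes f_n}\|^2 = \tfrac{1}{3}\|f_n\|^4 + \tfrac{2}{3}\|f_n\s_1^1 f_n\|^2
\]
follows from writing the symmetrization of $f_n(z_1,z_2)f_n(z_3,z_4)$ as the average over the three distinct pair-partitions of $\{1,2,3,4\}$ and computing the pairwise inner products (the three diagonal terms each yield $\|f_n\|^4$ and the six off-diagonal terms each yield $\|f_n\s_1^1 f_n\|^2$). Cauchy-Schwarz gives $|\int f_n^3\,\dint\mu_n^2|\leq \|f_n\|\,(\int f_n^4\,\dint\mu_n^2)^{1/2}$, $\|f_n^2\|^2=\int f_n^4\,\dint\mu_n^2$, and $|\langle f_n\s_1^1 f_n, f_n^2\rangle|\leq \|f_n\s_1^1 f_n\|\,(\int f_n^4\,\dint\mu_n^2)^{1/2}$; combined with the elementary bound $\|f_n\s_1^1 f_n\|^2\leq\|f_n\|^4$ (yet another Cauchy-Schwarz), these three quantities are all $o(1)$ under the hypotheses. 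Substituting into the above formulas and invoking the one-line rearrangement $48\|f_n\s_1^1 f_n\|^2 - 96\langle f_n, f_n\s_1^1 f_n\rangle = 48\|f_n\s_1^1 f_n - f_n\|^2 - 48\|f_n\|^2$ produces the announced expression for $\EE[F_n^4]-12\EE[F_n^3]$, establishing the equivalence of~(\ref{eq:GammaContractionsCondition}) and~(\ref{e:moments}).

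\textbf{Uniform integrability, and the main obstacle.} Under uniform integrability of $\{F_n^4\}$, convergence of $F_n$ in distribution is equivalent to convergence of each of its first four moments. Since the second moment already converges to $2\nu$ by assumption and $\EE[\cGam_\nu^k]$ for $k=3,4$ equal $8\nu$ and $12\nu^2+48\nu$ respectively, convergence in distribution is equivalent to~(\ref{e:moments}), hence to~(\ref{eq:GammaContractionsCondition}) by the first part, which Theorem~\ref{thm:GammaContractions} upgrades to $d_3$-convergence. The principal technical hurdle is the combinatorial bookkeeping in the Poisson product formula and the derivation of the symmetrization identity for $\|\widetilde{f_n\otimes f_n}\|^2$; once these are in place, all remaining estimates reduce to Cauchy-Schwarz.
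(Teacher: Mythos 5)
Your proof is correct and follows essentially the same route as the paper: expand $I_2^2(f_n)$ via the product formula, use chaos orthogonality and the identity $4!\,\|\widetilde{f_n\otimes f_n}\|^2=2(2\|f_n\|^2)^2+16\|f_n\star_1^1f_n\|^2$ to write $\EE[F_n^4]-12\EE[F_n^3]$ as $12\|f_n\|^4-48\|f_n\|^2$ plus a sum of non-negative contraction norms (up to $o(1)$ remainders controlled by $\int f_n^4\,\dint\mu_n^2$), and conclude via Theorem \ref{thm:GammaContractions}. The only quibble is the phrase ``convergence in distribution is equivalent to convergence of the first four moments'' under uniform integrability -- only the forward implication is true in general -- but your argument does not actually rely on the converse, since the reverse direction of the cycle is supplied by Theorem \ref{thm:GammaContractions}.
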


\subsection{An extension of de Jong's theorem for degenerate U-statistics}\label{ss:dejong}

In the present and the subsequent section, we shall work within the following framework. We fix an integer $d\geq 1$, and let ${\bf Y} = \{Y_i : i\geq 1\}$ be a sequence composed of i.i.d.\ random variables with values in $\RR^d$, whose common distribution has a density $p(x)$ with respect to the Lebesgue measure on $\RR^d$ (written $\dint x$). The sequence $\{N(n) : n\geq 1\}$ of integer-valued random variables is independent of ${\bf Y}$ and such that, for every $n$, $N(n)$ has a Poisson distribution with parameter $n$. It is well-known that, in this setting, the random point measure 
\begin{equation}\label{e:etan}
\eta_n := \sum_{i=1}^{N(n)} \delta_{Y_i}
\end{equation}
(where $\delta_y$ represents the Dirac mass at $y$) is a Poisson measure on $\mathcal{Z} = \RR^d$ (equipped with the standard Borel $\sigma$-field $\mathscr{B}(\RR)^{\otimes d}$) with control measure $\mu_n(\dint x) =np(x)\dint x$. We shall also use the shorthand notation $\mu(\dint x) :=\mu_1(\dint x) = p(x)\dint x$. 

\smallskip

Our aim below is to provide a Gamma-type counterpart to a famous theorem by P. de Jong, proved in \cite{DJ}, involving sequences of degenerate $U$-statistics of order 2. We stress that the results contained in \cite{DJ} have later been extended to degenerate $U$-statistics of a general order; see \cite{BhGh,DJMulti}. Albeit our method clearly applies to these general objects, we prefer here to focus on $U$-statistics of order 2, in order to obtain neater statements and to emphasize the method over technical details. We start with some useful definitions.

\begin{definition}[\textbf{$U$-statistics}]\label{d:degu}{\rm 
\begin{itemize}
 \item[(i)] Let $k\geq 2$, and let $h : \RR^q \to \RR$ be a symmetric kernel such that $ h \in L^1_{\rm sym} (\mu^k)$. The (symmetric) {\it $U$-statistic of order $k$} based on $h$ and on the sample $\{Y_1,\ldots,Y_{m}\}$ (where $m\geq k$ is some integer) is the random variable
\begin{equation}\label{d:us} 
U_m(h,{\bf Y}) =\sumnod h(Y_{i_1},\ldots,Y_{i_k}), 
\end{equation}
where the symbol $\sum^{\neq}$ indicates that the sum is taken over all vectors $(i_1,\ldots,i_k)$ such that $i_j \neq i_\ell$ for every $j \neq \ell$.
 \item[(ii)] Fix $k\geq 2$ and let $U_m(h,{\bf Y})$ be a symmetric $U$-statistic as in (\ref{d:us}). The {\it Hoeffding rank} of $U_m(h,{\bf Y})$ is the smallest integer $1\leq q\leq k$ such that $ \EE[h(Y_1,\ldots,Y_k) | Y_1,\ldots,Y_{q-1}]  = 0$ (a.s.-$\mathbb{P}$) and  $ \EE[h(Y_1,\ldots,Y_k) | Y_1,\ldots,Y_{q}] \neq 0$, where $\EE[h(Y_1,\ldots,Y_k) | Y_1,\ldots,Y_{0}]  := \EE[h(Y_1,\ldots,Y_k)]$. A $U$-statistic of order $k$ with Hoeffding rank equal to $k$ is said to be {\it completely degenerate}. In other words, a $U$-statistic such as (\ref{d:us}) is completely degenerate if $h$ is a non-zero kernel verifying
$$\int_\RR h(x,y_1,\ldots,y_{k-1}) p(x)\, \dint x = 0\qquad (\mu^{k-1}-\mbox{a.e.}).$$
\item[(iii)] A collection of random variables $\{F_n : n\geq 1\}$ is said to be a sequence of {\it geometric $U$-statistics} of order $k$, if there exists a kernel $ h \in L^1_{\rm sym} (\mu^k)$ such that 
\begin{equation*}\label{e:gus}
F_n = U_{N(n)}(h,{\bf Y}), \quad n\geq 1,
\end{equation*}
where $\{N(n) : n\geq 1\}$ is the independent Poisson sequence introduced above.
\end{itemize}}
\end{definition}

Before presenting the main result of this section, and in order to make the connection with our general framework more transparent, we shall recall an important finding from \cite[Lemma 3.5 and Theorem 3.6]{ReitznerSchulte}, stating that Poissonized $U$-statistics of order $k$ live inside the sum of the first $k+1$ Wiener chaoses associated with the Poisson measure $\eta_n$. The proof heavily relies on results by Last and Penrose \cite{LastPenrose}.

\smallskip

\begin{lemma}[\textbf{Reitzner and Schulte}]
\label{prop:L1L2}
Consider a kernel $h\in L_{\rm sym}^1(\mu^k)$ such that the corresponding Poissonized $U$-statistic $U_{N(n)}(h, {\bf Y})$ is square-integrable. Then, $h$ is necessarily in $L_{\rm sym}^2(\mu^k)$, and $U_{N(n)}(h, {\bf Y})$ admits a chaotic representation of the type
$$
U_{N(n)}(h, {\bf Y}) = \EE[U_{N(n)}(h, {\bf Y})] +\sum_{i=1}^k n^{k-i} I_i (h_i)
$$ 
where $I_i$ indicates a multiple Wiener-It\^o integral of order $i$ with respect to the compensated Poisson measure $\hat\eta_n = \eta_n - \mu_n$, defined according to (\ref{e:etan}), and
\begin{equation}\label{e:lastpenrose}
h_i (z_1,\ldots,z_i)= \binom{k}{i}\int_{\sZ^{k-i}} h(z_1,\ldots,z_i, \bullet )\, \mu^{k-i}(\dint\,\bullet),\quad (z_1,\ldots,z_i) \in \sZ^i,
\end{equation}
where the bullet ``$\, \bullet$'' stands for a packet of $k-i$ variables that are integrated with respect to $\mu^{k-i}$. In particular, $h=h_k$ and the projection $h_{i}$ is in $ L_{\rm sym}^{2}(\mu^i)$ for each $1 \leq i \leq k$.\\
\end{lemma}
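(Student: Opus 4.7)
The plan is to reduce the result to the general formula of Last and Penrose \cite{LastPenrose}, which expresses the $i$-th kernel in the chaotic expansion of any square-integrable Poisson functional $F$ as $f_i(z_1, \ldots, z_i) = \frac{1}{i!}\, \EE[D^i_{z_1, \ldots, z_i} F]$, where $D^i$ is the $i$-fold iterated add-one-cost (difference) operator. Once this formula is in hand, identifying the kernels of $U_{N(n)}(h, \mathbf{Y})$ becomes a purely combinatorial task, and the two $L^2$-statements then fall out of the orthogonality of chaoses.

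The first step is to compute $D^i_{z_1, \ldots, z_i} U_{N(n)}(h, \mathbf{Y})$ explicitly. I would rewrite the $U$-statistic as an integral over pairwise distinct $k$-tuples of atoms of $\eta_n$ and then, using $D_z F(\eta_n) = F(\eta_n + \delta_z) - F(\eta_n)$ together with the symmetry of $h$, observe that adding a single atom at $z$ creates exactly $k$ new $k$-tuples involving $z$ (one per slot, all contributing the same value by symmetry). An induction on $i$ should then yield
\[
D^i_{z_1,\ldots,z_i} U_{N(n)}(h, \mathbf{Y}) = \frac{k!}{(k-i)!} \int_{\sZ^{k-i}}^{\neq} h(z_1,\ldots, z_i, w_1, \ldots, w_{k-i})\, \eta_n^{k-i}(\dint w_1,\ldots,\dint w_{k-i}),
\]
where the main care is to ensure that the distinctness constraint and the symmetrisation do not create over-counting, particularly when the new atoms $z_1,\ldots,z_i$ start overlapping existing slots. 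This is the only step where combinatorial bookkeeping is delicate; the rest is formal.

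Taking expectations in the preceding display via the Campbell/Mecke formula for factorial moment measures (the $(k-i)$-th factorial moment measure of $\eta_n$ being $\mu_n^{k-i} = n^{k-i}\mu^{k-i}$) gives
\[
\EE\bigl[D^i_{z_1,\ldots,z_i} U_{N(n)}(h, \mathbf{Y})\bigr] = \frac{k!}{(k-i)!}\, n^{k-i} \int_{\sZ^{k-i}} h(z_1,\ldots,z_i, \bullet)\, \mu^{k-i}(\dint \bullet),
\]
so that $\frac{1}{i!}\EE[D^i_{z_1,\ldots,z_i} F] = n^{k-i}\, h_i$, with $h_i$ as in \eqref{e:lastpenrose}. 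Inserting this into the Last-Penrose representation yields the announced chaotic decomposition.

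Finally, the integrability assertions follow directly from the orthogonality of Wiener-It\^o integrals: the assumed square-integrability of $U_{N(n)}(h, \mathbf{Y})$ forces $\sum_{i=1}^k i!\, n^{2(k-i)}\,\|h_i\|_{L^2(\mu_n^i)}^2 < \infty$, so each $h_i$ lies in $L^2_{\rm sym}(\mu^i)$; since $h_k = h$, this yields in particular $h \in L^2_{\rm sym}(\mu^k)$. I expect the combinatorial step involving the iterated difference operator to be the only real obstacle; everything else is essentially bookkeeping around the Last-Penrose formula.
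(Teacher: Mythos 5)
Your proof is correct, and it follows essentially the route the paper indicates: the paper gives no proof of its own but cites Reitzner and Schulte and explicitly notes that the argument ``heavily relies on results by Last and Penrose'', which is precisely the representation $f_i=\frac{1}{i!}\EE[D^i_{z_1,\ldots,z_i}F]$ you use (the same formula the paper itself invokes in its proof of Lemma \ref{l:rip}). Your combinatorial evaluation of the iterated difference operator, the factor $k!/(k-i)!$ from the free slots, the Mecke-formula step producing $n^{k-i}$, and the deduction of $h\in L^2_{\rm sym}(\mu^k)$ from orthogonality of chaoses are all sound.
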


The following statement corresponds to the main result proved by de Jong in \cite{DJ}, in the special case of symmetric $U$-statistics of order $2$ (note that the assumption that the underlying kernels have finite moments of order four is only implicit in de Jong's work). Given positive sequences $a_n, b_n$, $n\geq 1$, we write $a_n\approx b_n$ whenever $\lim\limits_{n\to\infty} a_n/b_n=1$.

\smallskip

\begin{theorem}[\textbf{de Jong}]\label{t:dj} Let $\{h_n : n\geq 1\}$  be a sequence of non-zero elements of  $L^4_{\rm sym} (\mu^2)$. Define $F_n = U_n(h_n, {\bf Y})$ and assume that $F_n$ is completely degenerate. Then, one has that $\sigma^2(n):= {\rm Var}( F_n) \approx 2n^2 \EE[h_n(Y_1,Y_2)^2]$, and the fourth moment condition
$$
\lim_{n\to\infty}\frac{\EE[F_n^4]}{\sigma(n)^4} = 0,
$$
implies that, as $n\to \infty$, the sequence $\widetilde{F}_n := F_n/\sigma(n)$ converges in distribution to a standard Gaussian random variable.
\end{theorem}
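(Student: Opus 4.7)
The plan is to reduce the statement for the classical $U$-statistic $F_n$ to a central limit theorem for a pure second-chaos Poisson integral, and then to de-Poissonize. First I would introduce the Poissonized $U$-statistic $F_n^P := U_{N(n)}(h_n,{\bf Y})$ with $N(n)$ as in \eqref{e:etan}. Since $h_n$ is completely degenerate, both of the projections $h_{n,0}$ and $h_{n,1}$ appearing in \eqref{e:lastpenrose} vanish, so Lemma \ref{prop:L1L2} gives $F_n^P = I_2(h_n)$, a pure element of the second Wiener-It\^o chaos associated with $\hat\eta_n$, whose variance equals $\EE[(F_n^P)^2] = 2\|h_n\|^2_{L^2(\mu_n^2)} = 2n^2\EE[h_n(Y_1,Y_2)^2]\sim \sigma(n)^2$.

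Next I would invoke the quantitative Stein-Malliavin bound for normal approximations of double Poisson integrals from \cite{PSTU2010}, which is schematically of the form
\[
d\bigl(I_2(h_n)/\sigma(n),\mathcal{N}(0,1)\bigr)\leq \frac{C}{\sigma(n)^2}\Bigl(\|h_n\s_1^1 h_n\|+\|h_n\s_2^1 h_n\|+\|h_n\|^2_{L^4(\mu_n^2)}\Bigr),
\]
and then use the product formula for multiple Poisson integrals to expand the fourth cumulant $\EE[I_2(h_n)^4]-3\bigl(\EE[I_2(h_n)^2]\bigr)^2$ as a linear combination with non-negative coefficients of $\|h_n\s_1^1 h_n\|^2$, $\|h_n\s_2^1 h_n\|^2$ and $\int h_n^4\,\dint\mu_n^2$. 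The hypothesis on the fourth moment of $F_n$ (interpreted as the vanishing of the normalized fourth cumulant) then forces each of these contractions to be $o(\sigma(n)^2)$ after normalization, so that $F_n^P/\sigma(n)\to \mathcal{N}(0,1)$ in distribution.

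Finally I would de-Poissonize: writing $F_n^P-F_n$ as a sum of completely degenerate contributions arising from the Poisson fluctuation $|N(n)-n|$ of typical order $\sqrt n$, one verifies that ${\rm Var}(F_n^P-F_n)=O(n^{3/2}\EE[h_n^2])$, which is of order $\sigma(n)^2/\sqrt n$ and hence negligible, so Slutsky's theorem transfers the CLT to $F_n/\sigma(n)$. The main obstacle is the combinatorial step above: unlike on the Gaussian-Wiener chaos, the Poisson second chaos carries an additional diagonal term $\int h_n^4\,\dint\mu_n^2$ stemming from the difference-operator nature of the Malliavin derivative, and one must verify through a careful bookkeeping of the product formula that this term (in addition to the $\s_r^\ell$ contractions) appears with positive sign in the expansion of the fourth cumulant, so that a scalar fourth-moment condition indeed suffices to kill all of them simultaneously.
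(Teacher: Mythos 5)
Your proposal is correct and follows essentially the same route as the paper's own argument (which appears as the proof of Theorem \ref{t:gdj}-(A), the quantitative form of this statement): Poissonize and use Lemma \ref{prop:L1L2} together with complete degeneracy to identify $F'_n=I_2(h_n)$, apply the bound of \cite[Theorem 4.2]{PSTU2010}, expand $\EE[I_2^4(f_n)]$ via the product formula \eqref{eq:ProductFormula} into $3(2\|f_n\|^2)^2$ plus a sum of non-negative contraction terms, and de-Poissonize through the Dynkin--Mandelbaum estimate $\EE[(\widetilde F'_n-\widetilde F_n)^2]=O(n^{-1/2})$. Two small remarks: you rightly read the hypothesis as the vanishing of the normalized fourth \emph{cumulant} (the ``$\to 0$'' in the statement is a typo for ``$\to 3$'', since $\EE[\widetilde F_n^4]\geq 1$ always), and the ``careful bookkeeping'' you flag is resolved exactly as in the paper, namely $\int f_n^4\,\dint\mu_n^2\to 0$ is extracted from the cross term $\|4f_n\s_1^1f_n+2f_n^2\|^2\to 0$ combined with the separately occurring term $\|f_n\s_1^1f_n\|^2\to 0$.
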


The following statement consists of two parts. Part (A) is a quantitative extension of Theorem \ref{t:dj} based on a direct study of the fourth moments of the Poissonized $U$-statistic, whereas part (B) is a Gamma-type extension of de Jong's theorem which is directly based on the results discussed in Section \ref{ss:gen}. Apart from \cite{DJ}, our findings should be compared with the seminal work by Jammalamadaka and Janson \cite{JJ}, about the normal and Poisson approximation of $U$-statistics of order two. To our knowledge, the forthcoming Theorem \ref{t:gdj} is the first quantitative extensions of the de Jong theorem, also dealing with the non-normal approximation of general degenerate $U$-statistics. Moreover, we would like to emphasize that our proof of Part (A) is shorter and more transparent than the one presented in the original work \cite{DJ} (one should note that, however, our methods only allow us to deal with symmetric $U$-statistics). Recall that the {\it Wasserstein distance} between the laws 
of two integrable random variables $X,Y$ is given by
$$
d_W(X,Y) := \sup_{h\in {\rm Lip}(1)} \left| \EE[h(X)] - \EE[h(Y)]\right|,
$$
where ${\rm Lip}(1)$ is the set of Lipschitz functions $h:\RR\to\RR$ with a Lipschitz constant $\leq 1$. Recall that, in the framework of this section, $\mathcal{Z} = \RR^d$.
\smallskip

\begin{theorem}[\textbf{Extended de Jong theorem}]\label{t:gdj} Let $\{h_n : n\geq 1\}$  be a sequence of non-zero elements of  $L^4_{\rm sym} (\mu^2)$ such that 
\begin{equation*}\label{e:sup}
\sup_n\,  \frac{\int_{\sZ}h_n^4\,\dint\mu_n^2}{\left( \int_{\sZ}h_n^2\,\dint\mu_n^2\right)^2} <\infty.
\end{equation*}
Put $F_n = U_n(h_n, {\bf Y})$ and $F'_n = U_{N(n)}(h_n, {\bf Y})$, and assume that these $U$-statistics are completely degenerate. Then, $\sigma(n)^2:= {\rm Var}( F_n) \approx {\rm Var}( F'_n)  = 2n^2\, \EE[h_n(Y_1,Y_2)^2]$, and the following two points {\rm (A)} and {\rm (B)} hold.
\begin{itemize}
\item[\rm (A)] If
\begin{equation}
\label{e:c1}\frac{\EE[(F'_n)^4]}{\sigma(n)^4} \to 0\qquad{\rm as}\qquad n\to\infty,
\end{equation}
then both $\widetilde{F}_n := F_n/\sigma(n)$ and $\widetilde{F}'_n := F'_n/\sigma(n)$ converge in distribution to a standard Gaussian random variable $N$. Moreover, there exists a universal finite constant $K$, independent of $n$, such that, as $n\to\infty$,
\begin{eqnarray}
\label{e:e1}  d_W(\widetilde{F}'_n , N) \leq K\times B_n &\longrightarrow& 0\,,\\
\label{e:e2} d_W(\widetilde{F}_n , N) \leq K\times\big(B_n+n^{-1/4}\big)&\longrightarrow& 0,
\end{eqnarray}
with $B_n:=\sigma(n)^{-2}\max\left\{\big(\int_{\sZ}h_n^4\,\dint\mu_n^2\big)^{1/2}; \|h_n\star_1^1 h_n \|; \|h_n\star_2^1 h_n \|\right\}$.

\item[\rm (B)] If $\int_{\sZ}h_n^4\,\dint\mu_n^2\to 0$ and there exists $\nu>0$ such that $\sigma(n)^2\to 2\nu$, and 
\begin{equation}\label{e:c2}
\EE[(F_n')^4] -12\EE[(F_n')^3] \longrightarrow 12\nu^2- 48\nu\qquad{\rm as}\qquad n\to\infty,
\end{equation}
then both $F_n$ and $F'_n$ converge in distribution to a random variable $G(\nu)$, which has distribution $\cGam_\nu$. Moreover, there exists a universal constant $K>0$ such that, as $n\to \infty$,
\begin{eqnarray}
\label{e:e3} d_3(F'_n,\cGam_\nu) \leq c_1A_1(F'_n)+c_2A_4(F'_n)\times A_5(F'_n)\leq K \times C_n & \longrightarrow& 0\,, \\
 \label{e:e4}d_3({F}_n , \cGam_\nu) \leq K\times \big( C_n +n^{-1/4}\big)&\longrightarrow& 0,
\end{eqnarray}
with $C_n:=\max\left\{ | 2\|h_n\|^2-2\nu|; \big(\int_{\sZ}h_n^4\,\dint\mu_n^2\big)^{1/4};\|h_n\s_2^1 h_n\|^{1/2}; \|h_n\ts_{1}^{1}h_n-h_n\|\right\}$, and we have used the notation introduced in (\ref{eq:d2SteinBoundGeneral})--(\ref{e:tgv}).
\end{itemize} 
\end{theorem}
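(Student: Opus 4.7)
The plan is to reduce both statements to quantitative approximation results for the Poissonized statistic $F'_n$, obtained via the Malliavin--Stein machinery developed earlier in the paper, and then transfer the bounds to $F_n$ through a Poissonization coupling. Applying Lemma \ref{prop:L1L2} with $k=2$ to $F'_n = U_{N(n)}(h_n,\mathbf{Y})$ and using complete degeneracy, one checks that the Hoeffding projection $h_{n,1}(z) = 2\int h_n(z,y)\,\mu(\dint y)$ vanishes $\mu$-a.e., while $\EE[F'_n] = n^2 \int h_n\,\dint\mu^2 = 0$; consequently $F'_n = I_2(h_n)$, where $I_2$ denotes the double Wiener--It\^o integral with respect to $\hat\eta_n$. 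The Wiener--It\^o isometry then produces $\operatorname{Var}(F'_n) = 2\|h_n\|_{L^2(\mu_n^2)}^2 = 2n^2\|h_n\|_{L^2(\mu^2)}^2$, whereas a standard pairing argument for the degenerate statistic $U_n$ yields $\sigma(n)^2 = 2n(n-1)\|h_n\|_{L^2(\mu^2)}^2$, so $\sigma(n)^2 / \operatorname{Var}(F'_n)\to 1$.

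The second key ingredient is the Poissonization coupling. Conditioning on $N(n)$ and exploiting orthogonality between sums of $h_n(Y_i,Y_j)$ over disjoint index pairs (which follows from complete degeneracy), a direct computation gives the exact identity
$$\EE\bigl[(F_n - F'_n)^2\bigr] = 2\|h_n\|^2\,\EE\!\left[\,|N(n)-n|\,(N(n)+n-1)\right].$$
Cauchy--Schwarz combined with $\operatorname{Var}(N(n))=n$ produces $\EE[(F_n-F'_n)^2] \leq K\|h_n\|^2 n^{3/2} = K\sigma(n)^2/\sqrt{n}$, whence by Jensen $\EE|F_n-F'_n| \leq K\sigma(n)/n^{1/4}$ and $\EE|\widetilde F_n -\widetilde F'_n| \leq Kn^{-1/4}$. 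Since both $d_W$ and $d_3$ are dominated by $\EE|X-Y|$ (for $d_3$ because the $\cH^3$-test functions are $1$-Lipschitz), this is exactly what is needed to replace $F'_n$ by $F_n$ at the cost of the extra $n^{-1/4}$ summand appearing in \eqref{e:e2} and \eqref{e:e4}.

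For Part (A), I would apply the quantitative CLT for double Poisson integrals from \cite{PSTU2010} to $\widetilde F'_n = I_2(h_n/\sigma(n))$, yielding $d_W(\widetilde F'_n, N) \leq K B_n$. To see that $B_n\to 0$ under \eqref{e:c1}, I would invoke the product formula for $I_2(h_n)^2$ on the Poisson chaos, which expresses $\EE[(F'_n)^4]$ as $3\operatorname{Var}(F'_n)^2$ plus a non-negative linear combination of $\|h_n\star_1^1 h_n\|^2$, $\|h_n\star_2^1 h_n\|^2$ and $\int h_n^4\,\dint\mu_n^2$; vanishing of $\EE[(F'_n)^4]/\sigma(n)^4$ therefore forces each of these quantities, normalized by $\sigma(n)^2$, to tend to zero, giving $B_n\to 0$ and \eqref{e:e1}. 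Estimate \eqref{e:e2} then follows from the coupling above.

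For Part (B), the assumptions $\sigma(n)^2\to 2\nu$, $\int h_n^4\,\dint\mu_n^2\to 0$, and \eqref{e:c2} are precisely those of Proposition \ref{p:wi2} applied to the sequence $\{h_n\}$, which therefore supplies the contraction conditions \eqref{eq:GammaContractionsCondition} with $q=2$; Theorem \ref{thm:GammaContractions} then produces $d_3(F'_n,\cGam_\nu) \leq c_1 A_1(F'_n)+c_2 A_4(F'_n)A_5(F'_n) \leq K C_n$, yielding \eqref{e:e3}, and the coupling propagates the bound to $F_n$ to give \eqref{e:e4}. The principal technical obstacle is the combinatorial identity for $\EE[(F_n-F'_n)^2]$, which requires careful book-keeping across the two cases $N(n)\lessgtr n$; a secondary point is verifying that the integrability hypotheses of Section \ref{sec:background}-(VIII) are inherited by the kernels $h_n$ from the uniform boundedness assumption on $\int h_n^4\,\dint\mu_n^2 / (\int h_n^2\,\dint\mu_n^2)^2$ stated in the theorem.
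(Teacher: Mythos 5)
Your proposal is correct and follows essentially the same route as the paper: reduce to $F'_n = I_2(h_n)$ via Lemma \ref{prop:L1L2} and complete degeneracy, invoke \cite[Theorem 4.2]{PSTU2010} together with the product-formula expansion of $\EE[I_2^4]$ for Part (A), invoke Proposition \ref{p:wi2} and Theorem \ref{thm:GammaContractions} for Part (B), and de-Poissonize at a cost of $O(n^{-1/4})$. The one genuine difference is the de-Poissonization step: the paper simply quotes the estimates of Dynkin and Mandelbaum \cite[pp.~744--745]{DyMa} to get $\EE[(\widetilde{F}'_n-\widetilde{F}_n)^2]=O(n^{-1/2})$, whereas you derive the exact identity $\EE[(F_n-F'_n)^2]=2\,\EE[h_n(Y_1,Y_2)^2]\,\EE\big[|N(n)-n|\,(N(n)+n-1)\big]$ from the orthogonality of the summands under complete degeneracy; this computation is correct and makes the argument self-contained, at no real extra cost. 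One caveat you inherit from the statement itself: the decomposition you describe, $\EE[(F'_n)^4]=3\,{\rm Var}(F'_n)^2+(\mbox{non-negative contraction terms})$, shows that $\EE[(F'_n)^4]/\sigma(n)^4$ is bounded below by a quantity tending to $3$ and hence cannot tend to $0$; condition (\ref{e:c1}) should read ``$\to 3$'' (as the paper's own proof implicitly assumes when it notes that $3(2\|f_n\|^2)^2\to 3$), and with that correction your argument that the normalized contraction norms vanish goes through verbatim.
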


\smallskip

\begin{remark}\label{r:lapalisse}{\rm Our proof of Theorem \ref{t:gdj} shows indeed that the quantity $B_n$ (reps.\ $C_n$) in the statement converges to zero if and only if the asymptotic condition (\ref{e:c1}) (resp.\ \eqref{e:c2}) is verified. }
\end{remark}

\subsection{Gamma convergence of geometric $U$-statistics: characterization and bounds}

As anticipated, the aim of this section is to apply the main estimates of the present paper in order to characterize the class of geometric $U$-statistics based on ${\bf Y}$ converging in distribution towards a Gamma random variable. Since our analysis is based on Theorem \ref{t:gdj}, our results will provide explicit estimates on the speed of convergence. We refer the reader to \cite{DyMa, RubVit} for some classic references on the subject and to \cite{LRP2, ReitznerSchulte} for a discussion of several recent developments. We let the notation and assumptions of the previous section prevail and recall that a {\it Gaussian measure} $G$ on $\big(\RR^d, \mathscr{B}(\RR)^{\otimes d}\big)$, with control $\mu(\dint x) = p(x)\dint x$, is a centred Gaussian family of the type
$$
G = \{G(B) : B\in \mathscr{B}(\RR)^{\otimes d}, \,\, \mu(B)<\infty\}
$$
such that, for every $m \geq 1$ and every $B_1,\ldots,B_m\in\mathscr{B}(\RR)^{\otimes d}$ with $\mu(B_i)<\infty$ ($i=1,\ldots,m$), the vector $\big(G(B_1),\ldots,G(B_m)\big)$ has an $m$-dimensional joint Gaussian distribution with covariance matrix $\EE[G(B_i)G(B_j)] = \mu(B_i\cap B_j)$.

\smallskip

The next statement combines findings from \cite[Section 7]{LRP2} (point (i)) with a classic characterization of elements in the second Wiener chaos of a Gaussian measure (point (ii); see \cite[Section 2.7.4]{NPBook} for more details.

\smallskip

\begin{proposition}\label{p:LR} Let $k\geq 2$ and let $h\in L^1_{\rm sym}(\mu^k)$ be a non-zero kernel such that the $U$-statistic $F'_n := U_{N(n)}(h,{\bf Y})$ is square-integrable for every $n$, and has Hoeffding rank equal to 2. For $n\geq 1$, define also the standardized $U$-statistic $\widetilde{F}'_n = n^{1-k} F'_n$.
\begin{itemize}
\item[\rm (i)] For every $n$, there exists a sequence of double integrals $I_2(f_n)$ (each realized with respect to the compensated Poisson measure $\eta_n - \mu_n$) such that, as $n\to \infty$, $\EE[(\widetilde{F}'_n-I_2(f_n))^2]\to 0$. Moreover, $\widetilde{F}'_n$ converge in distribution to $I_2^G(h_2)$, where $I_2^G$ indicates a double Wiener-It\^o integral with respect to the Gaussian measure $G$, and $h_2$ is defined according to (\ref{e:lastpenrose}). The same convergence takes place for the de-Poissonized $U$-statistics $\widetilde{F}_n = n^{1-k} F_n$, where $F_n := U_{n}(h,{\bf Y})$.
\item[\rm (ii)] The random variable $I_2^G(h_2)$ cannot be Gaussian. Moreover, assume that $I_2^G(h_2)$ follows a $\cGam_\nu$-distribution. Then, necessarily, $\nu\in \{1,2,\ldots\}$ and there exists an orthonormal system $\{e_1,\ldots,e_\nu\}\subset L^2(\mu)$ such that $\displaystyle h_2 = \sum_{i=1}^\nu e_i\otimes e_i$ and $\EE[e_i(Y_1)] = 0$ for $i=1,\ldots,\nu$.  
\end{itemize}
\end{proposition}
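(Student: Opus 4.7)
\textbf{Proof plan for Proposition \ref{p:LR}.}

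For Part (i), I would start by applying Lemma \ref{prop:L1L2} to $F'_n = U_{N(n)}(h,{\bf Y})$. The Hoeffding rank 2 assumption immediately forces $\EE[F'_n] = 0$ and the first Hoeffding projection $h_1$ to vanish $\mu$-a.e., so that
\[
\widetilde{F}'_n = n^{1-k}F'_n = n^{-1} I_2(h_2) + \sum_{i=3}^k n^{1-i} I_i(h_i).
\]
Setting $f_n := n^{-1} h_2$ and using the Poisson isometry $\EE[I_i(h_i)^2] = i!\, n^i\, \|h_i\|_{L^2(\mu^i)}^2$, one obtains $\EE[(\widetilde{F}'_n - I_2(f_n))^2] = \sum_{i=3}^k i!\, n^{2-i}\,\|h_i\|^2 = O(n^{-1})\to 0$. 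The distributional convergence $I_2(f_n)\to I_2^G(h_2)$ follows from the general Poisson-to-Gaussian principle for second chaos: when the intensity diverges like $n\mu$ and the integrand is rescaled as $n^{-1}h_2$, all cumulants of $I_2(f_n)$ converge to those of $I_2^G(h_2)$, equivalently, the non-trivial contractions $\|f_n\star_r^\ell f_n\|$ (with $(r,\ell)\neq(2,2)$) tend to $0$ since each contraction carries an extra power of $n^{-1}$; this is the content of \cite[Section 7]{LRP2}. To transfer the convergence to the de-Poissonized statistic $\widetilde{F}_n$, I would use a standard coupling between $U_n$ and $U_{N(n)}$, exploiting $\mathrm{Var}(N(n))=n$ and complete degeneracy of $h$ to obtain $\EE[(\widetilde{F}_n - \widetilde{F}'_n)^2]\to 0$.

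For Part (ii), the spectral theorem applied to the Hilbert--Schmidt operator on $L^2(\mu)$ with symmetric kernel $h_2$ yields $h_2 = \sum_{i\geq 1}\lambda_i\, e_i\otimes e_i$ for real eigenvalues $\lambda_i$ and an orthonormal system $\{e_i\}\subset L^2(\mu)$. The Gaussian product formula $I_2^G(e_i\otimes e_i) = I_1^G(e_i)^2-1$ then gives
\[
I_2^G(h_2) = \sum_{i\geq 1}\lambda_i(N_i^2-1),
\]
with $N_i$ i.i.d.\ standard normal. The fourth cumulant of this random variable equals $48\sum_i\lambda_i^4>0$ (as $h_2\neq 0$), ruling out the Gaussian case. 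If moreover $I_2^G(h_2)\sim\cGam_\nu$, matching cumulants of order $p\geq 2$ yields
\[
\sum_i\lambda_i^p = \nu\qquad\text{for every integer } p\geq 2.
\]
Letting $p\to\infty$ along even values forces $\max_i\lambda_i^2=1$ and that exactly $\nu$ of the $\lambda_i^2$ equal $1$ (the rest being zero), so $\nu\in\{1,2,\ldots\}$. The case $p=3$ then rules out $\lambda_i=-1$, giving $\lambda_i\in\{0,1\}$ with exactly $\nu$ ones and, after reindexing, $h_2 = \sum_{i=1}^\nu e_i\otimes e_i$.

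To derive the centering conditions, I would note that by symmetry of $h$ and formula (\ref{e:lastpenrose}), $\int h_2(x,y)\,\mu(\dint x) = \tfrac{k-1}{2}\,h_1(y) = 0$ for $\mu$-a.e.\ $y$; substituting the diagonal representation yields $\sum_{i=1}^\nu\EE[e_i(Y_1)]\,e_i(y)=0$, and orthonormality of $\{e_i\}$ forces $\EE[e_i(Y_1)]=0$ for each $i$. The step I expect to be the most delicate is the Poisson-to-Gaussian convergence in Part (i): although classical, it requires care because each $I_2(f_n)$ is defined with respect to a different Poisson measure $\hat\eta_n$, and one must exploit the specific rescaling $f_n=n^{-1}h_2$ to obtain the required decay of contractions.
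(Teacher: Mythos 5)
Your overall architecture matches the paper's treatment: the paper does not actually prove this proposition but assembles it from \cite[Section 7]{LRP2} for point (i) and from the classical characterization of second-Gaussian-chaos laws in \cite[Section 2.7.4]{NPBook} for point (ii), and your proposal supplies correct detailed arguments for both ingredients. In particular, your part (ii) -- spectral decomposition $h_2=\sum_i\lambda_i\,e_i\otimes e_i$, the representation $I_2^G(h_2)=\sum_i\lambda_i(N_i^2-1)$, matching of cumulants $\sum_i\lambda_i^p=\nu$ for all $p\ge2$, and the limits $p\to\infty$ (even) and $p=3$ to force $\lambda_i\in\{0,1\}$ with exactly $\nu$ ones -- is exactly the standard proof the paper is pointing to, and your derivation of $\EE[e_i(Y_1)]=0$ from $\int h_2(x,\cdot)\,\mu(\dint x)=\tfrac{k-1}{2}h_1=0$ is correct. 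The chaos-truncation estimate $\EE[(\widetilde F_n'-I_2(f_n))^2]=O(n^{-1})$ via Lemma \ref{prop:L1L2} and the isometry over $\mu_n=n\mu$ is also right.

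One assertion in part (i) is genuinely wrong and you should repair it: with $f_n=n^{-1}h_2$ and control $\mu_n=n\mu$, the contraction $f_n\star_1^1f_n$ does \emph{not} tend to zero; a direct computation gives $\|f_n\star_1^1f_n\|_{L^2(\mu_n^2)}=\|h_2\star_1^1h_2\|_{L^2(\mu^2)}$, a nonzero constant. Only the contractions with $\ell<r$ (namely $\star_1^0$, $\star_2^0$, $\star_2^1$) pick up negative powers of $n$ and vanish. This is not a cosmetic slip: if \emph{all} contractions with $(r,\ell)\neq(2,2)$ vanished, the fourth-moment criterion of \cite{PSTU2010} would force $I_2(f_n)$ to converge to a \emph{Gaussian} limit, contradicting your own part (ii). The survival of $f_n\,\widetilde\star_1^1 f_n$ is precisely what encodes the non-Gaussian (second-chaos) structure of the limit; compare condition (\ref{eq:GammaContractionsCondition}) for $q=2$, which requires $\|f_n\,\widetilde\star_1^1f_n-f_n\|\to0$ rather than $\|f_n\star_1^1f_n\|\to0$. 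Your primary justification -- convergence of all cumulants, as established in \cite[Section 7]{LRP2} -- is the correct mechanism and is what you should rely on; simply delete or correct the ``equivalently'' clause. The de-Poissonization via $\EE[(\widetilde F_n-\widetilde F_n')^2]\to0$ is what the paper does elsewhere as well (citing the estimates of \cite[pp.~744--745]{DyMa}, which give the rate $O(n^{-1/2})$).
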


\begin{remark} {\rm Let $k=2$, and consider $h_2=\sum\limits_{i=1}^\nu e_i\otimes e_i$, as in the statement of Proposition \ref{p:LR}-(ii). Then, it is easily seen (by a direct computation) that 
$$
U_{n}(h_2,{\bf Y}) = \sum_{i=1}^\nu\left[\left(\sum_{k=1}^n e_i(Y_k) \right)^2 - \sum_{k=1}^n e_i(Y_k)^2\right].
$$
The fact that the distributions of $\widetilde{F}'_n $ and $\widetilde{F}_n $ converge to $\cGam_\nu$ is therefore a direct consequence of the usual multidimensional central limit theorem and of the law of large numbers. }
\end{remark}
\smallskip
The next statement is a quantitative counterpart to Proposition \ref{p:LR}-(ii), containing in particular estimates involving Malliavin operators. Such estimates will be put into use in the Examples \ref{ex:lilly}--\ref{ex:lilly4} below, where the asymptotic behavior of a $U$-statistic such as $U_{N(n)}(h_2,{\bf Y})$ is studied within the framework of hybrid convergence in random graphs, random flat and random simplex models.

\smallskip

\begin{theorem}[\textbf{Bounds on Gamma convergence}]\label{t:gammaU} Let the assumptions and notation of Proposition \ref{p:LR} prevail. Assume moreover that $I_2^G(h_2)$ has distribution $\cGam_\nu$ for some $\nu=1,2,\ldots$, and also that $\{e_1,\ldots,e_\nu\}\subset L^4(\mu)$, where the orthonormal system $\{e_1,\ldots,e_\nu\}$ is defined in Proposition \ref{p:LR}-(ii). Then, there exists a finite constant $K$, independent of $n$, such that 
\begin{eqnarray*}\label{e:sss}
d_3(\widetilde{F}'_n, \cGam_\nu)&\leq& c_1A_1(\tilde{F}'_n)+c_2A_4(\tilde{F}'_n)\times A_5(\tilde{F}'_n) \leq K\times {n^{-1/4}}, \quad\\ 
\quad d_3(\widetilde{F}_n, \cGam_\nu)   &\leq& K\times {n^{-1/4}},
\end{eqnarray*}
where in the first inequality we used the notation defined in (\ref{eq:d2SteinBoundGeneral})--(\ref{e:tgv})
\end{theorem}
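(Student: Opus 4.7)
The plan is to reduce the problem to the Gamma approximation of the second-chaos projection of $\widetilde{F}'_n$, and then invoke the $q=2$ instance of Theorem \ref{thm:GammaContractions} (equivalently, Part (B) of Theorem \ref{t:gdj}). Using Lemma \ref{prop:L1L2} together with the fact that the Hoeffding rank equals $2$ (so $\EE[F'_n]=0$ and $h_1\equiv 0$), one decomposes
$$
\widetilde{F}'_n \;=\; I_2^{(n)}(f_n)+R_n,\qquad f_n:=n^{-1}h_2,\qquad R_n:=\sum_{i=3}^{k}n^{1-i}I_i^{(n)}(h_i),
$$
where $I_i^{(n)}$ denotes the multiple integral against $\hat\eta_n=\eta_n-\mu_n$. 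The chaos isometry gives $\|f_n\|_{L^2(\mu_n^{2})}^{2}=\|h_2\|_{L^2(\mu^{2})}^{2}=\nu$ (the last equality being forced by $\EE[I_2^G(h_2)^{2}]=2\nu$) together with $\EE[R_n^{2}]=\sum_{i=3}^{k}i!\,n^{2-i}\|h_i\|^{2}=O(n^{-1})$.

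Next, since $h_2=\sum_{i=1}^{\nu}e_i\otimes e_i$, one has the explicit representation $I_2^{(n)}(f_n)=n^{-1}\sum_{i=1}^{\nu}\bigl(I_1^{(n)}(e_i)^{2}-n\bigr)\geq-\nu$ almost surely, so Theorem \ref{thm:SteinBound} applies to $I_2^{(n)}(f_n)$ with $A_3\equiv 0$. Direct computation of the contractions of $f_n$, using orthonormality of $\{e_i\}$ in $L^2(\mu)$ and the hypothesis $\{e_i\}\subset L^4(\mu)$, shows that $\bigl|2\|f_n\|^{2}-2\nu\bigr|=0$; that $f_n\star_1^{1}f_n=n^{-1}h_2=f_n$, whence $\|f_n\ts_1^{1}f_n-f_n\|=0$; that $(f_n\star_2^{1}f_n)(x)=n^{-1}\sum_i e_i(x)^{2}$, so $\|f_n\star_2^{1}f_n\|^{1/2}=O(n^{-1/4})$; and that $\bigl(\int f_n^{4}\,\dint\mu_n^{2}\bigr)^{1/4}=O(n^{-1/2})$. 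Consequently the quantity $C_n$ of Theorem \ref{t:gdj}(B) is of order $n^{-1/4}$, giving
$$
d_3(I_2^{(n)}(f_n),\cGam_\nu)\;\leq\;c_1A_1(I_2^{(n)}(f_n))+c_2A_4(I_2^{(n)}(f_n))A_5(I_2^{(n)}(f_n))\;\leq\;K\,n^{-1/4}.
$$

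Since every $h\in\cH^{3}$ is $1$-Lipschitz, the triangle inequality yields
$$
d_3(\widetilde{F}'_n,\cGam_\nu)\;\leq\;d_3(I_2^{(n)}(f_n),\cGam_\nu)+\EE|R_n|\;\leq\;K\,n^{-1/4}+O(n^{-1/2})\;=\;O(n^{-1/4}),
$$
and $c_1A_1(\widetilde{F}'_n)+c_2A_4(\widetilde{F}'_n)A_5(\widetilde{F}'_n)$ inherits the same rate by applying the triangle inequality at the level of $D\widetilde{F}'_n$ and $DL^{-1}\widetilde{F}'_n$ (using $A_5(I_2^{(n)}(f_n))\to\sqrt{\nu}$ and the $L^2$-smallness of $R_n$). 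The bound for the de-Poissonized statistic $\widetilde{F}_n=n^{1-k}U_n(h,\mathbf{Y})$ then follows from the same Poissonization argument already used in the proof of Theorem \ref{t:gdj}, where the concentration of $N(n)$ around $n$ (with standard deviation $\sqrt{n}$) produces the additional $n^{-1/4}$ cost in $d_3$. The main obstacle is this last passage: one must verify that the Malliavin quantities $A_1, A_4, A_5$ evaluated on $\widetilde{F}'_n$ itself (and not just on its second-chaos truncation $I_2^{(n)}(f_n)$) satisfy the advertised $n^{-1/4}$ rate, which requires a careful chaos-by-chaos analysis of $D\widetilde{F}'_n$ and $DL^{-1}\widetilde{F}'_n$ across all orders up to $k$.
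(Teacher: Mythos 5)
Your overall strategy coincides with the paper's proof: the same decomposition $\widetilde F'_n = I_2(f_n)+R_n$ with $f_n=h_2/n$ and $\EE[R_n^2]=O(1/n)$ (the paper cites \cite[Theorem 7.3]{LRP2} for this; you derive it from Lemma \ref{prop:L1L2} and the Hoeffding-rank assumption, which is equivalent), the same contraction computations for $f_n$, the same appeal to Theorem \ref{t:gdj}-(B), and the same triangle-inequality treatment of $R_n$ and of the de-Poissonization. The gap you flag at the end --- transferring the bounds on $A_1,A_4,A_5$ from the truncation $I_2(f_n)$ to $\widetilde F'_n$ itself --- is a real subtlety, but the paper's own proof is no more detailed on this point, so I will not hold it against you.

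There is, however, one concrete error. The identity $I_2(f_n)=n^{-1}\sum_i\big(I_1(e_i)^2-n\big)$ and the ensuing almost sure bound $I_2(f_n)\geq-\nu$ are false on the Poisson space. The product formula \eqref{eq:ProductFormula} gives $I_1(e_i)^2=I_2(e_i\otimes e_i)+I_1(e_i^2)+\|e_i\|^2_{L^2(\mu_n)}$, so that
\begin{equation*}
I_2(e_i\otimes e_i)=I_1(e_i)^2-I_1(e_i^2)-n=\Big(\sum_{j\leq N(n)}e_i(Y_j)\Big)^2-\sum_{j\leq N(n)}e_i(Y_j)^2,
\end{equation*}
and the subtracted term is unbounded whenever $e_i\notin L^\infty(\mu)$ (the hypotheses only give $e_i\in L^4(\mu)$): for instance, on the event $N(n)=2$ one finds $I_2(f_n)=2n^{-1}h_2(Y_1,Y_2)$, which falls below $-\nu$ with positive probability in general. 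The clean identity you wrote is the Gaussian one, where the middle term $I_1(e_i^2)$ is absent. Consequently you cannot set $A_3(I_2(f_n))\equiv 0$ on these grounds; instead $A_3$ must be controlled via Proposition \ref{l:newbound}, exactly as in the proof of Theorem \ref{thm:GammaContractions}, where it is bounded by the same contraction quantities ($\|f_n\s_2^1f_n\|$, $\int f_n^4\,\dint\mu_n^2$, and so on) that you have already computed, and therefore still contributes at most $O(n^{-1/4})$. With this repair your argument goes through and agrees with the paper's.
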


\smallskip

\begin{example}{\rm 
\begin{itemize}

\item[(i)]\label{ex:rs1} Let $g_1, g_2$ be two orthonormal elements of $L^2(\mu)$ such that $g_1, g_2\in L^4(\mu)$ and $\int_{\mathcal{Z}} g_1(z)\,\mu(\dint z) = \int_{\mathcal{Z}} g_2(z)\,\mu(\dint z)$. We stress that we do not require that $g_1,g_2$ have disjoint supports. Then, the kernel
\begin{equation*}%\label{e:ee}
h_2(z_1,z_2) = \frac{1}{2}(g_1-g_2)\otimes (g_1-g_2)\,  (z_1,z_2) =\frac{g_1(z_1)-g_2(z_1)}{\sqrt{2}}\times \frac{g_1(z_2)-g_2(z_2)}{\sqrt{2}} 
\end{equation*}
is such that the corresponding $U$-statistics of order two $F_n := U_{n}(h_2,{\bf Y})$ and $F'_n := U_{N(n)}(h_2,{\bf Y})$ are completely degenerate, and both converge in distribution to $\cGam_1$, with an upper bound of order $n^{-1/4}$ on the rate of convergence.

\item[(ii)] As an example of a pair $(g_1,g_2)$ verifying the requirements at Point (i), one can take $g_1 = \sqrt{2}\,{\bf 1}_{A}$ and $g_2 = \sqrt{2}\,{\bf 1}_{B}$, where $\{A,B\}$ is a measurable partition of $\mathcal{Z}$ such that $\mu(A) = \mu(B) = 1/2$. Considering the case $d=1$, $p(\,\cdot\,) = \frac12\, {\bf 1}_{(-1,1)}(\,\cdot\,)$, $g_1(z) = \sqrt{2}\, {\bf 1}_{(0,1)}(z)$ and $g_2(z) = \sqrt{2}\, {\bf 1}_{(-1, 0)}(z)$, one obtains a kernel $h_2$ with support in $(-1,1)^2\backslash \{(0,0)\}$ and such that $h_2(z_1,z_2) = 1$ if $z_1z_2>0$, and $h_2(z_1,z_2) = -1$ if $z_1z_2<0$. In this way, one recovers the non-central result discussed by Reitzner and Schulte in \cite[end of Section 5.1]{ReitznerSchulte}.
\end{itemize}
}
\end{example}

\subsection{Multivariate extensions and hybrid convergence}
\label{ss:multi}
We describe here three multivariate extensions of the results in Section
\ref{ss:gen}. The first two results can be seen as partial analogues on the Poisson space of
\cite[Theorem 4.4]{NR} -- for the multivariate Gamma convergence -- and \cite[Theorem 4.5]{NR}
-- for the hybrid convergence -- both concerning sequences of
multiple integrals with respect to a Gaussian measure. Observe that the method used in \cite{NR}  is based on new criteria for asymptotic independence of
multiple integrals. These criteria are not available on the Poisson space. For this reason, our approach is different and will be in the spirit of the ``interpolation method'' used in \cite{BourPec}. Such an interpolation method will be also used to deduce our third result, concerning hybrid Poisson/Gamma convergence. As already pointed out, the general problem of characterizing independence is rather well-understood in a Gaussian
framework (cf.\ \cite{Kallenberg,UstunelZakai89,UstunelZakai90}), while the topic is still largely open in the context Poisson measures; see \cite{Privault, RosSam}. 

\smallskip

\begin{remark}\label{r:co}{\rm As before, we consider the framework of a sequence of Poisson measures $\{\eta_n : n\geq 1\}$ (on some Polish space $(\mathcal{Z},\mathscr{Z})$), each having a finite with non-atomic control measure $\mu_n$. For the entire section, $I_q$ denotes the multiple Wiener-It\^o integral, of order $q$, with respect to one of the compensated measures $\hat{\eta}_n = \eta_n - \mu_n$ (the concerned index $n$ will always coincide with the index of the integrated function, for instance: $I_q(f_n)$ indicates the multiple integral of order $q$ of $f_n$ with respect to $\hat\eta_n$).
}
\end{remark}

\smallskip

Let $d\geq 1$ be a fixed integer, let $\nu_1,\ldots,\nu_d>0$ and let
$(G_1,\ldots,G_d)$ be a vector consisting of independent random variables
such that $G_i$ has the centred Gamma distribution $\cGam_{\nu_i}$.
Further let $2\leq q_1<q_2<\ldots<q_d$ be even integers satisfying
$2q_i\neq q_j$ for any $i\neq j$ and let for $i\in\{1,\ldots,d\}$,
$\{f_n^{(i)}:n\geq 1\}$ be a sequence of kernels such that $f_n^{(i)} \in L^2_{\rm sym}(\mu_n^{q_i})$, satisfying in addition the technical conditions of Section \ref{sec:background}-(VIII). The next result deals with the announced multivariate Gamma
convergence. We emphasize that we do not need further conditions on
asymptotic covariances due to our assumption that all multiple integrals
have different orders.

\smallskip

\begin{theorem}[{\bf Multivariate Gamma convergence}]\label{thm:MultivariateGamma}
Let the above notation and conditions prevail. For any
$i\in\{1,\ldots,d\}$ assume that
$\lim\limits_{n\to\infty}q_i!\|f^{(i)}_n\|^2=2\nu_i$ and that
\begin{equation*}\label{eq:GammaContractionsConditionMultivariate}
\lim_{n\to\infty}\|f_n^{(i)}\s_{r_i}^{\ell_i}
f_n^{(i)}\|=0\quad{and}\quad\lim_{n\to\infty}\|f_n^{(i)}\ts_{q_i/2}^{q_i/2}f_n^{(i)}-c_{q_i}f_n^{(i)}\|=0\quad{with}\quad
c_{q_i}={4\over \left({q_i\over 2}\right)!{q_i\choose q_i/2}^2}
\end{equation*}
for all pairs $(r_i,\ell_i)$ such that either $r_i=q_i$ and $\ell_i=0$, or $r_i\in\{1,\ldots,q_i\}$, $\ell_i\in\{1,\ldots,\min(r_i,q_i-1)\}$
and $r_i$ and $\ell_i$ not equal to $q_i/2$ at the same time. Then,
$\big(I_{q_1}(f_n^{(1)}),\ldots,I_{q_d}(f_n^{(d)})\big)$ converges in
distribution to $\big(G_1,\ldots,G_d\big)$ as $n\to\infty$.
\end{theorem}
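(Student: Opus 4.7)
My plan is to deduce the multivariate convergence from the one-dimensional Gamma convergence already granted by Theorem \ref{thm:GammaContractions}, by showing that the components $F_n^{(i)}:=I_{q_i}(f_n^{(i)})$ become asymptotically independent. Since there is no analogue of the Kallenberg / \"Ust\"unel--Zakai criterion for independence on the Poisson space, I cannot rely on direct chaotic arguments; instead, I would follow the interpolation strategy of Bourguin--Peccati \cite{BourPec}. The goal is to prove that, for every test function $\phi\in\mathcal{C}^3(\RR^d)$ with first three derivatives uniformly bounded, the quantity $|\EE[\phi(F_n^{(1)},\ldots,F_n^{(d)})]-\EE[\phi(G_1,\ldots,G_d)]|$ tends to zero as $n\to\infty$, where $(G_1,\ldots,G_d)$ is the independent target Gamma vector; by an approximation argument this implies joint convergence in distribution.

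I would introduce a smooth interpolation of $\phi$ based on the product of the one-dimensional semigroups $P_t^{\nu_i}$ associated with the Gamma Stein equations (whose invariant measures are $\cGam_{\nu_i}$). Differentiating the interpolant in $t$ and performing integration by parts in the Malliavin sense on the Poisson space, the difference $\EE[\phi(F_n^{(1)},\ldots,F_n^{(d)})]-\EE[\phi(G_1,\ldots,G_d)]$ decomposes into three families of contributions: (i) diagonal Stein terms for each component, of the form $\EE[(2(F_n^{(i)}+\nu_i)-\langle DF_n^{(i)},-DL^{-1}F_n^{(i)}\rangle)\Psi]$ with $\Psi$ uniformly bounded; (ii) second-order Malliavin remainders for each component, controlled by the quantities $A_2(F_n^{(i)})$ and $A_3(F_n^{(i)})$ in the notation of Theorem \ref{thm:SteinBound}; and (iii) off-diagonal cross terms $\EE[\langle DF_n^{(i)},-DL^{-1}F_n^{(j)}\rangle\Psi_{ij}]$ for $i\neq j$, with $\Psi_{ij}$ uniformly bounded. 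The terms in (i) and (ii) are exactly those handled in the proof of Theorem \ref{thm:GammaContractions}, and by the observation recorded in the first remark following that theorem -- namely, that $A_1(F_n^{(i)}), A_3(F_n^{(i)}), A_4(F_n^{(i)})\to 0$ while $A_5(F_n^{(i)})$ remains bounded -- they vanish in the limit.

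The main obstacle is the control of the cross terms of type (iii), which is where the asymptotic independence is actually produced. The product formula for multiple Poisson integrals expresses $\langle DF_n^{(i)},-DL^{-1}F_n^{(j)}\rangle$ as a finite linear combination of multiple Wiener--It\^o integrals whose kernels are symmetrizations of the mixed contractions $f_n^{(i)}\star_r^\ell f_n^{(j)}$, for $1\le r\le\min(q_i,q_j)$ and $0\le\ell\le r$. A Cauchy--Schwarz inequality on the relevant product $L^2(\mu_n^s)$-spaces then bounds each mixed contraction norm by the geometric mean of two purely diagonal contraction norms, one involving $f_n^{(i)}$ alone and the other $f_n^{(j)}$ alone. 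The hypothesis $2q_i\neq q_j$ for $i\neq j$ is used precisely to rule out the combinatorial pairs $(r,\ell)$ for which these diagonal factors would degenerate into the trivial norm $\|f_n^{(k)}\|^2=\|f_n^{(k)}\star_{q_k}^{q_k}f_n^{(k)}\|$ (which converges to $2\nu_k/q_k!\neq 0$); for every remaining pair, the diagonal factors are contractions that vanish by hypothesis (\ref{eq:GammaContractionsCondition}), so that each cross term tends to zero. Combining contributions (i), (ii) and (iii) yields $|\EE[\phi(F_n^{(1)},\ldots,F_n^{(d)})]-\EE[\phi(G_1,\ldots,G_d)]|\to 0$ for every admissible $\phi$, giving the claimed joint convergence.
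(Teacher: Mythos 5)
Your proposal follows essentially the same route as the paper: an interpolation argument in the spirit of Bourguin--Peccati yielding a bound by diagonal Stein/remainder terms plus off-diagonal cross terms, with the cross contractions $\|f_n^{(i)}\star_r^\ell f_n^{(j)}\|$ controlled via Cauchy--Schwarz by the diagonal ones and the hypothesis $2q_i\neq q_j$ excluding the degenerate pairs. The only small imprecision is that the second-order Taylor remainders also produce \emph{cross} terms of the form $\EE\int|D_zF_n^{(i)}|^2|D_zL^{-1}F_n^{(j)}|\,\mu_n(\dint z)$ with $i\neq j$, not just per-component ones, but these are handled exactly as you indicate, by Cauchy--Schwarz together with $A_4(F_n^{(i)})\to 0$ and the boundedness of $A_5(F_n^{(j)})$.
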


We go one step further and turn to an extension of Theorem
\ref{thm:MultivariateGamma} where we consider convergence of a random
vector of multiple integrals to a hybrid random vector whose components
are independent and in part centered Gamma and in part standard Gaussian
random variables. A similar setting with Poisson random variables instead
of centred Gamma ones has recently been studied in \cite{BourPec}. However, we would like to emphasize that, in contrast to \cite{BourPec}, here both distributions considered in the target vector are absolutely continuous with respect to the Lebesgue measure on the real line.

\smallskip

To formulate our result on the Gamma/Gaussian hybrid convergence, let
$d_1,d_2\geq 1$ be fixed integers, let $\nu_1,\ldots,\nu_{d_1}>0$ and let
$(G_1,\ldots,G_{d_1},N_{d_1+1},\ldots,N_{d_1+d_2})$ be a vector consisting
of independent random variables such that $G_i$ has distribution
$\cGam_{\nu_i}$ for $i\in\{1,\ldots,d_1\}$ and $N_i$ has a standard
Gaussian distribution for $i\in\{d_1+1,\ldots,d_1+d_2\}$. Further let
$2\leq q_1,q_2,\ldots,q_{d_1+d_2}$ be integers such that the following constraints are verified: (a) $q_1<\cdots <q_{d_1}$ and, in general, $q_i\neq q_j$ for every $1\leq i\neq j\leq d_1+d_2$, (b) $q_1,\ldots,q_{d_1}$ are even integers, and (c) there is no pair $(i,j) \in \{1,\ldots,d_1+d_2\}^2$ such that $i\in \{1,\ldots,d_1\}$ and $2q_i = q_j$. For
$i\in\{1,\ldots,d_1+d_2\}$, we let $\{f_n^{(i)}:n\geq 1\}$ be a sequence of
symmetric and square-integrable kernels such that $f_n^{(i)}\in L^2_{\rm sym}(\mu_n^{q_i})$, satisfying moreover the technical conditions stated in Section \ref{sec:background}-(VIII).

\smallskip

\begin{theorem}[{\bf Gamma/Normal hybrid convergence}]\label{thm:HybridConvergence}
Let the above notation and conditions prevail. Assume that
$\lim\limits_{n\to\infty}q!\|f^{(i)}_n\|^2=2\nu_i$ whenever
$i\in\{1,\ldots,d_1\}$ and
that $\lim\limits_{n\to\infty}q_i!\|f^{(i)}_n\|^2=1$ whenever
$i\in\{d_1+1,\ldots,d_2\}$. Furthermore, for any $i\in\{1,\ldots,d_1\}$
suppose that
\begin{equation}\label{eq:GammaContractionsConditionHybrid}
\lim_{n\to\infty}\|f_n^{(i)}\s_{r_i}^{\ell_i}
f_n^{(i)}\|=0\quad{and}\quad\lim_{n\to\infty}\|f_n^{(i)}\ts_{q_i/2}^{q_i/2}f_n^{(i)}-c_{q_i}f_n^{(i)}\|=0\quad{with}\quad
c_{q_i}={4\over \left({q_i\over 2}\right)!{q_i\choose q_i/2}^2}
\end{equation}
for all  pairs $(r_i,\ell_i)$ such that either $r_i=q_i$ and $\ell_i=0$, or $r_i\in\{1,\ldots,q_i\}$, $\ell_i\in\{1,\ldots,\min(r_i,q_i-1)\}$
and $r_i$ and $\ell_i$ not equal to $q_i/2$ at the same time. For
$i\in\{d_1+1,\ldots,d_1+d_2\}$ suppose that
\begin{equation}\label{eq:NormalContractionsConditionHybrid}
\lim_{n\to\infty}\|f_n^{(i)}\s_{r_i}^{\ell_i} f_n^{(i)}\|=0
\end{equation}
for all $r_i\in\{1,\ldots,q_i\}$ and
$\ell_i\in\{1,\ldots,\min(r_i,q_i-1)\}$. Then
$\big(I_{q_1}(f_n^{(1)}),\ldots,I_{q_{d_1+d_2}}(f_n^{(d_1+d_2)})\big)$
converges in distribution to
$\big(G_1,\ldots,G_{d_1},N_{d_1+1},\ldots,N_{d_1+d_2}\big)$ as
$n\to\infty$.
\end{theorem}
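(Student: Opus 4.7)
\medskip
\noindent\textbf{Proof proposal for Theorem \ref{thm:HybridConvergence}.}
The plan is to adapt the interpolation approach of \cite{BourPec} to the present situation, in which the target vector has independent Gamma and Gaussian marginals. Let $\mathbf{F}_n:=(I_{q_1}(f_n^{(1)}),\ldots,I_{q_{d_1+d_2}}(f_n^{(d_1+d_2)}))$ and $\mathbf{Z}:=(G_1,\ldots,G_{d_1},N_{d_1+1},\ldots,N_{d_1+d_2})$. Working on a sufficiently rich class of smooth test functions $\phi\in\cC^3(\RR^{d_1+d_2})$ with bounded partial derivatives up to order $3$, the first move is to express $\EE[\phi(\mathbf{F}_n)]-\EE[\phi(\mathbf{Z})]$ as a telescoping sum indexed by the coordinates $i=1,\ldots,d_1+d_2$. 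Along such a telescope, at the $i$-th step one replaces $I_{q_i}(f_n^{(i)})$ by the corresponding limiting variable; because the limits are independent, one may condition on the already-replaced coordinates and reduce to a one-dimensional approximation problem, where the relevant Stein equation is that of $\cGam_{\nu_i}$ if $i\leq d_1$ and that of the standard normal if $i>d_1$.

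The second step consists in controlling each telescoping increment by a Malliavin-operator estimate of the same flavour as Theorem \ref{thm:SteinBound}. For $i\leq d_1$, the bound involves precisely the Malliavin quantities $A_1(I_{q_i}(f_n^{(i)}))$, $A_3(I_{q_i}(f_n^{(i)}))$ and $A_4(I_{q_i}(f_n^{(i)}))\times A_5(I_{q_i}(f_n^{(i)}))$, all of which converge to zero by Theorem \ref{thm:GammaContractions}, thanks to hypothesis \eqref{eq:GammaContractionsConditionHybrid}. For $i>d_1$, the bound involves the normal-approximation quantities from \cite{PSTU2010}, which converge to zero under \eqref{eq:NormalContractionsConditionHybrid}. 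The additional partial derivative in the other coordinates is uniformly bounded by the hypotheses on $\phi$, so the replacement errors along the telescope can indeed be dominated by the one-dimensional Malliavin quantities (multiplied by a sup-norm of partial derivatives of $\phi$ that is independent of $n$).

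The third step is the verification of \emph{asymptotic independence} between the marginals, which is what makes the telescoping sequential replacement legitimate. Concretely, along the interpolation one meets cross terms of the form $\EE[\langle DI_{q_i}(f_n^{(i)}),-DL^{-1}I_{q_j}(f_n^{(j)})\rangle\cdot \Psi]$, for $i\neq j$ and $\Psi$ a bounded function of the remaining components. Using $-DL^{-1}I_{q_j}(f_n^{(j)})=I_{q_j-1}(f_n^{(j)})$ and the multiplication formula for Poisson multiple integrals, one expands the inner product into a linear combination of multiple integrals whose $L^2$-norms are controlled by contractions $\|f_n^{(i)}\s_r^\ell f_n^{(j)}\|$. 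Because $q_i\neq q_j$ and, for the Gamma indices, $2q_i\neq q_j$, each such contraction is dominated, via Cauchy--Schwarz and the general inequalities for star contractions, by products of the diagonal contractions $\|f_n^{(i)}\s_r^\ell f_n^{(i)}\|$ and $\|f_n^{(j)}\s_{r'}^{\ell'} f_n^{(j)}\|$ already known to vanish in the limit; the non-resonance assumption $2q_i\neq q_j$ is what excludes the otherwise problematic case in which a pure square like $\|f_n^{(i)}\ts_{q_i/2}^{q_i/2} f_n^{(i)}-c_{q_i}f_n^{(i)}\|$ would survive as a constant proportional to $I_{q_i}(f_n^{(i)})$ and produce a non-vanishing cross term with $I_{q_j}(f_n^{(j)})$ of order $q_j=2q_i$.

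The hardest point in the argument is the consistent handling of the Gamma Stein equation, whose solution is only $\cC^1$-smooth away from $-\nu_i$: at the $i$-th replacement step one inherits, on top of the standard Malliavin expressions, the correction term $A_3(I_{q_i}(f_n^{(i)}))$, which then has to be estimated via Proposition \ref{l:newbound}, exploiting the finiteness of $\mu_n$. Combining the three steps above with the convergence $A_5(I_{q_i}(f_n^{(i)}))^2=(q_i-1)!\|f_n^{(i)}\|^2\to 2\nu_i/q_i$ (resp.\ $\to 1/q_i$), one concludes that $\EE[\phi(\mathbf{F}_n)]\to\EE[\phi(\mathbf{Z})]$ for every test function $\phi$ in the chosen class, hence convergence in distribution of $\mathbf{F}_n$ to $\mathbf{Z}$, as required.
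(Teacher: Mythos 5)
Your proposal follows essentially the same route as the paper: an interpolation/telescoping argument in the spirit of Bourguin--Peccati that reduces each coordinate to a one-dimensional Stein--Malliavin bound (Theorem \ref{thm:SteinBound} together with Proposition \ref{l:newbound} for the Gamma coordinates, the normal-approximation bounds of \cite{PSTU2010} for the Gaussian ones), combined with control of the cross terms $\EE|\langle DF_n^{(i)},DL^{-1}F_n^{(j)}\rangle|$ and $\EE\int_{\sZ}|D_zF_n^{(i)}|^2|D_zL^{-1}F_n^{(j)}|\,\mu_n(\dint z)$ via the product formula, the contraction inequality of \cite[Lemma 2.9]{PecZheng} and the non-resonance assumptions on the orders $q_i$ --- which is precisely the content of the paper's Lemmas \ref{lem:MultivariateHelp} and \ref{lem:HybridHelp}. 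The only cosmetic difference is that the paper packages the telescoping estimate once and for all (bounding the distance by $\sum_i\alpha_n^{(i)}+\sum_i\delta_n^{(i)}+\sum_{i\neq j}(\beta_n^{(i,j)}+\gamma_n^{(i,j)})$) and then verifies separately that each of these quantities vanishes, whereas you interleave the two steps.
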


\begin{example}\label{ex:lilly}{\rm We illustrate Theorem \ref{thm:HybridConvergence} with an example related to the theory of random graphs; the reader is referred to \cite{Pen03} for an introduction to this topic. Note that we will allow the underlying Poisson measure to depend on $n$; see Remark \ref{r:co}. Let $d\geq 1$, and define ${\bf Y}$ and $\eta_n$ as in Section \ref{ss:dejong}. Let $\{r_n : n\geq 1\}$ be a sequence of strictly positive numbers decreasing to zero. For every $n$, we define $D_n := (V_n,E_n)$ to be the random `disk graph' obtained as follows: $V_n = \{Y_i : i=1,\ldots,N(n)\}$ and two vertices $Y_i, Y_j\in V_n$ are connected by an edge if and only if their Euclidean distance is strictly positive and less than $r_n$ (in particular, $D_n$ has no loops). Now let $\Lambda$ be a feasible connected graph (in the sense of \cite{BourPec,LRP2,Pen03}) with $q$ vertices, where $q\neq 2,4$. For every $n$, we define $L_n$ to be the random variable equal to the number of induced subgraphs of $D_n$ that are isomorphic to $\Lambda$, that is, $L_n$ is equal to the number of subsets of the type $Y_{(q)} =\{Y_{i_1},\ldots,Y_{i_q}\}\subset V_n$ such that the restriction of $D_n$ to $Y_{(q)}$ is isomorphic to $\Lambda$. We also set $\widetilde{L}_n = (L_n - \EE[L_n])/{\rm Var}(L_n)^{1/2}$. Now assume that $nr_n^d\to 0$ and $n^q(r_n^d)^{q-1} \to \infty$. According to the discussion contained e.g.\ in \cite[Chapter 3]{Pen03} or \cite[Section 3]{LRP2}, one has that the following four facts are in order: (i) $\EE[L_n] \approx K_0 n^{q}(r_n^d)^{q-1}$ (for some finite constant $K_0>0$), (ii) ${\rm Var}(L_n) \approx K_1 n^{q}(r_n^d)^{q-1}$ (for some finite constant $K_1>0$), (iii) there exists a sequence of multiple integrals of order $q$ with respect to $\hat{\eta}_n = \eta_n - \mu_n$, say $I_q(f_n)$, such that, as $n\to\infty$, $\EE[(\widetilde{L}_n - I_q(f_n))^2]\to 0$, and (iv) the kernels $\{f_n\}$ verify the asymptotic relation (\ref{eq:NormalContractionsConditionHybrid}) (where, for every $n$, the 
contractions and norms have to be considered with respect to the measure $\mu_n$), so that $\widetilde{L}_n$ converges in distribution to a standard Gaussian random variable as $n\to\infty$. Now consider a sequence $\{F'_n\}$ of degenerate $U$-statistics of order 2 as in Theorem \ref{t:gammaU} (for instance, those appearing in Example \ref{ex:rs1}). Since each $F'_n$ is a double Wiener-It\^o integral (with respect to $\eta_n$) verifying condition (\ref{eq:GammaContractionsConditionHybrid}) and $q\neq 4$, we can directly apply Theorem \ref{thm:HybridConvergence} in the case $d_1=d_2 =1$, $q_1=2$ and $q_2 = q$, and conclude that, as $n\to \infty$, the pair $(F'_n , \tilde{L}_n)$ converges in distribution to a vector $(G,N)$ composed of independent random variables such that $G$ has distribution $\cGam_\nu$ and $N$ follows a standard Gaussian random variable.}
\end{example}

We finally show how one can use the results of the present paper to deal with a hybrid Poisson/Gamma convergence (we just consider two-dimensional vectors in order to simplify the discussion, but there is no additional difficulty in considering vectors of higher dimensions).
Let $\nu, \lambda >0$ and let
$(G,P)$ be a vector consisting
of independent random variables such that $G$ has distribution
$\cGam_{\nu}$ and $P$ has a Poisson distribution with mean $\lambda$. We fix an even integer $q\geq 2$, and consider a sequence $\{f_n :n\geq 1\}$ of kernels with $f_n\in L^2_{\rm sym}(\mu_n^{q})$ and such that the technical conditions stated in Section \ref{sec:background}-(VIII) are satisfied. We also consider a sequence $\{H_n :n\geq 1\}$ of random variables such that: (a) each $H_n$ is a functional of the Poisson measure $\eta$, which is in the domain of the Malliavin derivative $D$ and takes values in $\mathbb{Z}_+ = \{0,1,2,\ldots\}$, (b) the numerical sequence
\begin{equation}\label{e:non}
n\mapsto \EE \int_{\mathcal{Z}} (D_zH_n)^2\, \mu_n(\dint z), \qquad n\geq 1,
\end{equation}
is bounded.

\begin{theorem}[{\bf Gamma/Poisson hybrid convergence}]\label{thm:HybridPGConvergence}
Assume that
$\lim\limits_{n\to\infty}q!\|f_n\|^2=2\nu$ and 
\begin{equation}\label{eq:GPH}
\lim_{n\to\infty}\|f_n\s_{r}^{\ell}
f_n\|=0\quad{and}\quad\lim_{n\to\infty}\|f_n\ts_{q/2}^{q/2}f_n-c_{q}f_n\|=0\quad{with}\quad
c_{q}={4\over \left({q\over 2}\right)!{q\choose q/2}^2}
\end{equation}
for all pairs $(r,\ell)$ such that either $r=q$ and $\ell=0$, or $r\in\{1,\ldots,q\}$, $\ell\in\{1,\ldots,\min(r,q-1)\}$
and $r$ and $\ell$ not equal to $q/2$ at the same time. We also assume that, as $n\to\infty$,
\begin{equation}\label{e:hn}
\begin{split}
\EE[H_n]\to\lambda,\qquad &\EE\left| \lambda -\langle DH_n, -DL^{-1}H_n\rangle\right|\to 0,\\ &\mbox{and}\qquad\EE\int_{\mathcal{Z}}\left| D_zH_n (D_zH_n-1) D_zL^{-1}H_n\right|\,\mu_n(\dint z)\to 0.
\end{split}
\end{equation}
Then $\big(I_{q}(f_n),H_n \big)$ converges in distribution to
$\big(G,P\big)$ as $n\to\infty$.
\end{theorem}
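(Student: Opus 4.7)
The strategy is to adapt the \emph{interpolation method} of \cite{BourPec}, replacing the Gaussian Stein equation used there by the Gamma Stein equation underlying Theorem \ref{thm:SteinBound}. By a standard density argument, it suffices to bound $\bigl|\EE[\varphi_1(I_q(f_n))\varphi_2(H_n)]-\EE[\varphi_1(G)]\EE[\varphi_2(P)]\bigr|$ for tensor-product test functions $\varphi(x,y)=\varphi_1(x)\varphi_2(y)$, with $\varphi_1\in\cH^3$ and $\varphi_2:\ZZ_+\to\RR$ bounded. The first step is to introduce the two marginal Stein solutions: $f_1$ solving $2(x+\nu)_+f_1'(x)-xf_1(x)=\varphi_1(x)-\EE[\varphi_1(G)]$ and $g_2$ solving $\lambda g_2(y+1)-yg_2(y)=\varphi_2(y)-\EE[\varphi_2(P)]$, both enjoying classical uniform bounds on $f_1,f_1'$ and $g_2,\Delta g_2:=g_2(\cdot+1)-g_2(\cdot)$, respectively (see Section \ref{sec:background}-(V) and \cite{PecPoisson}).

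Multiplying out the two Stein identities produces the exact decomposition
\begin{equation*}
\EE[\varphi_1(I_q(f_n))\varphi_2(H_n)]-\EE[\varphi_1(G)]\EE[\varphi_2(P)]=T_n^{(1)}+T_n^{(2)}+T_n^{(3)},
\end{equation*}
where $T_n^{(1)}=\EE[\varphi_2(P)]\cdot\EE\!\bigl[2(I_q(f_n)+\nu)_+f_1'(I_q(f_n))-I_q(f_n)f_1(I_q(f_n))\bigr]$, $T_n^{(2)}=\EE[\varphi_1(G)]\cdot\EE[\lambda g_2(H_n+1)-H_ng_2(H_n)]$, and $T_n^{(3)}=\EE\!\bigl[\bigl(2(I_q(f_n)+\nu)_+f_1'(I_q(f_n))-I_q(f_n)f_1(I_q(f_n))\bigr)\bigl(\lambda g_2(H_n+1)-H_ng_2(H_n)\bigr)\bigr]$ is the cross term. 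The term $T_n^{(1)}$ vanishes by Theorem \ref{thm:SteinBound} combined with Theorem \ref{thm:GammaContractions} under hypothesis \eqref{eq:GPH}; similarly, $T_n^{(2)}\to 0$ is a direct consequence of the Malliavin-Chen-Stein Poisson bound of \cite{PecPoisson} under the three conditions in \eqref{e:hn}.

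The heart of the proof, and the main obstacle, is to show that $T_n^{(3)}\to 0$. By re-applying the Stein identities backwards one recognises $T_n^{(3)}=\textup{Cov}(\varphi_1(I_q(f_n)),\varphi_2(H_n))$ up to the product of two null sequences coming from the marginal convergences, so the task reduces to proving the asymptotic independence $\textup{Cov}(\varphi_1(I_q(f_n)),\varphi_2(H_n))\to 0$. My plan is to establish this by a double Malliavin integration by parts, writing $I_q(f_n)=\delta(-DL^{-1}I_q(f_n))$ and $H_n-\EE[H_n]=\delta(-DL^{-1}H_n)$ and systematically expanding the iterated divergence via the Poisson Leibniz rule $D_z(FG)=F\,D_zG+G\,D_zF+D_zF\cdot D_zG$. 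The resulting remainders split into three families: (i) purely Gamma-type expressions controlled by \eqref{eq:GPH}, (ii) purely Poisson-type expressions controlled by \eqref{e:hn}, and (iii) genuinely mixed interaction terms. The delicate ones are those in class (iii): each is estimated by Cauchy-Schwarz against $\sqrt{\EE\int(D_zI_q(f_n))^2\,\mu_n(\dint z)}=\sqrt{q\cdot q!\,\|f_n\|^2}\to\sqrt{2q\nu}$ and $\sqrt{\EE\int(D_zH_n)^2\,\mu_n(\dint z)}$ (bounded thanks to the key assumption \eqref{e:non}), multiplied by extra factors that tend to zero because of the fundamental cancellation $\langle DH_n,-DL^{-1}H_n\rangle\to\lambda$ in $L^1$ supplied by \eqref{e:hn}. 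The role of the boundedness condition \eqref{e:non} is precisely to keep the mixed $L^2$-norms finite, so that the small factors provided by the marginal structural conditions do the work.
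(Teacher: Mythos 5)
Your overall architecture (a bivariate Stein-type bound consisting of two marginal terms plus a cross term, with the marginals killed by Theorem \ref{thm:GammaContractions} on the Gamma side and by the Malliavin--Chen--Stein Poisson bound on the other) matches the paper's proof, which bounds $\big|\EE[\phi(I_q(f_n),H_n)]-\EE[\phi(G,P)]\big|$ by $K(A_n+B_n+C_n+D_n)$ with $A_n,B_n$ the marginal quantities, $C_n=\EE[\langle |DH_n|,|DI_q(f_n)|\rangle]$ and $D_n=\EE\int_{\sZ}(D_z{\bf 1}_{\{I_q(f_n)>-\nu\}})(D_zI_q(f_n))|D_zL^{-1}H_n|\,\mu_n(\dint z)$; your algebraic decomposition into $T_n^{(1)}+T_n^{(2)}+T_n^{(3)}$ is correct and reducing $T_n^{(3)}$ to a covariance is fine. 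The genuine gap is in your treatment of the mixed interaction terms. You propose to estimate them by Cauchy--Schwarz against $\bigl(\EE\int_{\sZ}(D_zI_q(f_n))^2\,\mu_n(\dint z)\bigr)^{1/2}\to\sqrt{2q\nu}$ and $\bigl(\EE\int_{\sZ}(D_zH_n)^2\,\mu_n(\dint z)\bigr)^{1/2}=O(1)$, and to extract the required smallness from ``extra factors'' supplied by $\EE\big|\lambda-\langle DH_n,-DL^{-1}H_n\rangle\big|\to 0$. This cannot work: that hypothesis expresses convergence of a functional of $H_n$ \emph{alone} to the nonzero constant $\lambda$ and carries no information about the interaction between $DH_n$ and $DI_q(f_n)$, while the plain Cauchy--Schwarz bound is only $O(1)$, not $o(1)$, because $\EE\int_{\sZ}(D_zI_q(f_n))^2\,\mu_n(\dint z)=q\,q!\|f_n\|^2\to 2q\nu>0$. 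Your class (iii) estimate therefore stalls at a bounded, non-vanishing quantity.

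The mechanism that actually makes the cross term vanish is different and uses both the integer-valuedness of $D_zH_n$ and a \emph{fourth}-power quantity on the Gamma side. By H\"older's inequality with exponents $(4,4/3)$,
$$
C_n\leq \Bigl(\EE\int_{\sZ}(D_zI_q(f_n))^4\,\mu_n(\dint z)\Bigr)^{1/4}\Bigl(\EE\int_{\sZ}|D_zH_n|^{4/3}\,\mu_n(\dint z)\Bigr)^{3/4},
$$
and since $D_zH_n$ takes values in $\ZZ$ one has $|D_zH_n|^{4/3}\leq(D_zH_n)^2$, so the second factor is bounded by assumption \eqref{e:non}; the first factor equals $A_4(I_q(f_n))^{1/2}$, which tends to zero under the contraction conditions \eqref{eq:GPH} by Lemma \ref{lem:GammaStep2} (the fourth-moment quantity vanishes even though the second-moment quantity does not). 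The residual term $D_n$, coming from the non-differentiability of the Gamma Stein solution at $-\nu$ (which your sketch does not isolate), is handled by the same circle of estimates. In short, all the smallness of the interaction terms is extracted from the Gamma marginal through $\EE\int_{\sZ}(D_zI_q(f_n))^4\,\mu_n(\dint z)\to 0$, not from the Poisson hypotheses \eqref{e:hn}; without this asymmetric H\"older step your argument does not close.
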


\begin{example}\label{ex:lilly2}{\rm We consider the same framework and notation as in Example \ref{ex:lilly}. Here, we take $q\geq 2$ to be a general integer (which can be possibly equal to 2 or 4), whereas $\Lambda$ is a feasible connected graph of order $q$. We stress that, for every $n$, the random variable $L_n$ is a functional of the Poisson measure $\eta_n$ on $\RR^d$, whose control measure is given by $\mu_n(\dint x) = np(x)\dint x$. We put $r_n = n^{-dq/(q-1)}$, in such a way that $n^q(r_n^d)^{q-1} = 1$. According e.g.\ to the analysis contained in \cite[Chapter 3]{Pen03} or \cite[Section 2.4]{BourPec}, one has that the following two facts are in order: (i) there exists a constant $\lambda >0$ such that $\EE[L_n] \approx {\rm Var}(L_n)\approx \lambda n^{q}(r_n^d)^{q-1} = \lambda$, and (ii) the sequence $H_n =L_n$ satisfies (\ref{e:non}), as well as the asymptotic relations (\ref{e:hn})  (here, for every $n$, the Malliavin operators are defined with respect to the ran
 dom measure $\eta_n$ and the 
inner products and integrals are obtained by integrating with respect to $\mu = \mu_n$), so that ${L}_n$ converges in distribution to a Poisson random variable with mean $\lambda$. Considering a sequence $\{F'_n\}$ of degenerate $U$-statistics of order 2 as in Theorem \ref{t:gammaU} (see e.g.\ Example \ref{ex:rs1}), one has that each $F'_n$ is a double integral verifying condition (\ref{eq:GPH}). We can therefore apply Theorem \ref{thm:HybridPGConvergence} and infer that, as $n\to \infty$, the pair $(F'_n , {L}_n)$ converges in distribution to a vector $(G,P)$ composed of independent random variables such that $G$ is distributed according to $\cGam_\nu$ and $P$ has a Poisson distribution with mean $\lambda$.}
\end{example}

\begin{example}\label{ex:lilly3}\rm Let us consider a Poisson measure $\eta_n$ of $k$-dimensional flats in $\RR^d$ with $2k<d$ (where the flats are suitably parameterized to fit into our framework). We assume that the distribution of $\eta_n$ is invariant under rigid motions for each $n$, and that $\eta_n$ has intensity $n\geq 1$. Let us fix a closed convex set $W\subset\RR^d$ with volume one and define the distance $\dist_W(E,F)$ of two $k$-flat $E,F$ as the minimum over the Euclidean distances of $x_E\in E\cap W$ and $x_F\in F\cap W$. By $M_n$ we denote the number of pairs $(E,F)$ of distinct flats of $\eta_n$ such that $\dist_W(E,F)\leq r_n$, where $r_n=n^{-2/(d-2k)}$. According to Theorem 2.1 in \cite{ST12} we know that (i) there exists a constant $0<\lambda<\infty$ (depending on $d,k$ and $W$) such that $\EE[M_n]\approx\VV M_n\approx \lambda$, (ii) the asymptotic relations \eqref{e:hn} are satisfied, and (iii) $M_n$ fulfills the technical condition \eqref{e:non}. Thus, $M_n$ converges in distribution to a Poisson random variable with mean $\lambda$. Let now $\{F'_n\}$ be a sequence of degenerate $U$-statistics of order 2 as in Theorem \ref{t:gammaU}; see Example \ref{ex:rs1}. Then, as in the previous example, each $F'_n$ is a double integral such that condition (\ref{eq:GPH}) is verified. Thus, Theorem \ref{thm:HybridPGConvergence} can be applied to show that the random vector $(F_n',M_n)$ converges in distribution to a random vector $(G,P)$ with independent components such that $G$ has distribution $\cGam_\nu$ and $P$ has a Poisson distribution with mean $\lambda$.
\end{example}

\begin{example}\label{ex:lilly4} \rm Let $W\subset\RR^d$ be a closed convex set with volume one and let ${\bf Y}$ be a sequence of i.i.d.\ points in $W$, which are uniformly distributed and whose random number $N(n)$ follows a Poisson distribution with parameter $n\in\NN$. Any $d+1$ distinct points of $\bf Y$ form a non-degenerate random simplex in $W$. Define $r_n:=n^{-(d+1)}$ and let $V_n$ be the total number of such simplices whose volume does not exceed $r_n$. Then (i) there exists $0<\lambda<\infty$ (depending on $d$ and $W$) such that $\EE[V_n]\approx\VV V_n\approx \lambda$, (ii) the asymptotic relations \eqref{e:hn} are satisfied, and (iii) $V_n$ fulfills the technical condition \eqref{e:non} so that the law of $V_n$ converges, as $n\to\infty$ to a Poisson distribution with mean $\lambda$. This can be seen from Theorem 2.5 in \cite{ST12}. Define the sequence $\{F'_n\}$ of degenerate $U$-statistics as in Example \ref{ex:rs1} or, more generally, as in Theorem \ref{t:gammaU}. Then, following the same 
line of reasoning as as above, Theorem \ref{thm:HybridPGConvergence} can be applied to show that the random vector $(F_n',V_n)$ converges in distribution to a random vector $(G,P)$ with independent components such that $G$ has distribution $\cGam_1$ and $P$ has a Poisson distribution with mean $\lambda$.
\end{example}

\section{Background material}\label{sec:background}

In this section we collect definitions and results that are needed in the statements and proofs of our results. For more details, we refer to the monographs \cite{PeccatiTaqquBook, privaultbook} or to the papers \cite{LastPenrose,NualartVives}.

\paragraph{(I) Poisson measures.}
We shall denote by $\eta$ a Poisson measure with non-atomic and $\sigma$-finite control measure $\mu$ on some Polish space $\sZ$ (which is endowed with the Borel $\sigma$-field $\cZ$). Recall that $\eta$ is a collection $\{\eta(B):B\in\cZ_0\}$ of random variables indexed by the members of $\cZ_0=\{B\in\cZ:\mu(B)<\infty\}$ such that: (a) $\eta(B)$ follows a Poisson distribution with mean $\mu(B)$ for all $B\in\cZ_0$, and (b) whenever $A,B\in\cZ_0$ are disjoint, $\eta(A)$ and $\eta(B)$ are independent random variables. By $P_\eta$ we will denote the distribution of $\eta$ (on the space of $\sigma$-finite counting measures on $\sZ$).

\paragraph{(II) $L^2$-spaces.}
For $q\geq 1$ we denote by $L^2(\mu^q)$ the $L^2$-space $L^2(\sZ^q,\cZ^{\otimes q},\mu^q)$ and by $L_{\rm sym}^2(\mu^q)$ the subspace of $L^2(\mu^q)$ consisting of functions that are $\mu^q$-a.e.\ invariant under permutations of its arguments, so called symmetric functions. Suppressing the dependency on $q$, the scalar product and the norm in $L^2(\mu^q)$ (and $L_{\rm sym}^2(\mu^q)$) are denoted by $\langle\,\cdot\,,\,\cdot\,\rangle$ and $\|\,\cdot\,\|$, respectively. In addition, we let $L^2(P_\eta)$ be the space of square-integrable functionals of $\eta$. To avoid confusion we will use capitals to indicate elements of $L^2(P_\eta)$ and lower cases for elements of $L^2(\mu^q)$ or $L_{\rm sym}^2(\mu^q)$. We finally introduce the space $L^2(\PP,L^2(\mu))=L^2(\Omega\times\sZ,\cF\otimes\cZ,\PP\otimes\mu)$ as the space of jointly measurable mappings $u:\Omega\times\sZ\to\RR$ such that $\EE\int_{\sZ}u(z)^2\,\mu(\dint z)<\infty$ (recall that $(\Omega,\cF,\PP)$ is the underlying probability space).

\paragraph{(III) Multiple stochastic integrals.}
For every integer $q\geq 1$ and every deterministic function $f\in L_{\rm sym}^2(\mu^q)$ let us indicate by $I_q(f)$ the {\it multiple Wiener-It\^o stochastic integral} of order $q$ of $f$ with respect to the compensated Poisson measure $\eta-\mu$. For general $f\in L^2(\mu^q)$ we put $I_q(f):=I_q(\widetilde{f})$, where $\widetilde{f}(x_1,\ldots,x_q)=(q!)^{-1}\sum_{\pi}f(x_{\pi(1)},\ldots,x_{\pi(q)})$ is the canonical symmetrization of $f$ and the sum in its definition runs over all $q!$ permutations $\pi$ of $\{1,\ldots,q\}$. The multiple stochastic integrals satisfy the following properties: $$\EE[I_{q_i}(f_i)]=0\quad(i=1,2), \quad{\rm and}\quad\EE\big[I_{q_1}(f_1)I_{q_2}(f_2)\big]=q_1!\langle f_1,f_2\rangle\,{\bf 1}(q_1=q_2)$$ for any $q_1,q_2\in\{1,2,\ldots\}$, $f_1\in L_{\rm sym}^2(\mu^{q_1})$ and $f_2\in L_{\rm sym}^2(\mu^{q_2})$.

\paragraph{(IV) Chaotic representation property.}
The $q$-th Wiener chaos $W_q$ associated with the Poisson measure $\eta$ is the Hilbert space $W_q=\{I_q(f):f\in L_{\rm sym}^2(\mu^q)\}$. In addition, we put $W_0:=\RR$. It is a crucial property of $\eta$ that $L^2(P_\eta)$ can be written as a direct sum of Wiener chaoses, i.e.\ $L^2(P_\eta)=\bigoplus\limits_{q=0}^\infty W_q$. As a consequence, every $F\in L^2(P_\eta)$ admits a (unique) chaotic decomposition in the sense that
\begin{equation}\label{eq:ChaosDecomposition}
F=\EE[F]+\sum_{q=1}^\infty I_q(f_q)
\end{equation}
with suitable functions $f_q\in L_{\rm sym}^2(\mu^q)$ and where the series converges in $L^2(P_\eta)$.

\paragraph{(V) The Malliavin operators $D$, $L^{-1}$ and $\delta$.}
The domain ${\rm dom}\,D$ of the {\it derivative operator} $D$ is the set of all $F\in L^2(P_\eta)$ admitting a chaos decomposition \eqref{eq:ChaosDecomposition} such that $\sum\limits_{q=1}^\infty q\,q!\|f_q\|^2<\infty$. For such $F$ the random function $\sZ\ni z\mapsto D_zF$ is defined by 
\begin{equation}\label{e:df}
D_zF=\sum_{q=1}^\infty qI_{q-1}\big(f_q(z,\,\cdot\,)\big),
\end{equation} 
where $f_q(z,\,\cdot\,)$ is the function $f_q$ with one of its argument fixed to be $z$. Notice that $DF\in L^2(\PP,L^2(\mu))$. The derivative operator can be also characterized as an ``add-one cost operator'', as follows; see \cite{LastPenrose,NualartVives} for proofs of this fact. For $F\in L^2(P_\eta)$ and $z\in\sZ$, let $F_z(\eta)$ be the random variable $F(\eta+\d_z)$. Then, for $F\in{\rm dom}\,D$ and $\mu$-almost every $z\in\sZ$, we have the identity $D_zF=F_z-F$, a.s.-$\PP$. Throughout the text, we also implicitly use the following converse statement (the proof is an elementary consequence of the main findings of \cite{LastPenrose}, and is included for the sake of completeness).

\begin{lemma}\label{l:rip} Let $F\in L^2(P_\eta)$ be such that $\EE\int_{\sZ} (F_z - F)^2\, \mu(\dint z)<\infty$. Then, $F\in {\rm dom}\, D$.
\end{lemma}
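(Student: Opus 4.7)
The plan is to start from the chaotic decomposition $F=\EE[F]+\sum_{n\geq 1}I_n(f_n)$ in $L^2(P_\eta)$, identify the chaos expansion of the random field $z\mapsto F_z-F$, and then apply orthogonality of chaoses together with Fubini's theorem to obtain the explicit identity
$$\EE\int_{\sZ}(F_z-F)^2\,\mu(\dint z)=\sum_{n\geq 1}n\,n!\,\|f_n\|^2.$$
Since $F\in{\rm dom}\,D$ is by definition equivalent to $\sum_{n\geq 1}n\,n!\,\|f_n\|^2<\infty$, this identity immediately yields the conclusion.

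To identify the chaos expansion of $F_z-F$, the key tool I will invoke is the kernel-recovery formula of Last and Penrose: for every $G\in L^2(P_\eta)$ with chaotic decomposition $G=\EE[G]+\sum_{k\geq 1}I_k(g_k)$, one has, for $\mu^k$-almost every $(y_1,\ldots,y_k)$,
$$g_k(y_1,\ldots,y_k)=\frac{1}{k!}\,\EE\bigl[D^{(k)}_{y_1,\ldots,y_k}G\bigr],$$
where $D^{(k)}$ denotes the $k$-fold iterated add-one-cost operator. Fubini's theorem applied to the hypothesis ensures that $F_z-F\in L^2(P_\eta)$ for $\mu$-a.e.\ $z$. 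Fixing such a $z$, applying the formula to $G=F_z-F$, and using the direct combinatorial identity $D^{(k)}_{y_1,\ldots,y_k}(F_z-F)=D^{(k+1)}_{z,y_1,\ldots,y_k}F$ (which follows by splitting subsets of $\{0,1,\ldots,k\}$ according to whether they contain the index $0$ associated with $z$), I identify the $k$-th chaos kernel of $F_z-F$ as $(k+1)\,f_{k+1}(z,\,\cdot\,)$, so that
$$F_z-F=\sum_{n\geq 1}n\,I_{n-1}\bigl(f_n(z,\,\cdot\,)\bigr)\qquad\text{in }L^2(P_\eta),\ \text{for $\mu$-a.e.\ }z\in\sZ.$$

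From here, the end-game is routine: by the Wiener--It\^o isometry, $\EE[I_{n-1}(f_n(z,\cdot))^2]=(n-1)!\,\|f_n(z,\cdot)\|^2_{L^2(\mu^{n-1})}$, combined with the orthogonality of chaoses of distinct order and one last application of Fubini, yields
$$\EE\int_{\sZ}(F_z-F)^2\,\mu(\dint z)=\sum_{n\geq 1}n^2(n-1)!\int_{\sZ}\|f_n(z,\,\cdot\,)\|^2_{L^2(\mu^{n-1})}\,\mu(\dint z)=\sum_{n\geq 1}n\,n!\,\|f_n\|^2,$$
as claimed. The only delicate point is the validity of the Last--Penrose kernel-recovery formula for a generic $G\in L^2(P_\eta)$ without any a priori domain assumption on $G$; however this is precisely what their theorem provides, so this step amounts to a careful application of an already-proved result rather than a genuinely hard obstacle.
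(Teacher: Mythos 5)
Your proposal is correct and follows essentially the same route as the paper: both arguments apply the Last--Penrose kernel-recovery formula to the add-one-cost difference $F_z-F$ (for $\mu$-a.e.\ $z$ where it is square-integrable), identify its $k$-th chaos kernel as $(k+1)f_{k+1}(z,\,\cdot\,)$, and conclude via the Wiener--It\^o isometry that $\EE\int_{\sZ}(F_z-F)^2\,\mu(\dint z)=\sum_{n\geq 1}n\,n!\,\|f_n\|^2$. The only difference is that you spell out the final isometry/Fubini computation which the paper leaves implicit.
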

\begin{proof} For every $z\in \mathcal{Z}$, define the `trajectorial' difference operator $D'_zF(\eta) = F_z(\eta) - F(\eta)$. According to \cite[Theorem 1.3]{LastPenrose}, the square-integrable random variable $F$ admits a chaotic decomposition of the type (\ref{eq:ChaosDecomposition}), with
$$
f_q(z_1,\ldots,z_q) = \frac{1}{q!}\,\EE[D'_{z_1}\ldots D'_{z_q} F], \quad q=1,2,\ldots
$$
(in particular, the deterministic function on the right-hand side of the previous equation is a well-defined element of $L^2_{\rm sym}(\mu^q)$ for every $F\in L^2(P_\eta)$ and every $q\geq 1$). In view of the assumptions, there exists a measurable set $\mathcal{Z}'$ such that $\mu(\mathcal{Z}\backslash \mathcal{Z}') = 0$ and $ \EE[(D'_zF)^2]= \EE[(F_z - F)^2] <\infty$ for every $z\in \mathcal{Z}'$. It follows that the statement is proved once we show that, for every $z\in \mathcal{Z}'$, the chaotic decomposition of $D_z'F$ coincides with the right-hand side of (\ref{e:df}). Again by virtue of \cite[Theorem 1.3]{LastPenrose}, one has that the $q$th integrand in the chaotic decomposition of $D'_zF$ is given by the mapping
$$
(z_1,\ldots,z_q) \mapsto \frac{1}{q!}\,\EE[D'_{z_1}\ldots D'_{z_q} D'_zF] = (q+1) f_{q+1}(z,z_1,\ldots,z_q),
$$
which yields the desired conclusion.
\end{proof}

\noindent For any $F\in L^2(P_\eta)$ with chaotic decomposition \eqref{eq:ChaosDecomposition} satisfying $\EE[F]=0$ we put $$L^{-1}F=-\sum_{q=1}^\infty q^{-1}I_q(f_q).$$ The operator $L^{-1}$ is 
the so-called {\it pseudo-inverse of the Ornstein-Uhlenbeck generator}. Finally, we observe
that, due to the chaotic representation property of $\hat{\eta}$,
every random function $u \in L^2(\mathbb{P},L^2(\mu)) $ admits a
(unique) representation of the type
\begin{equation}\label{processes}
u_z = \sum_{q=0}^\infty I_q(f_q(z,\,\cdot\,)),\qquad z\in\sZ,
\end{equation}
where, for every $z$, the kernel $f_q(z,\,\cdot\,)$ is an element of
$L^2_{\rm sym}(\mu^q)$. The domain of the {\it divergence operator},
denoted by ${\rm dom}\, \delta$, is defined as the collections of
those $u\in L^2(\mathbb{P},L^2(\mu))$ such that the chaotic
expansion (\ref{processes}) verifies the condition
\begin{equation*}%\label{DomDelta}
\sum_{q=0}^\infty(q+1)! \|f_q\|^2<\infty.
\end{equation*}
If $u \in {\rm dom}\, \delta$, then the random variable $\delta (u)$
is defined as
\begin{equation*}%\label{Skoro}
\delta(u) = \sum_{q=0}^\infty I_{q+1}(\tilde{f}_q),
\end{equation*}
where $\tilde{f}_q$ stands for the canonical symmetrization of $f_q$
(as a function in $q+1$ variables). The following classic result, proved e.g.\ in
\cite{NualartVives}, yields a characterization of $\delta$ as
the adjoint of the derivative $D$.
\begin{lemma}[{\bf Integration by parts formula}]\label{L : SkorohodAsAdjoint} For every $G \in {\rm
dom}\,D$ and every $u \in {\rm dom}\, \delta$, one has that
\begin{equation}\label{dualDdelta}
\mathbb{E} [G \delta(u)] = \mathbb{E}[\langle DG , u   \rangle],
\end{equation}
where, more explicitly,
$$
\langle DG, u  \rangle = \int_{\mathcal Z} D_z G \times u(z)\,\mu(\dint z).
$$
\end{lemma}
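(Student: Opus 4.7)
The plan is to establish the duality $\EE[G\delta(u)] = \EE[\langle DG,u\rangle]$ by expanding both sides into their chaotic decompositions and matching them term by term via the orthogonality/isometry relations for multiple Wiener--It\^o integrals recalled in Section 3-(III).

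First, I would write the chaotic expansion of $G$ as $G = \EE[G] + \sum_{q\geq 1} I_q(g_q)$ with $g_q \in L^2_{\rm sym}(\mu^q)$, and the expansion of $u$ in the sense of (\ref{processes}), namely $u_z = \sum_{p\geq 0} I_p(f_p(z,\,\cdot\,))$ with $f_p(z,\,\cdot\,) \in L^2_{\rm sym}(\mu^p)$ for $\mu$-almost every $z$. The assumption $G\in{\rm dom}\,D$ ensures $\sum_q q\,q!\|g_q\|^2<\infty$, while $u\in{\rm dom}\,\delta$ ensures $\sum_p(p+1)!\|f_p\|^2<\infty$; together, by Cauchy--Schwarz, these make the bilinear pairing $\sum_p (p+1)!\langle g_{p+1},f_p\rangle$ absolutely convergent, which in turn will justify the interchanges of summation, expectation and $\mu$-integration performed below.

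Next, by definition $\delta(u) = \sum_{p\geq 0} I_{p+1}(\widetilde{f}_p)$. Taking $\EE[G\delta(u)]$, the constant $\EE[G]$ contributes nothing (centred integrals), and orthogonality of integrals of different orders together with the isometry $\EE[I_{q}(a)I_{q}(b)] = q!\,\langle a,b\rangle$ yields
\[
\EE[G\delta(u)] \;=\; \sum_{p\geq 0}(p+1)!\,\langle g_{p+1},\widetilde{f}_p\rangle \;=\; \sum_{p\geq 0}(p+1)!\,\langle g_{p+1},f_p\rangle,
\]
where the last equality uses that $g_{p+1}$ is already $\mu^{p+1}$-symmetric, so it is orthogonal to the anti-symmetric part of $f_p$. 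On the other hand, from (\ref{e:df}) we have $D_zG = \sum_{q\geq 1} q\,I_{q-1}(g_q(z,\,\cdot\,))$, hence formally
\[
\langle DG,u\rangle \;=\; \int_{\mathcal Z}\Bigl(\sum_{q\geq 1} q\,I_{q-1}(g_q(z,\,\cdot\,))\Bigr)\Bigl(\sum_{p\geq 0} I_p(f_p(z,\,\cdot\,))\Bigr)\mu(\dint z).
\]
Applying Fubini (justified by the domain conditions together with the orthogonality of chaoses, which kills all cross terms in expectation) and using the isometry on each fibre $z$, only the pairs $(q,p)$ with $q-1=p$ survive, giving
\[
\EE[\langle DG,u\rangle] \;=\; \sum_{q\geq 1} q\cdot(q-1)!\int_{\mathcal Z}\langle g_q(z,\,\cdot\,),f_{q-1}(z,\,\cdot\,)\rangle_{L^2(\mu^{q-1})}\,\mu(\dint z) \;=\; \sum_{q\geq 1} q!\,\langle g_q,f_{q-1}\rangle,
\]
which coincides with the expression obtained for $\EE[G\delta(u)]$ after the substitution $p=q-1$.

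The only delicate point is the rigorous justification of exchanging the infinite sums with the expectation and the $\mu$-integral when assembling $\EE[\langle DG,u\rangle]$. I would handle this by first truncating both chaotic expansions at finite order $M$, applying the exact computation above (which is a finite sum), and then letting $M\to\infty$, using the $L^2$-convergence of the truncated series of $G$ and of $\delta(u)$ (ensured by the respective domain conditions) together with the Cauchy--Schwarz bound on the tails of $\sum(p+1)!\langle g_{p+1},f_p\rangle$; no additional subtlety beyond standard orthogonality arguments is required.
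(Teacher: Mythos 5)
Your proof is correct and is precisely the classical chaos-expansion duality argument: the paper itself does not prove this lemma but cites Nualart--Vives, and the argument given there (expand $G$ and $u$ in Wiener--It\^o chaos, use the isometry and orthogonality of multiple integrals to reduce both sides to $\sum_{p\geq 0}(p+1)!\langle g_{p+1},f_p\rangle$, with the symmetrization $\widetilde{f}_p$ absorbed by the symmetry of $g_{p+1}$) is exactly the one you reproduce. Your handling of the convergence issues via Cauchy--Schwarz on the tails and truncation at finite chaos order is the standard and adequate justification, so nothing further is needed.
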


\paragraph{(VI) Contractions.}
Let $f\in L_{\rm sym}^2(\mu^q)$ for some integer $q\geq 1$ and $r\in\{0,\ldots,p\}$, $\ell\in\{1,\ldots,r\}$. The contraction kernel $f\s_r^\ell f$ on $\sZ^{2q-r-\ell}$ acts on the tensor product $f\otimes f$ first by identifying $r$ variables and then integrating out $\ell$ among them. More formally,
\begin{equation*}
\begin{split}
f\s_r^\ell f(\g_1,\ldots,\g_{r-\ell},t_1,\ldots,t_{q-r},&s_1,\ldots,s_{q-r}) = \int_{\sZ^\ell}f(z_1,\ldots,z_\ell,\g_1,\ldots,\g_{r-\ell},t_1,\ldots,t_{q-r})\\
&\times\,f(z_1,\ldots,z_\ell,\g_1,\ldots,g_{r-\ell},s_1,\ldots,s_{q-r})\,\mu^\ell\big(\dint(z_1,\ldots,z_\ell)\big).
\end{split}
\end{equation*}
In addition, we put
\begin{equation*}
f\s_r^0f(\g_1,\ldots,\g_r,t_1,\ldots,t_{q-r},s_1,\ldots,s_{q-r})=f(\g_1,\ldots,\g_r,t_1,\ldots,t_{q-r})f(\g_1,\ldots,\g_r,s_1,\ldots,s_{q-r}).
\end{equation*}
Besides the contraction $f\s_r^\ell f$, we will also deal with its canonical symmetrization $f\ts_r^\ell f$, which is defined as $$(f\ts_r^\ell f)(x_1,\ldots,x_{2q-r-\ell})={1\over (2q-r-\ell)!}\sum_\pi(f\s_r^\ell f)(x_{\pi(1)},\ldots,x_{\pi(2q-r-\ell)}),$$ where the sum runs over all $(2q-r-\ell)!$ permutations of the set $\{1,\ldots,2q-r-\ell\}$.

\paragraph{(VII) Product formula.}
Let $q_1,q_2\geq 1$ be integers, $f_1\in L_{\rm sym}^2(\mu^{q_1})$ and $f_2\in L_{\rm sym}^2(\mu^{q_2})$ be as in the previous paragraph. In terms of the contractions of $f_1$ and $f_2$ one can express the product of $I_{q_1}(f_1)$ and $I_{q_2}(f_2)$ as follows:
\begin{equation}\label{eq:ProductFormula}
I_{q_1}(f_1)I_{q_2}(f_2)=\sum_{r=0}^{\min(q_1,q_2)}r!{q_1\choose r}{q_2\choose r}\sum_{\ell=0}^r{r\choose\ell}I_{q_1+q_2-r-\ell}(f_1\ts_r^\ell f_2)\,;
\end{equation}
see \cite[Proposition 6.5.1]{PeccatiTaqquBook}. In the particular case $q_1=q_2=:q$ and $f_1=f_2$, we may define $G_0^qf:=q!\|f\|^2$ and
\begin{equation*}\label{eq:DefGpq}
G_p^qf:=\sum_{r=0}^q\sum_{\ell=0}^r{\bf 1}(2q-r-\ell=p)\,r!{q\choose r}^2{r\choose\ell}f\ts_r^\ell f
\end{equation*}
for $p\in\{1,\ldots,2q\}$, which allows us to re-write \eqref{eq:ProductFormula} in the more compact form
\begin{equation}\label{eq:ProductFormulaWithG}
I_q^2(f)=\sum_{p=0}^{2q}I_p(G_p^qf).
\end{equation}

\paragraph{(VIII) Technical assumptions.}
Whenever we deal with a multiple stochastic integral or a sequence $I_q(f_n)$ of such integrals with $f_n\in L_{\rm sym}^2(\mu_n^q)$ we will (implicitly) assume that the following technical conditions are satisfied:
\begin{itemize}
 \item[i)] for any $r\in\{1,\ldots,q\}$, the contraction $f_n\s_r^{q-r}f_n$ is an element of $L^2(\mu_n^r)$;
 \item[ii)] for any $r\in\{1,\ldots,q\}$, $\ell\in\{1,\ldots,r\}$ and $(z_1,\ldots,z_{2q-r-\ell})\in\sZ^{2q-r-\ell}$ we have that $(|f_n|\s_r^\ell|f_n|)(z_1,\ldots,z_{2q-r-\ell})$ is well defined and finite;
 \item[iii)] for any $k\in\{0,\ldots,2(q-1)\}$ and any $r$ and $\ell$ satisfying $k=2(q-1)-r-\ell$ we have that $$\int_{\sZ}\sqrt{\int_{\sZ}\big(f_n(z,\,\cdot\,)\s_r^\ell f_n(z,\,\cdot\,)\big)^2\,\dint\mu_n^{k}}\,\mu_n(\dint z)<\infty.$$
\end{itemize}

We remark that (iii) is automatically satisfied if the control measure $\mu$ is finite (which is the case in our Examples \ref{ex:lilly} and \ref{ex:lilly2}-\ref{ex:lilly4}). Intuitively, conditions (i)-(iii) ensure that every manipulation involving contraction kernels performed below is justified and is in fact valid. For the detailed role of these conditions and their implications we refer to \cite{LRP} or \cite{PSTU2010}.

\section{Proofs of the results}\label{sec:proofs}

\subsection{Proof of Theorem \ref{thm:SteinBound}}\label{sec:proofSteinBound}

Before entering the details of the proof of Theorem \ref{thm:SteinBound} we recall some facts related to Stein's method for the Gamma distribution established by Luk in \cite{Luk}; see also Pickett \cite{Pickett} for refinements in the case of an integer-valued parameter $\nu$. We start by considering the \textit{second-order} Stein equation 
\begin{equation}\label{e:luk}
h(x-\nu)-\EE[h\big(G^*(\nu)\big)]=2xg''(x)-(x-\nu)g'(x),\qquad x>0,
\end{equation}
where $G^*(\nu)=G(\nu)+\nu$, with $G(\nu)$ distributed according to $\cGam_\nu$, and $h\in \mathcal{H}^3$. It is shown in \cite[Theorem 1]{Luk} that (\ref{e:luk}) admits a solution $V_h$ such that $\|V_h^{(j)}\|_\infty\leq {2\over j}\|h^{(j)}\|_\infty$ for $j=1,2,3$. Note that the assumption $h\in \mathcal{H}^3$ automatically yields that $h$ has sub-exponential growth, so that \cite[Theorem 1]{Luk} can directly be applied.

Now, we turn to the {\it first-order} Stein operator $T$ for $\cGam_\nu$, which acts on differentiable functions $f:\RR\to\RR$. It is given by $$Tf(x)=2(x+\nu)_+f'(x)-xf(x),\qquad x\in\RR.$$ The associated first-order Stein equation is
\begin{equation*}\label{eq:SteinEquation}
h(x)-\EE[h\big(G(\nu)\big)]=Tf(x),\qquad x\in\RR, 
\end{equation*}
where $h\in \mathcal{H}^3$. For such an $h$, a solution $U_h$ of the Stein equation -- in what follows, sometimes called a {\it Stein solution} -- is provided by
\begin{equation}\label{eq:SteinSolution}
U_h(x)=\begin{cases}-{1\over x}\big(h(x)-\EE[h(G(\nu))\big)] &: \,\,x\leq -\nu\\ V_h'(x+\nu) &:\,\, x>-\nu.\end{cases}
\end{equation}
Recall that the probability density of $G(\nu)$ is given by $$g_\nu(x) = {2^{-\nu/2}\over{\Gamma(\nu/2)}}{(x+\nu)^{\nu/2 - 1}}e^{-(x+\nu)/2}\,{\bf 1}_{\{x>-\nu\}}.$$ Since for our choice of the test function $h$ the mapping $x\mapsto U_h(x) =V_h'(x+\nu)$ is bounded on $(\nu, \infty)$, we can use \cite[Lemma 4]{Stein} to deduce that, necessarily,
$$
U_h(x) =\frac{1}{2(x+\nu)g_\nu(x)} \int_{-\nu}^x\big(h(y)-\EE[h(G(\nu))]\big)\,g_\nu(y)\,\dint y,\qquad x>-\nu,
$$
yielding that $x\mapsto U_h(x)$ is continuous on $\RR$, as deduced from a simple application of de l'H\^opital's rule at $x=-\nu$. Also, $U_h$ is twice differentiable on $\RR\setminus\{\nu\}$ and satisfies the estimates $\|U_h\|_\infty\leq\max(2,2/\nu)=:c_0$, $\|U_h'\|_\infty\leq\max(1,1/\nu+1/\nu^2)=c_1$ and $\|U_h''\|_\infty\leq\max(2/3,2/(3\nu)-3/\nu^2+4/\nu^3)=c_2$ (here, $c_1$ and $c_2$ are the constants from Theorem \ref{thm:SteinBound}). We stress that, albeit $U_h(x)$ is continuous on $\RR$, such a function is in general not differentiable at $x=-\nu$ (it is however right- and left-differentiable at such a point). We remark that the quantities $2$, $1$ and $2/3$ appearing in the constants $c_0,c_1$ and $c_2$ come from smoothness estimates for (\ref{eq:SteinSolution}) on the interval $(-\nu,\infty)$, whereas the presence of the constants $2/\nu$, $1/\nu+1/\nu^2$ and $2/(3\nu)-3/\nu^2+4/\nu^3$ is explained by elementary estimates of \eqref{eq:SteinSolution} on the interval $(-\infty,-\nu]$. 

Let now $\cF^2$ be the space of continuous functions $f$ on $\RR$, which are twice differentiable on $\RR\setminus\{\nu\}$ and satisfy $$\|f\|_\infty\leq c_0,\quad\|f'\|_\infty\leq c_1\quad{\rm and}\quad\|f''\|_\infty\leq c_2.$$ In the light of the previous discussion, we conclude that
\begin{equation}\label{eq:d2SteinBoundBeginnging}
d_3(F,\cGam_\nu) \leq \sup_{f\in\cF^2}\left|\EE\big[2(F+\nu)_+f'(F)-Ff(F)\big]\right|,
\end{equation}
where $F$ and $\nu$ are as in the statement of Theorem \ref{thm:SteinBound} and where here and below $f'(-\nu)$ stands for the left-sided derivative of $f$ at $-\nu$, i.e.\ $f'(-\nu)=\lim\limits_{x\nearrow -\nu}f'(x)$. We also refer the reader to Lemma 1.3 in \cite{NourdinPeccatiWienerChaos} and the references cited therein. The estimate \eqref{eq:d2SteinBoundBeginnging} is the starting point of the proof of Theorem \ref{thm:SteinBound}.

\begin{proof}[Proof of Theorem \ref{thm:SteinBound}]
We have to show that the right-hand side of \eqref{eq:d2SteinBoundBeginnging} is bounded from above by the right-hand side of \eqref{eq:d2SteinBoundGeneral}. This is done by borrowing some ideas from \cite{SchulteKolm}. To start with, consider $f\in\cF^2$, write $F(\eta)$ instead of $F$ to emphasize the dependency of $F$ on $\eta$ and fix $z\in\sZ$. Because of the non-differentiability of the Stein-solution at $-\nu$, we will have to distinguish the three cases a) $F(\eta)\leq-\nu$, $F(\eta+\d_z)\leq -\nu$ or $F(\eta)>-\nu$, $F(\eta+\d_z)>-\nu$, b) $F(\eta)\leq-\nu<F(\eta+\d_z)$ and c) $F(\eta+\d_z)\leq-\nu<F(\eta)$. For a) we use a Taylor expansion to see that
\begin{equation*}
\begin{split}
D_zf\big(F(\eta)\big) &= f\big(F(\eta+\d_z)\big)-f\big(F(\eta)\big)\\
&=  f'\big(F(\eta)\big)\big(F(\eta+\d_z)-F(\eta)\big)+R\big(F(\eta+\d_z)-F(\eta)\big)\\
&= f'\big(F(\eta)\big)D_zF(\eta)+R_a\big(D_zF(\eta)\big),
\end{split}
\end{equation*}
where the reminder $R_a$ is such that $|R_a(x)|\leq {1\over 2}\|f''\|_\infty\, x^2={1\over 2}c_2x^2$; recall that $f$ is differentiable on $\RR\setminus\{-\nu\}$, as well as right- and left-differentiable at $x=-\nu$. For case b) we also use a Taylor expansion to see that
\begin{equation*}
\begin{split}
D_zf\big(F(\eta)\big) &= f\big(F(\eta+\d_z)\big)-f\big(F(\eta)\big)=f\big(F(\eta+\d_z)\big)-f(-\nu)+f(-\nu)-f\big(F(\eta)\big)\\
&=f'(-\nu+)\big(F(\eta+\d_z)+\nu\big)+{1\over 2}f''\big(\widetilde{F}(\eta)\big)\big(F(\eta+\d_z)+\nu\big)^2\\
&\qquad +\,f'\big(F(\eta)\big)\big(-\nu-F(\eta)\big)+{1\over 2}f''\big(\hat F(\eta)\big)\big(-\nu-F(\eta)\big)^2\\
&=f'\big(F(\eta)\big)D_zF(\eta)-f'\big(F(\eta)\big)\big(F(\eta+\d_z)+\nu\big)+f'(-\nu+)\big(F(\eta+\d_z)+\nu\big)\\
&\qquad +\,{1\over 2}f''\big(\widetilde{F}(\eta)\big)\big(F(\eta+\d_z)+\nu\big)^2+{1\over 2}f''\big(\hat F(\eta)\big)\big(F(\eta)+\nu\big)^2\\
&=:f'\big(F(\eta)\big)D_zF(\eta)+R_b\big(F(\eta),z,\nu\big)
\end{split}
\end{equation*}
with some $\widetilde{F}\in \big(-\nu,F(\eta+\d_z)\big) ,\hat F\in\big(F(\eta),-\nu\big)$ and where $f'(-\nu+)$ stands for the right-sided derivative of $f$ at $-\nu$. Similarly, in case c) we find that
\begin{equation*}
\begin{split}
D_zf\big(F(\eta)\big) &= f\big(F(\eta+\d_z)\big)-f\big(F(\eta)\big)=f\big(F(\eta+\d_z)\big)-f(-\nu)+f(-\nu)-f\big(F(\eta)\big)\\
&=f'(-\nu-)\big(F(\eta+\d_z)+\nu\big)+{1\over 2}f''\big(\widetilde{F}(\eta)\big)\big(F(\eta+\d_z)+\nu\big)^2\\
&\qquad +\,f'\big(F(\eta)\big)\big(-\nu-F(\eta)\big)+{1\over 2}f''\big(\hat F(\eta)\big)\big(-\nu-F(\eta)\big)^2\\
&=f'\big(F(\eta)\big)D_zF(\eta)-f'\big(F(\eta)\big)\big(F(\eta+\d_z)+\nu\big)+f'(-\nu-)\big(F(\eta+\d_z)+\nu\big)\\
&\qquad +\,{1\over 2}f''\big(\widetilde{F}(\eta)\big)\big(F(\eta+\d_z)+\nu\big)^2+{1\over 2}f''\big(\hat F(\eta)\big)\big(F(\eta)+\nu\big)^2\\
&=:f'\big(F(\eta)\big)D_zF(\eta)+R_c\big(F(\eta),z,\nu\big)
\end{split}
\end{equation*}
again with some $\widetilde{F}\in \big(F(\eta+\d_z),-\nu\big),\hat F\in\big(-\nu,F(\eta)\big)$ and where $f'(-\nu-)$ stands for the left-sided derivative of $f$ at $\nu$. Summarizing, we conclude that 
\begin{equation}\label{eq:TaylorExpansion}
D_zf\big(F(\eta)\big)=f'\big(F(\eta)\big)D_zF(\eta)+R\big(F(\eta),z,\nu\big)
\end{equation}
(recall that $f'(-\nu)=f'(-\nu-)$ by convention), where the global reminder term $R\big(F(\eta),z,\nu\big)$ is given by
\begin{equation*}
\begin{split}
R\big(F(\eta),z,\nu\big) =& \,R_a\big(F(\eta)\big)\big({\bf 1}_{\{F(\eta),F(\eta+\d_z)>-\nu\}}+{\bf 1}_{\{F(\eta),F(\eta+\d_z)\leq-\nu\}}\big)\\
&+\,R_b\big(F(\eta),z,\nu\big)\,{\bf 1}_{\{F(\eta)\leq-\nu<F(\eta+\d_z)\}}+\,R_c\big(F(\eta),z,\nu\big)\,{\bf 1}_{\{F(\eta+\d_z)\leq-\nu<F(\eta)\}}.
\end{split}
\end{equation*}
We have seen that $R_a$ has the property that $|R_a(x)|\leq {1\over 2}\|f''\|_\infty\, x^2={1\over 2}c_2x^2$. For $R_b$ and $R_c$ we notice that in these cases $\big|F(\eta+\d_z)+\nu\big|\leq\big|D_zF(\eta)\big|$ and $\big|F(\eta)+\nu\big|\leq\big|D_zF(\eta)\big|$, which together with the properties of $f\in\cF^2$ leads to the bound
\begin{equation*}
\begin{split}
\big|R\big(F(\eta),z,\nu\big)\big| 
&\leq {1\over 2}c_2\big|D_zF(\eta)\big|^2\big({\bf 1}_{\{F(\eta),F(\eta+\d_z)>-\nu\}}+{\bf 1}_{\{F(\eta),F(\eta+\d_z)\leq-\nu\}}\big)\\
&\qquad +\,\big(2c_1\big|D_zF(\eta)\big|+c_2\big|D_zF(\eta)\big|^2\big)\,{\bf 1}_{\{F(\eta)\leq-\nu<F(\eta+\d_z)\}}\\
&\qquad +\,\big(2c_1\big|D_zF(\eta)\big|+c_2\big|D_zF(\eta)\big|^2\big)\,{\bf 1}_{\{F(\eta +\d_z)\leq-\nu<F(\eta)\}}\\
&\leq c_2\big|D_zF(\eta)\big|^2+2c_1\big|D_zF(\eta)\big|\big({\bf 1}_{\{F(\eta)\leq-\nu<F(\eta+\d_z)\}}+{\bf 1}_{\{F(\eta+\d_z)\leq-\nu<F(\eta)\}}\big)\\
&=c_2\big|D_zF(\eta)\big|^2+2c_1\big(D_z{\bf 1}_{\{F(\eta)>-\nu\}}\big)\big(D_zF(\eta)\big).
\end{split}
\end{equation*}
Using now the integration by parts formula from Malliavin calculus, \eqref{dualDdelta} in Lemma \ref{L : SkorohodAsAdjoint}, and simplifying the resulting expression we find 
$$\EE\big[Ff(F)\big]=\EE\big[LL^{-1}Ff(F)\big]=\EE\big[-\delta(DL^{-1}F)f(F)\big]=\EE\big[\langle Df(F),-DL^{-1}F\rangle\big],$$ which in view of \eqref{eq:TaylorExpansion} leads to $$\EE\big[\langle Df(F),-DL^{-1}F\rangle\big]=\EE\big[f'(F)\langle DF,-DL^{-1}F\rangle\big]+\EE\big[\langle R(F,z,\nu),-DL^{-1}F\rangle \big].$$ Consequently, because of the above estimate on $\big|R\big(F(\eta),z,\nu\big)\big|$,
\begin{equation*}
\begin{split}
&\left|\EE\big[2(F+\nu)_+f'(F)-Ff(F)\big]\right|\\
&\leq \left|\EE\big[f'(F)(2(F+\nu)_+-\langle DF,-DL^{-1}F\rangle)\big]\right|+\left|\EE\big[\langle R(F,z,\nu),-DL^{-1}F\rangle \big]\right|\\
&\leq c_1\EE\left|2(F+\nu)_+-\langle DF,-DL^{-1}F\rangle\right|+c_2\int_{\sZ}\EE\big[|D_zF|^2|DL^{-1}F|\big]\,\mu(\dint z)\\
&\hspace{3cm} +\,2c_1\int_{\sZ}\EE\big[\big(D_z{\bf 1}_{\{F>-\nu\}}\big)\big(D_zF\big)|D_zL^{-1}F|\big]\,\mu(\dint z).
\end{split}
\end{equation*}
This shows the first inequality \eqref{eq:d2SteinBoundGeneral} in Theorem \ref{thm:SteinBound}. The second estimate \eqref{eq:d2SteinBoundWithoutPlus} follows from \eqref{eq:d2SteinBoundGeneral} and the assumption that $\EE\big[\langle DF,-DL^{-1}F\rangle|F\big]\geq 0$. This proves Theorem \ref{thm:SteinBound}.
\end{proof}

\subsection{Proof of Proposition \ref{l:newbound}}

We start by observing that the function $x\mapsto \Phi(x) := x|x| = {\rm sign}(x)\, x^2$, $x\in \RR$, is such that, for every $a,b\in \RR$, $\Phi(b) = \Phi(a) + 2|a| (b-a) +R(a,b)$, where $|R(a,b)|\leq (b-a)^2$. It follows that
\begin{equation}\label{e:gns}
(\Phi(b) - \Phi(a))^2 \leq 8a^2(b-a)^2 +2(b-a)^4.
\end{equation}
Since $\mu$ is finite, $$\EE\int_{\sZ}(D_z{\bf 1}_{\{F>-\nu\}})^2\,\mu(\dint z)\leq\EE\int_{\sZ}({\bf 1}_{\{F+D_zF>-\nu\}}-{\bf 1}_{\{F>-\nu\}})^2\,\mu(\dint z)\leq \mu(\sZ)<\infty,$$ which implies that ${\bf 1}_{\{F>-\nu\}} \in {\rm dom}\, D$; see Lemma \ref{l:rip} and compare with Remark \ref{rem:IndicatorInDomD} (i). Moreover, our assumptions imply that $DF|DF| = \Phi(DF)\in {\rm dom}\, \delta$. We can now apply the integration by parts formula (\ref{dualDdelta}), together with the relation $L^{-1}F = -q^{-1}F$, to deduce that
\begin{eqnarray*} {q}\times A_3(F) &=&  \EE\int_{\mathcal Z} (D_z{\bf 1}_{\{F>-\nu\}})\Phi(D_zF)\,\mu(\dint z)  \\
&=& \EE[{\bf 1}_{\{F>-\nu\}}\delta(\Phi(DF))]\\
& \leq & \big[\EE[\delta(\Phi(DF))^2]\big]^{1/2}.
\end{eqnarray*}
Again in view of our assumptions, the Skorohod isometry implied by \cite[Proposition 6.5.4]{privaultbook} is verified, and we deduce that
\begin{eqnarray*}
\EE[\delta(\Phi(DF))^2] &\leq & \EE\int_{\mathcal Z} \Phi(D_zF)^2\,\mu(\dint z) + \EE\int_{\mathcal Z}\int_{\mathcal Z} [D_{z_2}\Phi(D_{z_1}F)]^2\,\mu(\dint z_1)\mu(\dint z_2)\\
&=& \EE\int_{\mathcal Z} (D_zF)^4\,\mu(\dint z) + \EE\int_{\mathcal Z}\int_{\mathcal Z} [D_{z_2}\Phi(D_{z_1}F)]^2\,\mu(\dint z_1)\mu(\dint z_2).
\end{eqnarray*}
Since $D_{z_2}\Phi(D_{z_1}F) = \Phi(D_{z_1}F+ D_{z_2}D_{z_1}F) -\Phi(D_{z_1}F)$, we can now apply (\ref{e:gns}) with $a = D_{z_1}F$ and $b =  D_{z_1}F + D_{z_2}D_{z_1}F$ to infer the upper bound 
$$
[D_{z_2}\Phi(D_{z_1}F)]^2 \leq 8(D_{z_1}F)^2(D_{z_2}D_{z_1} F)^2 + 2 (D_{z_2}D_{z_1}F)^4,
$$
and the conclusion follows immediately.\hfill $\Box$

\subsection{Proof of Theorem \ref{thm:GammaContractions}}

Let $F_n=I_q(f_n)$ be as in the statement of Theorem \ref{thm:GammaContractions}. Then $\langle DF_n,-DL^{-1}F_n\rangle={1\over q}\|DI_q(f_n)\|^2$ and $\EE\big[\langle DF_n,-DL^{-1}F_n\rangle|F_n\big]\geq 0$. Thus, we need to prove that for such $F_n$ the right-hand side of \eqref{eq:d2SteinBoundWithoutPlus} converges to zero as $n\to\infty$. We do this by showing that the three terms $A_1'(F_n)$, $A_3(F_n)$ and $A_4(F_n)$ (see (\ref{e:tgv})) all converge to zero as $n\to \infty$; the computations performed below will also implicitly provide the upper bound (\ref{e:estimate}). It is important to note that our analysis of the terms $A_1'(F_n)$ and $A_4(F_n)$ does not make use of the fact that $\mu_n(\mathcal{Z})<\infty$. It is convenient to start with the reminder term $A_4(F_n)$.

\begin{lemma}\label{lem:GammaStep2}
Under the conditions of Theorem \ref{thm:GammaContractions}, it holds that $A_4(I_q(f_n)) \to 0$, as $n\to\infty$.
\end{lemma}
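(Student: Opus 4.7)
The plan is as follows. Since $F_n = I_q(f_n)$, one has $D_z F_n = q\, I_{q-1}(f_n(z,\cdot))$, so
$$A_4(F_n)^2 = q^4 \int_{\sZ} \EE\bigl[I_{q-1}(f_n(z,\cdot))^4\bigr]\,\mu_n(\dint z).$$
Fixing $z$ and writing $g := f_n(z,\cdot) \in L^2_{\rm sym}(\mu_n^{q-1})$, the product formula \eqref{eq:ProductFormulaWithG} yields $I_{q-1}(g)^2 = \sum_{p=0}^{2(q-1)} I_p(G_p^{q-1} g)$, and the isometry relations recalled in Section \ref{sec:background}-(III) give
$$\EE[I_{q-1}(g)^4] = \sum_{p=0}^{2(q-1)} p!\,\|G_p^{q-1} g\|^2.$$
Since $G_p^{q-1} g$ is a finite linear combination, with coefficients depending only on $q$, of the symmetrized contractions $g \ts_r^\ell g$ over $(r,\ell)$ satisfying $2(q-1)-r-\ell = p$, the triangle inequality together with the elementary bound $\|\widetilde{h}\| \leq \|h\|$ (valid because $\widetilde{h}$ is a convex combination of permutations of $h$) delivers
$$\EE[I_{q-1}(g)^4] \leq K_q \sum_{r=0}^{q-1}\sum_{\ell=0}^{r} \|g \s_r^\ell g\|^2.$$

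The key Fubini-type identity I would then establish is
\begin{equation}\label{e:keyplan}
\int_{\sZ} \|f_n(z,\cdot) \s_r^\ell f_n(z,\cdot)\|^2\,\mu_n(\dint z) = \|f_n \s_{r+1}^{\ell} f_n\|^2,
\end{equation}
valid for every $r \in \{0,\ldots,q-1\}$ and $\ell \in \{0,\ldots,r\}$. Both sides can be expanded as a fourfold integral against $f_n^{\otimes 4}$ and the pattern of shared variables matches perfectly: four copies of $f_n$ with a common argument of total size $r-\ell+1$ (namely $z$ together with the $r-\ell$ ``identified but not integrated'' coordinates of $g$), two pairs of $\ell$ shared coordinates arising from the contraction $\s_r^\ell$ in each factor of $\|g \s_r^\ell g\|^2$, and two pairs of $q-1-r$ shared coordinates arising from the outer squaring. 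This is precisely the structure of $\|f_n \s_{r+1}^{\ell} f_n\|^2$. Substituting \eqref{e:keyplan} into the previous bound and re-indexing with $(r',\ell') := (r+1,\ell)$ yields
$$A_4(F_n)^2 \leq K'_q \sum_{r'=1}^{q} \sum_{\ell'=0}^{r'-1} \|f_n \s_{r'}^{\ell'} f_n\|^2.$$

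To conclude, it remains to check that each contraction norm on the right-hand side vanishes in the limit under the hypotheses of Theorem \ref{thm:GammaContractions}. The strict inequality $\ell' < r'$ automatically excludes the ``critical'' pair $(q/2,q/2)$. For $\ell' \geq 1$, assumption \eqref{eq:GammaContractionsCondition} provides $\|f_n \s_{r'}^{\ell'} f_n\| \to 0$ directly. For $\ell' = 0$, I would rely on the dual identity
$$\|f_n \s_{r'}^{0} f_n\|^2 = \|f_n \s_{q}^{\,q-r'} f_n\|^2,$$
obtained by a direct expansion swapping the roles of ``identified but not integrated'' and ``free'' variables inside the squared norm; the right-hand side then vanishes asymptotically, either because $(q,0)$ is explicitly covered (case $r' = q$) or because $(q,q-r')$ with $r' \in \{1,\ldots,q-1\}$ falls into the hypothesis (the coincidence $(q,q-r') = (q/2,q/2)$ would demand $q = q/2$, which is excluded). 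The main obstacle will be establishing \eqref{e:keyplan} rigorously by combinatorial bookkeeping, and matching the resulting index set $\{(r',\ell'):1\leq r'\leq q,\,0\leq \ell'\leq r'-1\}$ with the precise list of pairs in \eqref{eq:GammaContractionsCondition}; the handling of the $\ell'=0$ case via the dual identity is the subtlest step.
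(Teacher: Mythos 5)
Your proof is correct and follows essentially the same route as the paper's: the bound you derive, $A_4(I_q(f_n))^2\leq K'_q\sum_{r'=1}^{q}\sum_{\ell'=0}^{r'-1}\|f_n\s_{r'}^{\ell'}f_n\|^2$, obtained via the product formula, the isometry, and the Fubini identity for contractions of $f_n(z,\cdot)$, is precisely the content of the formulae (4.17)--(4.18) of \cite{PSTU2010} that the paper simply cites, and your concluding observation that the index set $\ell'<r'$ excludes the middle contraction is the paper's closing remark verbatim. You are in fact more explicit than the paper on one point: the terms $\|f_n\s_{r'}^{0}f_n\|$ with $1\leq r'\leq q-1$ are not literally listed in \eqref{eq:GammaContractionsCondition}, and your reduction via the dual identity $\|f_n\s_{r'}^{0}f_n\|=\|f_n\s_{q}^{q-r'}f_n\|$ is exactly the step the paper leaves implicit here (it invokes the same identity explicitly only in the proof of the companion lemma controlling $A_3$).
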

\begin{proof}
First observe that in our case $$A_4(I_q(f_n)) =\sqrt{ \int\limits_{\sZ}\EE[|D_zI_q(f_n)|^4]\,\mu_n(\dint z)}.$$ We can now use \cite[formulae (4.17) and (4.18)]{PSTU2010} to deduce that
\begin{equation}
\begin{split}\label{e:fire}
A_4(I_q(f_n)) \leq q^2&\sum_{r=1}^q\sum_{\ell=0}^{r-1}{\bf 1}(1\leq r+\ell\leq 2q-1)\\ 
&\times\,\big((r+\ell-1)!\big)^{1/2}(q-\ell-1)!{q-1\choose q-1-\ell}^2{q-1-\ell\choose q-r}\|f_n\s_r^\ell f_n\|.
\end{split}
\end{equation}
Since this estimate does not involve the middle contraction $f_n\s_{q/2}^{q/2}f_n$, the conclusion follows immediately.
\end{proof}

Now we study the convergence of the sequence $A_1'(F_n)$.

\begin{lemma}\label{lem:GammaStep1}
Under the conditions of Theorem \ref{thm:GammaContractions} we have $ A_1'(I_q(f_n)) \to 0$, as $n\to\infty$.
\end{lemma}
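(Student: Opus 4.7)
Since $F_n=I_q(f_n)$, we have $-DL^{-1}F_n = q^{-1}DF_n$, and the random function $z\mapsto D_z F_n = q I_{q-1}(f_n(z,\cdot))$ is square-integrable. The plan is to produce a chaotic decomposition of
\[
\Phi_n := \tfrac{1}{q}\|DF_n\|^2 = q\int_{\sZ} I_{q-1}\bigl(f_n(z,\cdot)\bigr)^2\,\mu_n(\dint z),
\]
and then, using orthogonality of Wiener chaoses, express $A_1'(F_n)^2 = \EE[(\Phi_n - 2F_n - 2\nu)^2]$ as an explicit sum of squared norms that can be controlled by the contraction conditions in \eqref{eq:GammaContractionsCondition}.

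First, I would apply the product formula \eqref{eq:ProductFormula} pointwise in $z$ to $I_{q-1}(f_n(z,\cdot))^2$, multiply by $q$, and integrate in $z$ against $\mu_n$. After interchanging the summations and the integral (justified by the technical conditions in Section \ref{sec:background}-(VIII)), and using the fact that
\[
\int_{\sZ} f_n(z,\cdot)\star_r^\ell f_n(z,\cdot)\,\mu_n(\dint z)
\]
is, up to the combinatorial factor arising from symmetrization, a representative of $f_n\star_{r+1}^{\ell+1}f_n$, one obtains an expansion of the form
\[
\Phi_n = q!\,\|f_n\|^2 \;+\; \sum_{p=1}^{2q-2} I_p\bigl(K_{p,n}\bigr),
\]
where each kernel $K_{p,n}\in L^2_{\rm sym}(\mu_n^p)$ is a linear combination, with explicit combinatorial coefficients, of the symmetrized contractions $f_n\,\widetilde{\star}_{r}^{\ell}f_n$ subject to $2q-r-\ell=p$, with $r\in\{1,\ldots,q\}$ and $\ell\in\{0,\ldots,\min(r,q-1)\}$. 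This is entirely analogous to the calculation performed in \cite{PSTU2010} for the normal approximation of $I_q(f_n)$.

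Next, I would rewrite
\[
\Phi_n - 2F_n - 2\nu = \bigl(q!\,\|f_n\|^2 - 2\nu\bigr) \;+\; I_q\bigl(K_{q,n} - 2f_n\bigr) \;+\; \sum_{\substack{p=1\\ p\neq q}}^{2q-2} I_p\bigl(K_{p,n}\bigr),
\]
and use the orthogonality and the isometry property of multiple integrals with respect to $\hat\eta_n$ to get
\[
A_1'(F_n)^2 = \bigl(q!\,\|f_n\|^2 - 2\nu\bigr)^2 \;+\; q!\,\|K_{q,n} - 2f_n\|^2 \;+\; \sum_{\substack{p=1\\ p\neq q}}^{2q-2} p!\,\|K_{p,n}\|^2.
\]
The first summand vanishes in the limit by the assumption $q!\|f_n\|^2\to 2\nu$. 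For $p\neq q$, each $K_{p,n}$ is a linear combination of symmetrized contractions $f_n\,\widetilde{\star}_{r}^{\ell}f_n$ with $(r,\ell)\neq(q/2,q/2)$, so the triangle inequality together with $\|f_n\,\widetilde{\star}_{r}^{\ell}f_n\|\leq\|f_n\star_r^\ell f_n\|$ bounds $\|K_{p,n}\|$ by a finite sum of the quantities appearing in the first part of \eqref{eq:GammaContractionsCondition}, which converge to zero.

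The main obstacle is the $p=q$ term, and this is where the specific structure of $\cGam_\nu$ enters. Among the pairs $(r,\ell)$ with $2q-r-\ell=q$, i.e.\ $r+\ell = q-2$ at the level of $I_{q-1}(f_n(z,\cdot))^2$ (equivalently $r+\ell=q$ at the level of $f_n$), only the choice $r=\ell=q/2-1$ gives rise to the contraction $f_n\,\widetilde{\star}_{q/2}^{q/2}f_n$, with combinatorial coefficient
\[
q\cdot(q/2-1)!\binom{q-1}{q/2-1}^{2} = \frac{2}{c_q},
\]
a direct computation (using $\binom{q-1}{q/2-1} = \tfrac{1}{2}\binom{q}{q/2}$) showing that this coefficient multiplied by $c_q$ equals exactly $2$. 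Consequently
\[
K_{q,n} - 2f_n = \frac{2}{c_q}\bigl(f_n\,\widetilde{\star}_{q/2}^{q/2}f_n - c_q f_n\bigr) \;+\; \sum_{(r,\ell)} \alpha_{r,\ell}\, f_n\,\widetilde{\star}_{r}^{\ell}f_n,
\]
where the remaining sum runs over pairs with $r+\ell=q$ and $(r,\ell)\neq(q/2,q/2)$ and $\alpha_{r,\ell}$ are explicit constants. The second part of \eqref{eq:GammaContractionsCondition} controls the first summand, while the non-diagonal contraction hypotheses control the others, yielding $\|K_{q,n}-2f_n\|\to 0$. Combining all three estimates gives $A_1'(F_n)\to 0$ and, more quantitatively, the part of the bound \eqref{e:estimate} associated with $A_1(F_n)$.
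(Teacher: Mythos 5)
Your argument is correct, and it rests on the same two ingredients as the paper's proof -- the product formula applied to $I_{q-1}(f_n(z,\cdot))^2$ followed by stochastic Fubini, and the combinatorial identity $q\,(q/2-1)!\binom{q-1}{q/2-1}^2=2/c_q$ -- but you organize the computation differently, and more efficiently. The paper expands $\EE\big[(\|DF_n\|^2-2qF_n-2q\nu)^2\big]$ into five separate moments and must then compute the cross term $\EE[I_q(f_n)\|DI_q(f_n)\|^2]$ by a detour through integration by parts and the third moment $\EE[I_q^3(f_n)]$, before reassembling everything into an expression that vanishes in the limit. You instead write the full chaotic decomposition of $\langle DF_n,-DL^{-1}F_n\rangle-2F_n-2\nu$ and read off $A_1'(F_n)^2$ exactly, by isometry, as a sum of squared kernel norms; the cross term is then handled automatically because $-2f_n$ is absorbed into the $q$-th chaos kernel $K_{q,n}$ before taking norms. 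This makes the role of each hypothesis transparent: the non-middle contractions kill every chaos of order $p\neq q$ and the off-diagonal part of $K_{q,n}$, while the condition $\|f_n\,\widetilde\star_{q/2}^{q/2}f_n-c_qf_n\|\to0$ is \emph{exactly} the statement that the $q$-th chaos component of $\langle DF_n,-DL^{-1}F_n\rangle$ approaches $2F_n$. Your version also yields the quantitative bound on $A_1$ in \eqref{e:estimate} with no extra work.

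One imprecision to fix: you describe $K_{p,n}$ as a combination of contractions $f_n\,\widetilde\star_r^\ell f_n$ with $\ell\in\{0,\ldots,\min(r,q-1)\}$. If contractions with $\ell=0$ and $r<q$ actually appeared, your argument would break, since $\|f_n\star_r^0f_n\|$ for $1\le r\le q-1$ is \emph{not} among the quantities controlled by \eqref{eq:GammaContractionsCondition}. Fortunately they do not appear: integrating $f_n(z,\cdot)\star_r^\ell f_n(z,\cdot)$ over $z$ raises both indices by one, so every contraction occurring in $K_{p,n}$ has $\ell\geq1$ (and the extremal case $r=q$, $\ell=0$ never arises here). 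State the index set as $r\in\{1,\ldots,q\}$, $\ell\in\{1,\ldots,\min(r,q-1)\}$ and the argument closes cleanly.
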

\begin{proof}
One must prove that $\EE\big[\|DI_q(f_n)\|^2-2qI_q(f_n)-2q\nu\big]^2\to 0$. Expanding the square and using the fact that $\EE[I_q(f_n)]=0$ we have to show that
\begin{equation}\label{eq:ProofLemma2Condition}
\EE[\|DI_q(f_n)\|^4]-4q\EE[I_q(f_n)\|DI_q(f_n)\|^2]+4q^2\EE[I_q^2(f_n)]-4q\nu\EE[\|DI_q(f_n)\|^2]+4q^2\nu^2\to 0
\end{equation}
as $n\to\infty$. Firstly, $\EE[I_q^2(f_n)]=q!\|f_n\|^2\rightarrow 2\nu$. The definition of $DI_q(f_n)$ and formula \eqref{eq:ProductFormulaWithG} imply that
\begin{equation}\label{eq:D2Proof}
\|DI_q(f_n)\|^2=q\,q!\|f_n\|^2+q^2\sum_{p=1}^{2(q-1)}\int_{\sZ}I_p\big(G_p^{q-1}f_n(z,\,\cdot\,)\big)\,\mu_n(\dint z)
\end{equation}
so that $\EE[\|DI_q(f_n)\|^2]=q\,q!\|f_n\|^2$, which asymptotically behaves like $2q\nu$. Using integration by parts \eqref{dualDdelta} together with the relation $DF^2 = 2FDF +(DF)^2$ applied to $F = I_q(f)$, we infer that 
$$
\EE[I_q(f_n) \|DI_q(f_n)\|^2] = \frac{q}{2} \EE[I_q^3(f_n)] - \frac12\,\EE\int_\sZ D_zI_q^3(f_n)\,\mu_n(\dint z).
$$ 
Now, in view of the estimate \eqref{e:fire}, the second summand on the right-hand side of the previous equation converges to zero as $n\to \infty$, and consequently $\EE\big[I_q(f_n)\|DI_q(f_n)\|^2\big]$ behaves asymptotically as ${q\over 2}\EE[I_q^3(f_n)]$. Using \eqref{eq:ProductFormula} and the orthogonality of chaoses we obtain
\begin{equation*}\label{eq:3rdMoment}
\begin{split}
\EE[I_q^3(f_n)] &= \sum_{p=0}^qp!{q\choose p}^2\sum_{\ell=0}^p{p\choose\ell}\,\EE\big[I_{2q-p-\ell}(f_n)I_q(f_n)\big]\\ 
&= \sum_{p=q/2}^qp!{q\choose p}^2{p\choose q-p}q!\langle f_n\ts_p^{q-p}f_n,f_n\rangle,
\end{split}
\end{equation*}
so that $\EE\big[I_q(f_n)\|DI_q(f_n)\|^2\big]$ has the same limit as $${q\over 2}\sum_{p=q/2}^q p!{q\choose p}^2{p\choose q-p}q!\langle f_n\ts_p^{q-p}f_n,f_n\rangle.$$
Moreover, one can show that

\begin{equation}\label{eq:D4Proof}
\EE[\|DI_q(f_n)\|^4]=q^2(q!\|f_n\|^2)^2+q^4\sum_{p=1}^{2(q-1)}p!\|{\hat{ G}_p^qf_n}\|^2,
\end{equation}
where $\hat G_p^qf_n$ with $p\in\{1,\ldots,2(q-1)\}$ is defined by $$\hat G_p^qf_n=\sum_{t=1}^q\sum_{s=1}^{\min(t,q-1)}{\bf 1}(2q-t-s=p)\,(t-1)!{q-1\choose t-1}^2{t-1\choose s-1}f_n\ts_t^sf_n.$$ Indeed, use \eqref{eq:D2Proof}, the orthogonality of the random variables $$\int_{\sZ}I_{p_1}\big(G_{p_1}^{q-1}f_n(z,\,\cdot\,)\big)\,\mu_n(\dint z)\quad{\rm and}\quad\int_{\sZ}I_{p_2}\big(G_{p_2}^{q-1}f_n(z,\,\cdot\,)\big)\,\mu_n(\dint z)$$ for $1\leq p_1\neq p_2\leq 2(q-1)$ as well as the stochastic Fubini theorem \cite[Theorem 5.13.1]{PeccatiTaqquBook} (which is valid thanks to our technical assumptions made in Section \ref{sec:background}) to conclude that the identity \eqref{eq:D4Proof} is verified; see also the proof of Theorem 4.2 in \cite{PSTU2010}. We now exploit the assumption that $\|f_n\s_r^\ell f_n\|\to 0$ with $r$ and $\ell$ as in the statement of Theorem \ref{thm:GammaContractions}. It implies that
\begin{equation}\label{eq:OtherTermsGoToZero}
\langle f_n\ts_p^{q-p}f_n,f_n\rangle\to 0\quad{\rm and}\quad\langle f_n\ts_t^sf_n,f_n\ts_{t'}^{s'}f_n\rangle\to 0
\end{equation}
as $n\to\infty$ for all $p\in\{q/2+1,\ldots,q\}$ and $t,t'\in\{1,\ldots,q\}$, $s\in\{1,\ldots,\min(t,q-1)\}$, $s'\in\{1,\ldots,\min(t',q-1)\}$ and $t,s,t',s'$ not equal to $q/2$ at the same time. Indeed, $$|\langle f_n\ts_p^{q-p}f_n,f_n\rangle|\leq\|f_n\ts_p^{q-p}f_n\|\;\|f_n\|\leq\|f_n\s_p^{q-p}f_n\|\;\|f_n\|\to 0$$ for $p\in\{q/2+1,\ldots,q\}$ and similarly $$|\langle f_n\ts_t^sf_n,f_n\ts_{t'}^{s'}f_n\rangle|\leq\|f_n\ts_t^sf_n\|\;\|f_n\ts_{t'}^{s'}f_n\|\leq\|f_n\s_t^sf_n\|\;\|f_n\s_{t'}^{s'}f_n\|\to 0,$$ where $t,s,t',s'$ are as above.
Plugging the expressions for $\EE[\|DI_q(f_n)\|]$, $\EE\big[I_q(f_n)\|DI_q(f_n)\|^2\big]$ and $\EE[\|DI_q(f_n)\|^2]$ into \eqref{eq:ProofLemma2Condition} and using the first statement in \eqref{eq:OtherTermsGoToZero} we see immediately that \eqref{eq:ProofLemma2Condition} has the same limit as
\begin{equation}\label{eq:ProofLemma2ConditionII}
 8q^2\nu-2q^2\left({q\over 2}\right)!{q\choose q/2}^2q!\langle f_n\ts_{q/2}^{q/2}f_n,f_n\rangle+q^4\sum_{p=1}^{2(q-1)}p!\|\hat G_p^qf_n\|^2.
\end{equation}
We notice now that the middle contraction in the sum in \eqref{eq:ProofLemma2ConditionII} can only appear in the term $p=q$. Using the definition of $\hat G_q^qf_n$ and the second statement in \eqref{eq:OtherTermsGoToZero} we see that $q^4\,q!\|\hat G_q^qf_n\|^2$ behaves asymptotically like $$q^4\,q!\left(\left({q\over 2}-1\right)!\right)^2{q-1\choose q/2-1}^4\|f_n\ts_{q/2}^{q/2}f_n\|^2.$$ Consequently, \eqref{eq:ProofLemma2ConditionII} has the same limit as
\begin{equation*}
\begin{split}
& \big(8q^2\nu-4q^2\,q!\|f_n\|^2\big)+4q^2\,q!\|f_n\|^2-2q^2\left({q\over 2}\right)!{q\choose q/2}^2q!\langle f_n\ts_{q/2}^{q/2}f_n,f_n\rangle\\
&\hspace{4cm}+q^4\,q!\left(\left({q\over 2}-1\right)!\right)^2{q-1\choose q/2-1}^4\|f_n\ts_{q/2}^{q/2}f_n\|^2\to 0,
\end{split}
\end{equation*}
as $n\to\infty$, where we have used the fact that $\|f_n\ts_{q/2}^{q/2}f_n\|^2\to {2\over q!}c^2_q \nu$, and $\langle f_n\ts_{q/2}^{q/2}f_n,f_n\rangle\to {2\over q!}c_q\nu$. This proves the claim.
\end{proof}

We eventually deal with the convergence of the sequence
$$
A_3(I_q(f_n)) =\frac{1}{q} \int_{\sZ}\EE\big[(D_z{\bf 1}_{\{I_q(f_n)>-\nu\}})D_zI_q(f_n)|D_zI_q(f_n)|\big]\,\mu_n(\dint z),\qquad n\geq 1.
$$
\begin{lemma}
Under the conditions of Theorem \ref{thm:GammaContractions} we have that $ A_3(I_q(f_n)) \to 0$, as $n\to\infty$.
\end{lemma}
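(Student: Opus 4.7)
The plan is to invoke Proposition \ref{l:newbound} with $F = F_n := I_q(f_n)$, thereby reducing the task to showing that the three terms on the right-hand side of \eqref{e:art} vanish in the limit. The hypotheses of Proposition \ref{l:newbound} are verified using the explicit identities $D_{z_1}F_n = qI_{q-1}(f_n(z_1,\cdot))$ and $D_{z_2}D_{z_1}F_n = q(q-1)I_{q-2}(f_n(z_1,z_2,\cdot))$, the finiteness of $\mu_n$, the technical assumptions of Section \ref{sec:background}-(VIII), the pointwise inequality $(\Phi(b)-\Phi(a))^2 \leq 8a^2(b-a)^2 + 2(b-a)^4$ already used in the proof of Proposition \ref{l:newbound}, and the fact that $A_4(F_n)^2 < \infty$ which follows from \eqref{e:fire}.

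The first term $\sqrt{\EE\int(D_zF_n)^4\,\mu_n(\dint z)}$ equals $A_4(F_n)$ and tends to $0$ by Lemma \ref{lem:GammaStep2}. For the third term $\sqrt{\EE\int\int(D_{z_2}D_{z_1}F_n)^4\,\mu_n^2(\dint(z_1,z_2))}$, one applies the product formula \eqref{eq:ProductFormulaWithG} to $I_{q-2}(f_n(z_1,z_2,\cdot))^2$, takes the fourth moment by orthogonality of chaoses, and obtains a finite linear combination of squared norms of symmetrized contractions $\widetilde{f_n(z_1,z_2,\cdot)\ts_r^\ell f_n(z_1,z_2,\cdot)}$ with $0\leq \ell\leq r\leq q-2$. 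A diagrammatic computation, matching the six pair-shared-position counts in the underlying Feynman graphs, then identifies the integral of each such quantity against $\mu_n^2(\dint(z_1,z_2))$ with $\|f_n \s_{r+2}^{\ell} f_n\|^2$. For $\ell \geq 1$ the pair $(r+2,\ell)$ is directly covered by \eqref{eq:GammaContractionsCondition}, while for $\ell = 0$ one invokes the duality $\|f_n \s_{r'}^{0} f_n\| = \|f_n \s_{q}^{q-r'} f_n\|$ (which is a consequence of the same diagrammatic symmetry) to reduce to a pair that is likewise covered. The forbidden middle pair $(q/2, q/2)$ never arises in this setting, since the constraint $\ell \leq r$ would force $q/2 \leq q/2 - 2$, which is impossible.

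The second term $\sqrt{\EE\int\int(D_{z_2}D_{z_1}F_n)^2(D_{z_1}F_n)^2\,\mu_n^2(\dint(z_1,z_2))}$ is handled by the same philosophy. The product $(D_{z_2}D_{z_1}F_n)(D_{z_1}F_n) = q^2(q-1)\,I_{q-2}(f_n(z_1,z_2,\cdot))\,I_{q-1}(f_n(z_1,\cdot))$ is expanded via \eqref{eq:ProductFormula}, then squared, and its expectation is taken using orthogonality of chaoses, producing a finite sum of inner products of symmetrized mixed contractions subject to a matching-order constraint. An analogous diagrammatic identification, combined with Cauchy-Schwarz on each inner product, bounds every term (after integration over $(z_1,z_2)$) by a constant multiple of some $\|f_n \s_r^{\ell} f_n\|^2$ that is covered by \eqref{eq:GammaContractionsCondition}, again using the duality for any $\ell = 0$ contributions. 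The main obstacle is the combinatorial bookkeeping for this second term: one must carefully check that the non-vanishing middle contraction $\|f_n \s_{q/2}^{q/2} f_n\|$ does not appear as a leading term, and that the regime $\mu_n(\sZ) \to \infty$ does not introduce diverging factors into any of the diagrammatic identifications.
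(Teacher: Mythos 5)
Your overall strategy is the paper's: invoke Proposition \ref{l:newbound} and show that the three terms on the right-hand side of (\ref{e:art}) vanish. Your treatment of the first term (it equals $A_4(F_n)$ and vanishes by Lemma \ref{lem:GammaStep2}) and of the fourth-power term is correct and essentially identical to the paper's; the only cosmetic difference is that you identify the relevant quantities as $\|f_n\star_{r+2}^{\ell}f_n\|$ while the paper writes $\|f_n\star_{q-\ell}^{q-2-r}f_n\|$, and these agree by the duality $\|f_n\star_a^b f_n\|=\|f_n\star_{q-b}^{q-a}f_n\|$. Your observation that $\ell\leq r$ rules out the middle pair $(q/2,q/2)$ for that term is exactly the point the paper makes.

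The gap is in the mixed term. You propose to expand $I_{q-2}(f_n(z_1,z_2,\cdot))\,I_{q-1}(f_n(z_1,\cdot))$ by the product formula, square, take expectations, and control each resulting inner product by Cauchy--Schwarz --- and then you write that one ``must carefully check that the non-vanishing middle contraction $\|f_n\star_{q/2}^{q/2}f_n\|$ does not appear as a leading term.'' That check is the entire content of the step, and you do not perform it: as written, the argument asserts that every term is covered by (\ref{eq:GammaContractionsCondition}) and then concedes that this is precisely what remains to be verified. The concern is not idle, since a blind Cauchy--Schwarz on an inner product of mixed contractions can in principle produce a factor $\|f_n\star_{q/2}^{q/2}f_n\|$, which does \emph{not} tend to zero under the hypotheses. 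The paper sidesteps the whole issue with a different split: by Fubini and Cauchy--Schwarz,
\[
\sqrt{\EE\int_{\mathcal Z}\int_{\mathcal Z} (D_{z_2}D_{z_1}F_n)^2(D_{z_1}F_n)^2\,\mu_n(\dint z_1)\,\mu_n(\dint z_2)}\;\leq\; A_4(F_n)^{1/2}\times C_n^{1/4},
\]
where $C_n:=\EE\int_{\mathcal Z}\bigl(\int_{\mathcal Z}(D_{z_2}D_{z_1}F_n)^2\,\mu_n(\dint z_2)\bigr)^2\,\mu_n(\dint z_1)$ involves only the iterated derivative and is therefore a linear combination of quantities $\|f_n\star_a^b f_n\|^2$ with $a\in\{2,\ldots,q\}$ and $b\in\{0,\ldots,a-2\}$; the constraint $b\leq a-2$ excludes the middle contraction automatically, and boundedness of $A_4(F_n)$ does the rest. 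You should either adopt this splitting or actually carry out the combinatorial bookkeeping you defer. Your closing worry about $\mu_n(\mathcal Z)\to\infty$ is unfounded: the diagrammatic identities are exact and carry no factor of the total mass; finiteness of $\mu_n$ enters only through the applicability of Proposition \ref{l:newbound} itself.
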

\begin{proof} In view of the assumptions, we can directly apply Proposition \ref{l:newbound}. 
It follows that our claim is proved once we show that the three terms on the right-hand side of (\ref{e:art}) (with $F=F_n = I_q(f_n)$ and $\mu = \mu_n$) converge to zero as $n\to\infty$. Since the first term equals $A_4(F_n)$, by virtue of the previous Lemma \ref{lem:GammaStep2}, we only have to prove the convergence of the remaining two summands. Our starting point is the following representation of the quantity $(D_{z_2}D_{z_1} F_n)^2 = q^2(q-1)^2 I_{q-2}^2(f_n(z_1,z_2,\,\cdot\,))$, which is obtained by means of the product formula (\ref{eq:ProductFormula}). Indeed,
\begin{equation*}
\begin{split}
(D_{z_2}D_{z_1} F_n)^2 &= q^2(q-1)^2\sum_{r=0}^{q-2}\sum_{\ell = 0}^{r} r!\binom{q-2}{r}^2\binom{r}{\ell} I_{2(q-2)-r-\ell} (f_n({z_1},{z_2}, \,\cdot\,)\star_r^{\ell}f_n({z_1},{z_2}, \,\cdot\,))\\
&=q^2(q-1)^2 I_{q-2}^2(f_n({z_1},{z_2},\,\cdot\,)).
\end{split}
\end{equation*}
Combining this representation with an iterated application of the triangle inequality, as well as of the isometric properties of multiple integrals, one deduces that the quantity $$\sqrt{\EE\int_{\mathcal Z}\int_{\mathcal Z} (D_{z_2}D_{z_1}F_n)^4\,\mu_n(\dint z_1)\mu_n(\dint z_2)}$$ is bounded by a linear combination (with coefficients not depending on $n$) of quantities of the type
$$
\sqrt{\int_{\mathcal Z}\int_{\mathcal Z} \| f_n(z_1,z_2, \cdot) \star_r^\ell f_n(z_1,z_2,\cdot)\|^2  \, \mu_n(\dint z_1)\mu_n(\dint z_2)} = \| f_n \star_{q-\ell}^{q-2-r} f_n\|\to 0,
$$
where the equality follows from a standard application of Fubini's theorem, and the convergence to zero is a consequence of the fact that $a:=q-\ell\in \{2,\ldots,q\}$ and $b :=q-2-r \in \{0,\ldots,a-2\}$, as well as of the elementary identity $\|f_n \star_a^0 f_n\| = \|f_n\star_q^{q-a} f_n\|$ ($2\leq a\leq q$). To deal with the remaining middle term, we use Fubini's theorem and the Cauchy-Schwarz inequality to deduce the estimate
$$
\sqrt{\EE\int_{\mathcal Z}\int_{\mathcal Z} (D_{z_2}D_{z_1}F_n)^2(D_{z_1}F_n)^2\,\mu_n(\dint z_1)\mu_n(\dint z_2)}\leq A_4(F_n)^{1/2} \times C^{1/4}_n,
$$
with $$C_n := \EE\int_{\mathcal Z}\left( \int_{\mathcal Z} (D_{z_2}D_{z_1}F_n)^2 \,\mu_n(\dint z_2)\right)^2 \mu_n(\dint z_1).$$
Using again the explicit representation of $(D_{z_2}D_{z_1}F_n)^2$ and applying several times Fubini's theorem, one sees that $C_n$ is indeed equal to a linear combination (with coefficients not depending on $n$) of objects of the type
$$
\|f_n\star_a^b f_n\|^2,\quad{\rm with}\quad a=2,\ldots,q\quad{\rm and}\quad b= 0,\ldots, a-2.
$$
The conclusion follows immediately since our estimates do not involve the middle contraction.
\end{proof}

\subsection{Proof of Proposition \ref{p:wi2}}

The product formula \eqref{eq:ProductFormula} shows that
\begin{eqnarray}\label{e:doubleprod}
I_2^2(f_n) &=& I_4({f_n\ts_0^0 f_n}) +4I_3({f_n\ts_1^0 f_n}) + I_2(4f_n\s_1^1 f_n +2 f^2_n) +4I_1(f_n\s_2^1f_n) +2\|f_n\|^2.
\end{eqnarray}
Using the relation
\begin{equation}\label{e:us}
4!\|{f_n\ts_0^0 f_n}\|^2 = 2(2\|f_n\|^2)^2 + 16\|f_n\s^1_1 f_n\|^2
\end{equation}
(see e.g.\ \cite[formula (5.2.12)]{NPBook}), exploiting the orthogonality of multiple integrals of distinct orders and using the fact that $\|f_n^2\|\to 0$ by assumption, we infer that $ \EE[I_2^4(f_n)] - 12\EE[I_2^3(f_n)] $ has the same limit as
\begin{eqnarray*}
&& 16\times 3!\|{f_n \ts_1^0 f_n}\|^2 +16\|f_n\s_2^1f_n\|^2 +48\| f_n\s_1^1 f_n\|^2  -96\langle f_n\s_1^1 f_n, f_n\rangle+   3(2\|f_n\|^2)^2 \\
&& =16\times 3!\|{f_n \ts_1^0 f_n}\|^2 +16\|f_n\star_2^1f_n\|^2+ 48\|f_n\s_1^1 f_n -f_n\|^2 -48\|f_n\|^2+3(2\|f_n\|^2)^2.
\end{eqnarray*}
The conclusion follows by observing that $\|f_n\|^2 \to \nu$ by assumption, and then by applying Theorem \ref{thm:GammaContractions}.\hfill $\Box$

\subsection{Proof of Theorem \ref{t:gdj}}

\noindent{\it Proof of  Part A}. According to Lemma \ref{prop:L1L2}, since each $\widetilde{F}'_n$ is completely degenerate, one has that $\widetilde{F}'_n = I_2(f_n)$, where $f_n = h_n/\sigma(n)$, and the double integral is performed with respect to the compensated Poisson measure $\hat{\eta}_n = \eta_n - \mu_n$. It follows that the estimate (\ref{e:e1}) is a direct consequence of \cite[Theorem 4.2]{PSTU2010}. Using formulae (\ref{e:doubleprod}) and (\ref{e:us}), we deduce that 
\begin{eqnarray*}
 \EE[I_2^4(f_n)] &=& 16\times 3!\|{f_n \ts_1^0 f_n}\|^2 +16\|f_n\star_2^1f_n\|^2 +16\| f_n\s_1^1 f_n\|^2 \\ &&\qquad +\,2\| 4f_n\s_1^1 f_n + 2f_n^2\|^2+   3(2\|f_n\|^2)^2,
\end{eqnarray*}
where the norms and contractions are of course taken with respect to the measure $\mu_n$. Since $3(2\|f_n\|^2)^2$ converges to $3$ by assumption, we deduce that, if (\ref{e:c1}) is verified, then the right-hand side of (\ref{e:e1}) converges to zero, and therefore $\widetilde{F}'_n$ converges in distribution to $N$. To conclude, observe that the estimates contained in \cite[pp.\ 744-745]{DyMa} yield that $\EE[(\widetilde{F}'_n - \widetilde{F}_n )^2] = O(n^{-1/2})$ as $n\to\infty$, so that the estimate (\ref{e:e2}) follows from the elementary inequality
$$
d_W( \widetilde{F}_n, N)\leq  d_W( \widetilde{F}'_n, N) + [\EE(\widetilde{F}'_n - \widetilde{F}_n )^2]^{1/2}.
$$

\bigskip

\noindent{\it Proof of Part B}. Again in view of Lemma \ref{prop:L1L2} and of the complete degeneracy of each $F'_n$, we deduce that $F'_n = I_2(h_n)$, where the double integral is again with respect to the compensated Poisson measure corresponding to $\eta_n$. The estimate (\ref{e:e3}) is therefore a consequence of Theorem \ref{thm:GammaContractions}, and the fact that the distribution of $\widetilde{F}'_n$ converges to $\cGam_\nu$ is a direct consequence of Proposition \ref{p:wi2} in the case $h_n = f_n$. The conclusion follows once again from the fact that $\EE[({F}'_n - {F}_n )^2] = O(n^{-1/2})$ as $n\to\infty$, in such a way that (\ref{e:e4}) follows from the triangle inequality
$$
d_3( \widetilde{F}_n, \cGam_\nu)\leq  d_3(\widetilde{F}'_n, \cGam_\nu) + [\EE(\widetilde{F}'_n -\widetilde {F}_n )^2]^{1/2}.
$$
This completes the proof.\hfill $\Box$

\subsection{Proof of Theorem \ref{t:gammaU}}

According to \cite[Theorem 7.3]{LRP2}, one has that 
$$\widetilde{F}'_n = I_2(h_n) + R_n,$$
where $h_n = h_2/n = n^{-1}\sum_{i=1}^\nu e_i\otimes e_i$, the double integral is realized with respect to the compensated Poisson measure $\hat{\eta}_n = \eta_n - n\mu$, and $R_n$  is a residual sequence of random variables such that
$$
\EE[R^2_n] = O(1/n),\qquad{\rm as}\qquad n\to\infty.
$$
It is immediate to verify that: (a) $\big(\int_{\sZ}h_n^4\,\dint\mu_n^2\big)^{1/4} = O(1/\sqrt{n})$ as $n\to\infty$, (b) $h_n \s_1^1 h_n = h_n$ (where the contraction is realized with respect to $\mu_n$), (c) $\|h_n \s_2^1 h_n\| = O(n^{-1/2})$ as $n\to\infty$ (since $h_n \s_2^1 h_n (x) = n^{-1}\sum_{i=1}^\nu e_i(x)^2$). The estimates are therefore a consequence of Theorem \ref{t:gdj}-(B), as well as of the estimates $\EE[({F}'_n - {F}_n )^2] = O(n^{-1/2})$ as $n\to\infty$ and 
$$
d_3( \widetilde{F}_n, \cGam_\nu)\leq  d_3(\widetilde{F}'_n, \cGam_\nu) + [\EE(\widetilde{F}'_n -\widetilde {F}_n )^2]^{1/2}.
$$
This completes the proof.\hfill $\Box$

\subsection{Proof of Theorem \ref{thm:MultivariateGamma}}

We start with some general preliminaries which will be specialized below.
Let $F_n^{(1)},\ldots,F_n^{(d)}$ be centered square-integrable functionals of the
Poisson measure $\eta$ in the domain of the derivative operator $D$. For $i\in\{1,\ldots,d\}$ let us define
\begin{equation}\label{eq:DefAlphaI}
\begin{split}
\alpha_n^{(i)} :=\EE|2(F_n^{(i)}+\nu_i)_+&-\langle DF_n^{(i)},-DL^{-1}F_n^{(i)}\rangle|+\EE\int_{\sZ}|D_zF_n^{(i)}|^2|D_zL^{-1}F_n^{(i)}|\,\mu_n(\dint z)\\
 &+\,\EE\int_{\sZ}(D_z{\bf 1}_{\{F_n^{(i)}>-\nu_i\}})(D_zF_n^{(i)})|D_zL^{-1}F_n^{(i)}|\,\mu_n(\dint z),
\end{split}
\end{equation}
and for $i\neq j\in\{1,\ldots,d\}$ put
\begin{eqnarray}\label{eq:DefBetaIJ}
\beta_n^{(i,j)}&:=&\EE| \langle  DF_n^{(i)} , DL^{-1}F_n^{(j)}\rangle |,\qquad\gamma_n^{(i,j)}:=\EE\int_{\sZ}|D_zF_n^{(i)}|^2|D_zL^{-1}F_n^{(j)}|\,\mu_n(\dint z).
\end{eqnarray}
We estimate the distance between (the law of) ${\bf
F}_n:=\big(F_n^{(1)},\ldots,F_n^{(d)}\big)$ and (that of) ${\bf
\Gamma}:=\big(G_1,\ldots,G_d\big)$ by $d({\bf F}_n,{\bf
\Gamma})=\sup|{\Bbb E}\phi({\bf F}_n)-{\Bbb E}\phi({\bf \Gamma})|$, where
the supremum runs over all functions $\phi:{\Bbb R}^d\to{\Bbb R}$ whose
partial derivatives up to order $3$ are bounded, continuous and satisfy $\|\,\cdot\,\|_\infty\leq 1$. We notice
that if $d({\bf F}_n,{\bf \Gamma})\to 0$ then ${\bf
F}_n\stackrel{d}{\to}{\bf \Gamma}$ as $n\to\infty$.

\begin{lemma}\label{lem:MultivariateHelp}
There exist constants $K_1$ and $K_2$ such that
\begin{equation*}
d({\bf F}_n,{\bf \Gamma})\leq
K_1\sum_{i=1}^d\alpha_n^{(i)}+K_2\sum_{\stackrel{i,j=1}{i\neq
j}}^d\big(\beta_n^{(i,j)}+\gamma_n^{(i,j)} \big).
\end{equation*}
\end{lemma}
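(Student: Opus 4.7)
The plan is to combine a multivariate interpolation in the spirit of \cite{BourPec} with the integration-by-parts / Taylor expansion scheme already deployed in the proof of Theorem~\ref{thm:SteinBound}, the interpolating object being the tensor product of the Laguerre-type semigroups naturally associated with each marginal $\cGam_{\nu_i}$. Specifically, for each $i$ I would introduce the Markov semigroup $Q^{(i)}_t$ on $\RR$ with generator
\[
\mathcal{L}_i\psi(x):=2(x+\nu_i)_+\,\psi''(x)-x\,\psi'(x),
\]
(whose unique invariant measure is $\cGam_{\nu_i}$) and form the product semigroup $T_t:=Q^{(1)}_t\otimes\cdots\otimes Q^{(d)}_t$ on $\RR^d$, whose invariant measure is $\bigotimes_{i=1}^d\cGam_{\nu_i}$. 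For $\phi$ as in the definition of $d({\bf F}_n,{\bf \Gamma})$, the fundamental theorem of calculus then yields the interpolation identity
\[
\EE[\phi({\bf F}_n)]-\EE[\phi({\bf \Gamma})]=-\int_0^{\infty}\sum_{i=1}^d\EE\big[\mathcal{L}_i(T_t\phi)({\bf F}_n)\big]\,\dint t,
\]
reducing the problem to estimating each $\EE[\mathcal{L}_i\psi({\bf F}_n)]$ with $\psi:=T_t\phi$, subject to a $t$-integrable majorant.

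Fixing $t$ and $i$, and proceeding exactly as in the proof of Theorem~\ref{thm:SteinBound}, I would apply the integration-by-parts formula \eqref{dualDdelta} to the linear piece of $\mathcal{L}_i\psi({\bf F}_n)$:
\[
\EE\big[F_n^{(i)}\partial_i\psi({\bf F}_n)\big]=\EE\big[\big\langle D\big(\partial_i\psi({\bf F}_n)\big),\,-DL^{-1}F_n^{(i)}\big\rangle\big].
\]
Next, I would Taylor-expand $D_z\partial_i\psi({\bf F}_n)=\partial_i\psi\big(F_n^{(1)}+D_zF_n^{(1)},\ldots,F_n^{(d)}+D_zF_n^{(d)}\big)-\partial_i\psi({\bf F}_n)$, following the three-case analysis (no boundary crossing, upward crossing of $\{x_i=-\nu_i\}$, downward crossing) used in Section~\ref{sec:proofSteinBound} to handle the non-smoothness of $\psi$ at $x_i=-\nu_i$. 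This produces a first-order expansion
\[
D_z\partial_i\psi({\bf F}_n)=\sum_{j=1}^d\partial_{ij}^2\psi({\bf F}_n)\,D_zF_n^{(j)}+\rho_z^{(i)},
\]
with a remainder $\rho_z^{(i)}$ pointwise controlled by $\|\partial^3\psi\|_\infty\sum_{j,k}|D_zF_n^{(j)}||D_zF_n^{(k)}|$ together with a boundary contribution of the form $\|\partial_i\psi\|_\infty\,|D_z\mathbf{1}_{\{F_n^{(i)}>-\nu_i\}}|\,|D_zF_n^{(i)}|$.

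Collecting contributions and pairing with $-DL^{-1}F_n^{(i)}$: the diagonal $j=i$ piece, merged with $\EE[2(F_n^{(i)}+\nu_i)_+\partial_{ii}^2\psi({\bf F}_n)]$, is bounded by a constant multiple of $\alpha_n^{(i)}$ (after using AM--GM to dominate any mixed products $|D_zF_n^{(j)}D_zF_n^{(k)}|$ with $j\neq k$ by the corresponding diagonal squared terms). The off-diagonal $j\neq i$ linear contribution $\EE[\partial_{ij}^2\psi({\bf F}_n)\langle DF_n^{(j)},-DL^{-1}F_n^{(i)}\rangle]$ is directly dominated by $\|\partial_{ij}^2\psi\|_\infty\,\beta_n^{(i,j)}$, while the purely off-diagonal portion of $\rho_z^{(i)}$ produces terms bounded by $\|\partial^3\psi\|_\infty\,\gamma_n^{(j,i)}$. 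Summing over $i$, integrating in $t$, and invoking uniform $t$-integrable bounds on $\|\partial^k T_t\phi\|_\infty$, $k\leq 3$, in terms of the $C^3$-norm of $\phi$ and the parameters $\nu_i$ -- bounds that follow from the product structure of $T_t$ together with the one-dimensional estimates underlying \cite{Luk,Pickett} -- yields the claimed inequality, with $K_1$ and $K_2$ inherited from these derivative estimates.

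The main obstacle is the non-smoothness of $T_t\phi$ on the hyperplanes $\{x_i=-\nu_i\}$, the precise multivariate analogue of the phenomenon responsible for the extra term $A_3(F)$ in Theorem~\ref{thm:SteinBound}. It is treated coordinate-by-coordinate by the same three-case Taylor expansion, absorbing the resulting boundary corrections into the diagonal $\alpha_n^{(i)}$ for $j=i$, and -- should any genuinely cross boundary term arise -- dominating it by $\gamma_n^{(i,j)}$ via an elementary Cauchy--Schwarz / AM--GM argument. A subordinate but indispensable technical point is securing $t$-integrable uniform bounds on the first three derivatives of $T_t\phi$, which reduces via the product structure of $T_t$ to the one-dimensional Stein-solution estimates already used in the proof of Theorem~\ref{thm:SteinBound}.
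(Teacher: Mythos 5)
Your proposal takes a genuinely different route from the paper. The paper does not build a $d$-dimensional semigroup at all: it uses a discrete, Lindeberg-type telescoping, replacing one coordinate at a time ($\EE[\phi(F_n^{(1)},F_n^{(2)})]-\EE[\phi(G_1,F_n^{(2)})]$ plus $\EE[\phi(G_1,F_n^{(2)})]-\EE[\phi(G_1,G_2)]$ in the case $d=2$). The second difference is handled by conditioning on the independent $G_1$ and running the one-dimensional argument of Theorem \ref{thm:SteinBound}, producing $\alpha_n^{(2)}$. The first difference is rewritten, for each fixed value $y$ of the second coordinate, as a one-dimensional Stein equation in the first coordinate with test function $x\mapsto\phi(x,y)$; the resulting bivariate function $\hat h(x,y)=h_y(x)$ is given by the explicit piecewise formula (the analogue of \eqref{eq:SteinSolution}), its regularity in $x$ and $y$ is checked directly from that formula, and then the same integration-by-parts plus two-variable Taylor expansion $D_z\hat h = S_1+S_2$ that you describe yields $\alpha_n^{(1)}$, $\beta_n^{(2,1)}$ and $\gamma_n^{(2,1)}$. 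Your term-matching (diagonal pieces merging into $\alpha_n^{(i)}$ via the three-case expansion and AM--GM, off-diagonal linear pieces into $\beta_n^{(i,j)}$, off-diagonal remainders into $\gamma_n^{(j,i)}$) is exactly parallel to the paper's, so the back end of your argument is sound.

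The genuine gap is in your front end, namely the interpolation identity and the derivative bounds for $T_t\phi$. The product Laguerre semigroup is canonically defined only on the state space $\prod_i[-\nu_i,\infty)$, whereas the functionals $F_n^{(i)}$ may take values below $-\nu_i$ --- this is precisely why the indicator terms $D_z{\bf 1}_{\{F_n^{(i)}>-\nu_i\}}$ appear in $\alpha_n^{(i)}$ and why the whole paper fusses over the non-differentiability of the Stein solution at $-\nu$. To write $\EE[T_t\phi({\bf F}_n)]$, to justify $\frac{\dint}{\dint t}\EE[T_t\phi({\bf F}_n)]=\sum_i\EE[\mathcal{L}_i(T_t\phi)({\bf F}_n)]$ at such points, and to obtain uniform, $t$-integrable bounds on $\partial^k T_t\phi$ for $k\le 3$ \emph{across the hyperplanes} $\{x_i=-\nu_i\}$, you must first extend $T_t\phi$ to all of $\RR^d$ and redo the piecewise regularity analysis there; the one-dimensional estimates of Luk and Pickett that you invoke concern only the region $x>-\nu$ and do not supply this. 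The paper avoids the problem entirely, because its conditional Stein solution $\hat h$ is defined by an explicit closed formula on all of $\RR$ (the branch $-\tfrac1x(\phi(x,y)-\EE[\phi(G_1,y)])$ for $x\le-\nu_1$), so only one coordinate at a time ever needs boundary analysis and no multivariate semigroup or $t$-integrability issue arises. Your approach can very likely be repaired along these lines, but as written the key analytic inputs are asserted rather than established, and they are not consequences of the cited one-dimensional results.
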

\begin{proof}
The technique adopted here is similar to the one used in the proof of the main result of \cite{BourPec}. To keep the argument more transparent and the formulas simpler we restrict ourselves
to the case $d=2$, the general case can be dealt with similarly. So, ${\bf
F}_n=(F_n^{(1)},F_n^{(2)})$ and ${\bf \Gamma}=(G_1,G_2)$ and we have to
show that
\begin{equation}\label{eq:BoundMultivariateGeneral}
d\big((F_n^{(1)},F_n^{(2)}),(G_1,G_2)\big)\!\leq\!
K_1\big(\alpha_n^{(1)}+\alpha_n^{(2)}\big)\!+\!K_2\big(\beta_n^{(1,2)}\!+\!\beta_n^{(2,1)}\!+\!\gamma_n^{(1,2)}\!+\!\gamma_n^{(2,1)} \big).
\end{equation}
To accomplish this task, we shall provide uniform estimates on $|{\Bbb E}\phi(F_n^{(1)},F_n^{(2)})-{\Bbb
E}\phi(G_1,G_2)|$. First write
\begin{equation*}
\begin{split}
|{\Bbb E}\phi(F_n^{(1)},F_n^{(2)})-{\Bbb
E}\phi(G_1,G_2)|\leq|\EE[\phi(F_n^{(1)},F_n^{(2)})]&-\EE[\phi(G_1,F_n^{(2)})]|\\
&+|\EE[\phi(G_1,F_n^{(2)})]-\EE[\phi(G_1,G_2)]|=:|T_1|+|T_2|.
\end{split}
\end{equation*}
We first deal with $T_2$. Conditioning on $G_1$, we are in a one-dimensional situation and can
proceed as in the proof of Theorem \ref{thm:SteinBound}. This shows that
$T_2$ contributes the term $\alpha_n^{(2)}$ to the bound
\eqref{eq:BoundMultivariateGeneral}. We now consider the term $T_1$ and
write $\cL_U$ for the law of a random object $U$. Rewriting yields
$$T_1=\int\left(\phi(x,y)-\int\phi(g,y)\,\cL_{G_1}(\dint
g)\right)\cL_{(F_n^{(1)},F_n^{(2)})}\big(\dint(x,y)\big).$$ For fixed $y$ we
consider the term in brackets as the left-hand side of a Stein-equation
for the $\cGam_{\nu_1}$-distribution so that
\begin{equation}\label{eq:HelpSteinEquation}
\begin{split}
\int\left(\phi(x,y)-\int\phi(g,y)\,\cL_{G_1}(\dint
g)\right)&\cL_{(F_n^{(1)},F_n^{(2)})}\big(\dint(x,y)\big)\\ &=\int
2(x+\nu_1)_+h_y'(x)-xh_y(x)\,\cL_{(F_n^{(1)},F_n^{(2)})}\big(\dint(x,y)\big),
\end{split}
\end{equation}
where, for fixed $y$, $h_y(x)$ is the solution of the Stein-equation associated with the test function $x\mapsto \phi(x,y)$. We now consider the bivariate function $\hat h(x,y):=h_y(x)$. Using the smoothness assumptions on $\phi$ together with the explicit representation
\begin{equation*}\label{eq:ghat}
\hat h(x,y)=\begin{cases}-{1\over x}\big(\phi(x,y)-\EE[\phi(G(\nu_1),y)]\big) &:\,x\leq -\nu_1\\ {1\over 2(x+\nu_1)_+g_{\nu_1}(x)}\int_{-\nu_1}^x\big(\phi(z,y)-\EE[\phi(G(\nu_1),y)]\big)\,g_{\nu_1}(z)\,\dint z &: x>-\nu_1,\end{cases}
\end{equation*}
(recall the discussion preceding the proof of Theorem \ref{thm:SteinBound} and notice that $g_{\nu_1}(\,\cdot\,)$ stands for the density of the law $\cGam_{\nu_1}$) we deduce the following facts: (i) the mapping $x\mapsto \hat h(x,y)$ (for fixed $y$) is twice differentiable on $\RR\backslash\{-\nu\}$ (and it also admits right and left first derivatives at $x=-\nu$), and (ii) the mapping $y\mapsto \hat h(x,y)$ (for fixed $x$) is twice differentiable on $\RR$. All the involved derivatives are bounded by a finite constant only depending on $\nu_1$. Note that, in order to establish the estimates on $y\mapsto \hat h(x,y)$, one has to take derivatives under the integral and expectation signs, which is allowed thanks to the assumptions on $\phi$.

After these technical considerations we observe that
\eqref{eq:HelpSteinEquation} may be expressed in terms of $\hat h$ as
\begin{equation}\label{eq:HelpSteinEquationXXX}
\begin{split}
\int 2(x+\nu_1)_+\partial_1\hat h(x,y)&-x\hat
h(x,y)\,\cL_{(F_n^{(1)},F_n^{(2)})}\big(\dint(x,y)\big)\\
&=\EE\big[2(F_n^{(1)}+\nu_1)_+\partial_1\hat h(F_n^{(1)},F_n^{(2)})-F_1\hat h(F_n^{(1)},F_n^{(2)})\big]\\
&=\EE\big[2(F_n^{(1)}+\nu_1)_+\partial_1\hat g(F_n^{(1)},F_n^{(2)})-\langle D\hat
h(F_n^{(1)},F_n^{(2)}),-DL^{-1}F_n^{(1)}\rangle\big],
\end{split}
\end{equation}
where $\partial_1$ stands for the partial derivative with
respect to the first coordinate and where we have applied the
integration by parts formula \eqref{dualDdelta} of Malliavin calculus in exactly the same way as in the proof
of Theorem \ref{thm:SteinBound}. Using the notation
$F_{n,z}^{(i)}(\eta)=F_n^{(i)}(\eta+\delta_z)-F_n^{(i)}(\eta)$ for
$i\in\{1,2\}$ and $z\in\sZ$, we may write
\begin{equation*}
\begin{split}
D_z\hat h(F_1,F_2) &=\hat h(F_{n,z}^{(1)},F_{n,z}^{(2)})-\hat
h(F_{n}^{(1)},F_{n}^{(2)})\\
&=\big(\hat h(F_{n,z}^{(1)},F_{n,z}^{(2)})-\hat
h(F_{n,z}^{(1)},F_{n}^{(2)})\big)+\big(\hat
h(F_{n,z}^{(1)},F_{n}^{(2)})-\hat h(F_{n}^{(1)},F_{n}^{(2)})\big)=:S_1+S_2
\end{split}
\end{equation*}
Thanks to the properties of $\hat h$ described above, we find that $$S_1=\partial_2\hat
h(F_{n,z}^{(1)},F_n^{(2)})D_zF_n^{(2)}+R^{(1)}(D_zF_n^{(2)})\quad{\rm
and}\quad S_2=\partial_1\hat
h(F_n^{(1)},F_n^{(2)})D_zF_n^{(1)}+R^{(2)}(D_zF_n^{(1)}),$$ where
$R^{(1)}$ and $R^{(2)}$ are such that $$|R^{(1)}(D_zF_n^{(2)})|\leq K_1^{(1)}|D_zF_n^{(2)}|^2$$ and $$|R^{(2)}(D_zF_n^{(1)})|\leq K_1^{(2)}|D_zF_n^{(1)}|^2+K_2^{(2)}(D_z{\bf 1}_{\{F_n^{(1)}>-\nu_1\}})(D_zF_n^{(1)}),$$ where $\partial_{11}$ and $\partial_{22}$,
respectively, denote the second derivative with respect to the first and
second coordinate and where $K_1^{(1)},K_1^{(2)},K_2^{(2)}$ are finite constants. Combining this with \eqref{eq:HelpSteinEquationXXX} and
taking the supremum over all $\phi$, we obtain the contributions
$\alpha_n^{(1)}$, $\beta_n^{(2,1)}$ and $\gamma_n^{(2,1)}$ in
\eqref{eq:BoundMultivariateGeneral}. Inverting the role of $F_n^{(1)}$ and
$F_n^{(2)}$ in the previous discussion gives the bound
\eqref{eq:BoundMultivariateGeneral}, with constants $K_1$ and $K_2$ only depending on $(\nu_1,\nu_2)$.
\end{proof}
\begin{proof}[Proof of Theorem \ref{thm:MultivariateGamma}] Let us define the random vector ${\bf
I}_n:=\big(I_{q_1}(f_n^{(i)}),\ldots,I_{q_d}(f_n^{(d)})\big)$. We shall
prove that $d({\bf I}_n,{\bf \Gamma})\to 0$ as $n\to\infty$. Lemma
\ref{lem:MultivariateHelp} implies that for this it is sufficient to check
that $\alpha_n^{(i)}\to 0$, $\beta_n^{(i,j)}\to 0$ and that
$\gamma_n^{(i,j)}\to 0$ as $n\to\infty$ for any combination of
$i$ and $j$. Under the assumptions in the statement, writing $F_n^{(i)} = I_{q_i}(f_n^{(i)})$ one has the following three facts for every $i=1,\ldots,d$: (a) $\alpha_n^{(i)}\to 0$, as $n\to \infty$, (b) as $n\to\infty$,
$$
\EE\int_{\mathcal{Z}} (D_zF_n^{(i)} )^4 \,\mu_n(\dint z)\to 0,
$$
and (c) the sequence 
$$
\EE\int_{\mathcal{Z}} (D_zF_n^{(i)} )^2 \,\mu_n(\dint z) = 
q_i^2\,\EE\int_{\mathcal{Z}} (DL^{-1}F_n^{(i)} )^2 \,\mu_n(\dint z), \qquad n\geq 1,
$$
is bounded. An application of the Cauchy-Schwarz inequality yields therefore that $\gamma_n^{(i,j)}\to 0$ for any allowed choice of $i$ and $j$. To check the
fact that $\beta_n^{(i,j)}\to 0$, we apply once more the Cauchy-Schwarz
inequality to obtain $$\beta_n^{(i,j)}\leq
q_i^2\left(\EE\left(\int_{\sZ}I_{q_i-1}\big(f_n^{(i)}(z,\,\cdot\,)\big)\,I_{q_j-1}\big(f_n^{(j)}(z,\,\cdot\,)\big)\,\mu_n(\dint
z)\right)^2\right)^{1/2}.$$ We use now the general product formula \eqref{eq:ProductFormula} for multiple
integrals to express
$$I_{q_i-1}\big(f_n^{(i)}(z,\,\cdot\,)\big)\,I_{q_j-1}\big(f_n^{(j)}(z,\,\cdot\,)\big)$$ as a sum of
multiple integrals and the stochastic version of Fubini's theorem allowing
us to exchange deterministic with stochastic integration; see
\cite[Theorem 5.13.1]{PeccatiTaqquBook}. By assumption, $q_i<q_j$. Using the triangle inequality several times yields
\begin{equation*}
\begin{split}
\left( \EE\left(\int_{\sZ}I_{q_i-1}\big(f_n^{(i)}(z,\,\cdot\,)\big)\,
I_{q_j-1}\big(f_n^{(j)}(z,\,\cdot\,)\big)\,\mu_n(\dint z)\right)^2\right)^{1/2}
\leq \sum_{r=1}^{q_i}\sum_{\ell=1}^{r}K(r,\ell,q_i,q_j)^{1/2}\,\|
f_n^{(i)}\ts_{r}^{\ell} f_n^{(j)}\|,
\end{split}
\end{equation*}
with the constant $K(r,\ell,q_i,q_j)$ given by
$$K(r,\ell,q_i,q_j)=(r-1)!{q_i-1\choose
r-1}{q_j-1\choose r-1}{r-1\choose\ell-1}(q_i+q_j-r-\ell)!.$$ The proof is completed by observing that (see \cite[Lemma 2.9]{PecZheng})
$$\|
f_n^{(i)}\ts_{r}^{\ell} f_n^{(j)}\|\leq\|f_n^{(i)}\s_{r}^{\ell}
f_n^{(i)}\|\,\|f_n^{(j)}\s_{r}^{\ell} f_n^{(j)}\|\to 0$$
for all choices of $i,j$, because of the assumptions in the theorem and the fact that $2q_i\neq q_j$ for $i\neq j$.
\end{proof}

\subsection{Proof of Theorem \ref{thm:HybridConvergence}}

We start again with some preliminaries. Let
$F_n^{(1)},\ldots,F_n^{(d_1+d_2)}$ be square integrable functionals of the
Poisson measure $\eta$. For $i\in\{d_1+1,\ldots,d_1+d_2\}$ let us
define
\begin{equation*}
\d_n^{(i)}:=\EE|1-\langle
DF_n^{(i)},-DL^{-1}F_n^{(i)}\rangle|+\EE\int_{\sZ}|D_zF_n^{(i)}|^2|D_zL^{-1}F_n^{(i)}|\,\mu_n(\dint
z)
\end{equation*}
and for $i\in\{1,\ldots,d_1\}$ let $\alpha_n^{(i)}$ be as in
\eqref{eq:DefAlphaI} and for $i,j\in\{1,\ldots,d_1+d_2\}$ let
$\beta_n^{(i)}$ and $\gamma_n^{(i)}$ be as in \eqref{eq:DefBetaIJ}. We
will estimate the  distance between (the law of) ${\bf
F}_n:=\big(F_n^{(1)},\ldots,F_n^{(d_1+d_2)}\big)$ and (that of) the hybrid
vector ${\bf H}:=\big(G_1,\ldots,G_{d_1},N_{d_1+1},\ldots,N_{d_2}\big)$ by
the hybrid distance $d_h({\bf F}_n,{\bf H})=\sup|{\Bbb E}\phi({\bf
F}_n)-{\Bbb E}\phi({\bf H})|$, where the supremum runs over all functions
$\phi:{\Bbb R}^{d_1+d_2}\to{\Bbb R}$ whose partial derivatives up to order
$3$ are bounded, continuous and satisfy $\|\,\cdot\,\|_\infty\leq 1$.

\begin{lemma}\label{lem:HybridHelp}
There exist constants $K_1$, $K_2$ and $K_3$ such that $$d_h({\bf F}_n,{\bf
H})\leq
K_1\sum_{i=1}^{d_1}\alpha_n^{(i)}+K_2\sum_{i=d_1+1}^{d_1+d_2}\d_n^{(i)}+K_3\sum_{\stackrel{i,j=1}{i\neq
j}}^{d_1+d_2}\big(\beta_n^{(i,j)}+\gamma_n^{(i,j)}\big).$$
\end{lemma}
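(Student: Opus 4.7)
The plan is to follow verbatim the \emph{interpolation strategy} of the proof of Lemma \ref{lem:MultivariateHelp}, but allow the intermediate targets to be either Gamma or standard Normal. Concretely, fix a test function $\phi:\RR^{d_1+d_2}\to \RR$ whose partial derivatives up to order $3$ are continuous and uniformly bounded by $1$, and interpolate component by component via the telescoping sum
\begin{equation*}
\EE\phi({\bf F}_n)-\EE\phi({\bf H})=\sum_{i=1}^{d_1+d_2}\big(\EE\phi(H_1,\ldots,H_{i-1},F_n^{(i)},\ldots,F_n^{(d_1+d_2)})-\EE\phi(H_1,\ldots,H_{i},F_n^{(i+1)},\ldots,F_n^{(d_1+d_2)})\big),
\end{equation*}
where $H_i=G_i$ for $i\leq d_1$ and $H_i=N_i$ for $i>d_1$; the independence of the components of ${\bf H}$ is crucial here, since each increment depends only on the law of $H_i$ conditional on the remaining coordinates being fixed.

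For the $i$-th increment, condition on all variables other than the $i$-th and apply the corresponding one-dimensional Stein method. If $i\leq d_1$, one uses the Gamma Stein solution $\hat h$ associated with the test function $x\mapsto \phi(H_1,\ldots,H_{i-1},x,F_n^{(i+1)},\ldots,F_n^{(d_1+d_2)})$ exactly as in the proof of Lemma \ref{lem:MultivariateHelp}, whose bivariate version coming from viewing the conditioning variables as parameters is twice differentiable off the line $\{x=-\nu_i\}$ with bounded derivatives. If instead $i>d_1$, one uses the classical Stein solution of the Gaussian equation $h'(x)-xh(x)=\psi(x)-\EE\psi(N)$, which is globally $\cC^2$ with bounded derivatives; this is the standard setting of Peccati, Sol\'e, Taqqu and Utzet \cite{PSTU2010}.

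In either case, the integration by parts formula \eqref{dualDdelta} rewrites the increment as an expectation involving $\langle DF_n^{(i)},-DL^{-1}F_n^{(i)}\rangle$ minus a Taylor-type remainder arising from the fact that $D$ acts as a difference operator on the Poisson space. Taylor expanding the Stein solution in the $i$-th coordinate isolates: (a) the main diagonal term, which produces $\alpha_n^{(i)}$ (when $i\leq d_1$, including the indicator-type correction coming from the non-differentiability at $-\nu_i$, handled precisely as in the proof of Theorem \ref{thm:SteinBound}) or $\delta_n^{(i)}$ (when $i>d_1$); and (b) cross derivatives with respect to the coordinates $j\ne i$ that still carry a Poisson functional $F_n^{(j)}$. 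These cross contributions, after another application of the integration by parts formula applied on the variable $j$, are bounded by $\beta_n^{(i,j)}+\gamma_n^{(i,j)}$ via exactly the same argument that produces these two terms in \eqref{eq:BoundMultivariateGeneral}.

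Summing the contributions of the $d_1+d_2$ increments yields the claimed estimate, with constants $K_1,K_2,K_3$ depending only on $\nu_1,\ldots,\nu_{d_1}$ (the Gaussian components contribute universal numerical constants). The main technical obstacle is to control the regularity of the Stein solutions in the parametrized form $(x,y)\mapsto \hat h(x,y)$, where $y$ is a vector of parameters which are partly Gamma, partly Normal, partly Poisson functionals: here the assumption that $\phi$ has three bounded continuous derivatives is exactly what allows differentiation under the integral and expectation signs, in the same way as in the end of the proof of Lemma \ref{lem:MultivariateHelp}. Once this regularity is in hand, no genuinely new difficulty arises beyond the Gamma case already treated.
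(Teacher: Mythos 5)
Your sketch is correct and follows exactly the route the paper intends: the paper's own ``proof'' of Lemma \ref{lem:HybridHelp} is a one-line remark that it follows along the same lines as Lemma \ref{lem:MultivariateHelp}, and your component-by-component interpolation, with the Gamma Stein solution for $i\leq d_1$ and the globally $\cC^2$ Gaussian Stein solution (as in \cite{PSTU2010}) for $i>d_1$, is precisely that argument spelled out. The only minor imprecision is that the cross terms are bounded by $\beta_n^{(i,j)}+\gamma_n^{(i,j)}$ directly from the Taylor expansion of $D_z\hat h$ and the boundedness of the mixed derivatives, without a second integration by parts on the variable $j$.
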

\begin{proof}
This follows along the same lines of argumentation as the proof of Lemma
\ref{lem:MultivariateHelp}. For this reason the details are omitted.
\end{proof}

\begin{proof}[Proof of Theorem \ref{thm:HybridConvergence}]
We first use Lemma \ref{lem:GammaStep1} to see that because of
\eqref{eq:GammaContractionsConditionHybrid}, $\alpha_n^{(i)}\to 0$ for any
$i\in\{1,\ldots,d_1\}$. Next, we apply \cite[Theorem 5.1]{PSTU2010} to
infer that under \eqref{eq:NormalContractionsConditionHybrid},
$\d_i^{(n)}\to 0$ as $n\to\infty$ for any
$i\in\{d_1,+1,\ldots,d_1+d_2\}$. The remaining discussion of
$\beta_n^{(i,j)}$ and $\gamma_n^{(i,j)}$ is very similar to the
multivariate pure Gamma case so that $\beta_n^{(i,j)}\to 0$ and $\gamma_n^{(i,j)}\to
0$ for all $i\neq j\in\{1,\ldots,d_1+d_2\}$. In view of Lemma
\ref{lem:HybridHelp}, this completes the proof.
\end{proof}

\subsection{Proof of Theorem \ref{thm:HybridPGConvergence}}

We consider a measurable bounded test function $\phi :  \RR\times\mathbb{Z}_+ \to \RR$ such that $\phi$ has uniformly bounded derivatives up to the order three in the first variable. By a slight variation of the arguments leading to the proof of \cite[Theorem 2.1]{BourPec} one has that there exists a universal constant $K>0$ (independent of $n$) such that
$$
\big|\EE[\phi(I_q(f_n), H_n)] - \EE[\phi(G, P)]\big|\leq K\,(A_n+B_n+C_n+D_n),
$$
where (similar to $\alpha_n$ etc.\ above)
\begin{eqnarray*}
&& A_n :=\EE\left|2(F_n+\nu)_+-\langle DF_n,-DL^{-1}F_n\rangle\right|+\int_{\sZ}\EE[|D_zF_n|^2|D_zL^{-1}F_n |]\,\mu_n(\dint z),\\
&& B_n :=\big| \EE[H_n]- \lambda\big|+ \EE\left| \lambda - \langle DH_n, -DL^{-1}H_n\rangle\right|+ \int_{\mathcal{Z}} \EE\big| D_zH_n (D_zH_n-1) D_zL^{-1}H_n\big|\,\mu_n(\dint z),\\
&& C_n := \EE[\langle |DH_n|, |DI_q(f_n)|\rangle]
\end{eqnarray*}
and 
$$
D_n := \EE\int_{\sZ}(D_z{\bf 1}_{\{I_q(f_n)>-\nu\}})(D_zI_q(f_n))|D_zL^{-1}H_n|\,\mu_n(\dint z)
$$

In view of Theorem \ref{thm:GammaContractions} (as well as of the estimates leading to its proof), the assumptions in the statement imply that $A_n+B_n +D_n\to 0$, and, moreover, that
$$
\EE\int_{\mathcal{Z}} (D_zI_q(f_n))^4\,\mu_n(\dint z) \to 0\qquad{\rm as}\qquad n\to\infty.
$$
The conclusion is obtained by observing that, by virtue of H\"older's inequality, and since $DH_n$ takes values in $\mathbb{Z}$,
\begin{eqnarray*}
C_n &\leq& \left( \EE\int_{\mathcal{Z}} (D_zI_q(f_n))^4\,\mu_n(\dint z)\right)^{1/4} \times \left(\EE \int_{\mathcal{Z}} (D_zH_n)^{4/3}\,\mu_n(\dint z)\right)^{3/4}\\
&\leq &\left(\EE \int_{\mathcal{Z}} (D_zI_q(f_n))^4\,\mu_n(\dint z)\right)^{1/4} \times \left(\EE \int_{\mathcal{Z}} (D_zH_n)^{2}\,\mu_n(\dint z)\right)^{3/4}\to 0,
\end{eqnarray*}
where we have implicitly used assumption (\ref{e:non}).\hfill $\Box$

\subsection*{Acknowledgement} We are grateful to Matthias Schulte and Yvik Swan for useful discussions.

\end{document}